\theoremstyle{plain}
\newtheorem{theorem}{Theorem}[section]
\newtheorem{proposition}[theorem]{Proposition}
\newtheorem{lemma}[theorem]{Lemma}
\newtheorem{corollary}[theorem]{Corollary}
\newtheorem{remark}{Remark}[section]
\newtheorem{example}{Example}[section]
\newtheorem{definition}{Definition}[section]
\newcommand{\Div}{\mathop{\mathrm{div}}\nolimits}
\newcommand{\Ker}{\mathop{\mathrm{Ker}}\nolimits}
\newcommand{\Cross}{\mathop{\raisebox{-.17pc}{\large$\mathsf{X}$}}}
\newcommand{\ssCross}{\mathop{\raisebox{-.16pc}{\footnotesize $\mathsf{X}$}}}
\newcommand{\supp}{\mathop{\mathrm{supp}}\nolimits}
\newcommand{\Diff}{\mathrm{Diff}}
\newcommand{\Vect}{\mathrm{Vect}}
\newcommand{\const}{\mathrm{const}}
\newcommand{\rd}{\mathrm{d}}
\newcommand{\re}{\mathrm{e\myp}}
\newcommand{\CD}{\mathop{\raisebox{-.11pc}{\Large$\cdot$}}}
\newcommand{\RR}{\mathbb{R}}
\newcommand{\NN}{\mathbb{N}}
\newcommand{\ZZ}{\mathbb{Z}}
\newcommand{\calB}{\mathcal{B}}
\newcommand{\calN}{\mathcal{N}}
\newcommand{\frakX}{\mathfrak{X}}
\newcommand{\mucl}{\mu_{\mathrm{cl}}}
\newcommand{\myp}{\mbox{$\:\!$}}
\newcommand{\mypp}{\mbox{$\;\!$}}
\newcommand{\myn}{\mbox{$\;\!\!$}}
\newcommand{\mynn}{\mbox{$\:\!\!$}}
\makeatletter \@addtoreset {equation}{section}
\begin{document}

\begin{frontmatter}

\title{\normalfont\LARGE
Poisson cluster measures: quasi-invariance, integration by parts and
equilibrium stochastic dynamics}

\runtitle{Poisson cluster measures on configuration spaces}


\author[Leeds]{Leonid Bogachev}, \,
\ead{L.V.Bogachev@leeds.ac.uk}
\author[York]{Alexei Daletskii}
\ead{ad557@york.ac.uk}

\address[Leeds]{Department of Statistics, University of Leeds, Leeds LS2 9JT, UK}
\address[York]{Department of Mathematics, University of York, York YO10 5DD, UK}

\runauthor{L.\ Bogachev, A.\ Daletskii/Journal of Functional
Analysis}


\begin{abstract}
The distribution $\mucl$ of a Poisson cluster process in
$X=\mathbb{R}^{d}$ (with i.i.d.\ clusters) is studied via an
auxiliary Poisson measure on the space of configurations in
$\frakX=\bigsqcup_{n}\myn X^n$, with intensity measure defined as a
convolution of the background intensity of cluster centres and the
probability distribution of a generic cluster. We show that the
measure $\mucl$ is quasi-invariant with respect to the group of
compactly supported diffeomorphisms of $X$ and prove an
integration-by-parts formula for $\mucl$. The corresponding
equilibrium stochastic dynamics is then constructed using the method
of Dirichlet forms.
\end{abstract}

\begin{keyword}
\def\MSC{\par\leavevmode\hbox {\it 2000 MSC:\ }}%
cluster point process; Poisson measure; configuration space;
quasi-invariance; integration by parts; Dirichlet form; stochastic
dynamics

\medskip \MSC Primary 58J65; Secondary 31C25, 46G12, 60G55, 70F45
\end{keyword}

\end{frontmatter}

\section{Introduction}

In the mathematical modelling of multi-component stochastic systems,
it is conventional to describe their behaviour in terms of random
configurations of ``particles'' whose spatio-temporal dynamics is
driven by interaction of particles with each other and the
environment. Examples are ubiquitous and include various models in
statistical mechanics, quantum physics, astrophysics, chemical
physics, biology, computer science, economics, finance, etc. (see
\cite{DVJ1} and the extensive bibliography therein).

Initiated in statistical physics and theory of point processes, the
development of a general mathematical framework for suitable classes
of configurations was over decades a recurrent research theme
fostered by widespread applications. More recently, there has been a
boost of more specific interest in the \emph{analysis} and
\emph{geometry} of configuration spaces. In the seminal papers
\cite{AKR1,AKR2}, an approach was proposed to configuration spaces
as \emph{infinite-dimensional manifolds}. This is far from
straightforward, since configuration spaces are not vector spaces
and do not possess any natural structure of Hilbert or Banach
manifolds. However, many ``manifold-like'' structures can be
introduced, which appear to be nontrivial even in the Euclidean
case. We refer the reader to papers
\cite{ADKal,AKR2,AKR3,Liebscher,Ro} and references therein for
further discussion of various aspects of analysis on configuration
spaces and applications.

Historically, the approach in \cite{AKR1,AKR2} was motivated by the
theory of representations of diffeomorphism groups (see
\cite{GGPS,Ism,VGG}). To introduce some notation, let $\varGamma_X$
be the space of countable subsets (\emph{configurations}) without
accumulation points in a topological space $X$ (e.g., Euclidean
space $\mathbb{R}^d$). Any probability measure $\mu$ on
$\varGamma_{X}$, quasi-invariant with respect to the action of the
group $\Diff_{0}(X)$ of compactly supported diffeomorphisms of $X$
(lifted pointwise to transformations of $\varGamma_{X}$), generates
a canonical unitary representation of $\Diff_{0}(X)$ in
$L^{2}(\varGamma_{X},\mu )$. It has been proved in \cite{VGG} that
this representation is irreducible if and only if $\mu $ is
$\Diff_{0}(X)$-ergodic. Representations of such type are
instrumental in the general theory of representations of
diffeomorphism groups \cite{VGG} and in quantum field theory
\cite{GGPS,Goldin}.

According to a general paradigm described in \cite{AKR1,AKR2},
configuration space analysis is determined by the choice of a
suitable probability measure $\mu$ on $\varGamma_{X}$
(quasi-invariant with respect to $\Diff_{0}(X)$). It can be shown
that such a measure $\mu $ satisfies a certain integration-by-parts
formula, which enables one to construct, via the theory of Dirichlet
forms, the associated equilibrium dynamics (stochastic process) on
$\varGamma_{X}$ such that $\mu$ is its invariant measure
\myp\cite{AKR1,AKR2,MR}. In turn, the equilibrium process plays an
important role in the asymptotic analysis of statistical-mechanical
systems whose spatial distribution is controlled by the measure
$\mu$; for instance, this process is a natural candidate for being
an asymptotic ``attractor'' for motions started from a perturbed
(non-equilibrium) configuration.

This programme has been successfully implemented in \cite{AKR1} for
the Poisson measure, which is the simplest and most well-studied
example of a $\Diff_{0}(X)$-quasi-invariant measure on
$\varGamma_{X}$, and in \cite{AKR2} for a wider class of Gibbs
measures, which appear in statistical mechanics of classical
continuous gases. In particular, it has been shown that in the
Poisson case, the equilibrium dynamics amounts to the well-known
independent particle process, that is, an infinite family of
independent (distorted) Brownian motions started at the points of a
random Poisson configuration. In the Gibbsian case, the dynamics is
much more complex due to interaction between the particles.

The Gibbsian class (containing the Poisson measure as a simple
``interaction-free'' case) is essentially the sole example so far
that has been fully amenable to such analysis. In the present paper,
our aim is to develop a similar framework for a different class of
random spatial structures, namely the well-known \emph{cluster point
processes} (see, e.g., \cite{CI,DVJ1}). Cluster process is a simple
model to describe effects of grouping (``clustering'') in a sample
configuration. The intuitive idea is to assume that the random
configuration has a hierarchical structure, whereby independent
clusters of points are distributed around a certain (random)
configuration of invisible ``centres''. The simplest model of such a
kind is the \emph{Poisson cluster process}, obtained by choosing a
Poisson point process as the background configuration of the cluster
centres.

Cluster models have been very popular in numerous practical
applications ranging from neurophysiology (nerve impulses) and
ecology (spatial distribution of offspring around the parents) to
seismology (statistics of earthquakes) and cosmology (formation of
constellations and galaxies). More recent examples include
applications to trapping models of diffusion-limited reactions in
chemical kinetics \cite{AB,BMBM,BMMB}, where clusterization may
arise due to binding of traps to a substrate (e.g., a polymer chain)
or trap generation (e.g., by radiation damage). An exciting range of
new applications in physics and biology is related to the dynamics
of clusters consisting of a few to hundreds of atoms or molecules.
Investigation of such ``mesoscopic'' structures, intermediate
between bulk matter and individual atoms or molecules, is of
paramount importance in the modern nanoscience and nanotechnology
(for an authoritative account of the state of the art in this area,
see a recent review \cite{CBZ} and further references therein).

In the present work, we consider Poisson cluster processes in
$X=\mathbb{R}^{d}$. We prove the $\Diff_{0}(X)$-quasi-invariance of
the Poisson cluster measure $\mucl$ and establish the
integration-by-parts formula. We then construct an associated
Dirichlet form, which implies in a standard way the existence of
equilibrium stochastic dynamics on the configuration space
$\varGamma_{X}$. Our technique is based on the representation of
$\mucl$ as a natural ``projection'' image of a certain Poisson
measure on an auxiliary configuration space $\varGamma_{\frakX}$
over a disjoint union $\frakX=\bigsqcup_{n}\myn X^{n}$, comprising
configurations of ``droplets'' representing individual clusters of
variable size. A suitable intensity measure on $\frakX$ is obtained
as a convolution of the background intensity $\lambda(\rd x)$ (of
cluster centres) with the probability distribution
$\eta(\rd\bar{y})$ of a generic cluster. This approach enables one
to apply the well-developed apparatus of Poisson measures to the
study of the Poisson cluster measure $\mucl$.

Let us point out that the projection construction of the Poisson
cluster measure is very general, and in particular it works even in
the case when ``generalized'' configurations (with possible
accumulation or multiple points) are allowed. However, to be able to
construct a well-defined differentiable structure on cluster
configurations, we need to restrict ourselves to the space
$\varGamma_X$ of ``proper'' (i.e., locally finite and simple)
configurations. Using the technique of Laplace functionals, we
obtain necessary and sufficient conditions of almost sure (a.s.)
properness for Poisson cluster configurations, set out in terms of
the background intensity $\lambda(\rd x)$ of cluster centres and the
in-cluster distribution $\eta(\rd\bar{y})$. To the best of our
knowledge, these conditions appear to be new (cf., e.g.,
\cite[\S\,6.3]{DVJ1})
and may be of interest for the general theory of cluster point
processes.

Some of the results of this paper have been sketched in \cite{BD1}
(in the case of clusters of fixed size). We anticipate that the
projection approach developed in the present paper can be applied to
the study of more general cluster measures on configurations spaces,
especially Gibbs cluster measure (see \cite{BD2} for the case of
fixed-size clusters). Such models, and related functional-analytic
issues, will be addressed in our future work.

The paper is organized as follows. In Section \ref{sec:General}, we
set out a general framework of probability measures in the space of
generalized configurations $\varGamma^\sharp_X$. In Section
\ref{sec:Poisson}, we recall the definition and discuss the
construction and some basic properties of the Poisson measure on the
space $\varGamma^\sharp_X$, while Section \ref{sec:Poisson-cluster}
goes on to describe the Poisson cluster measure. In Section
\ref{sec:proper}, we discuss criteria for Poisson cluster
configurations to be a.s.\ locally finite and simple
(Theorem~\ref{th:properClusterPoisson}, the proof of which is
deferred to the Appendix). An auxiliary intensity measure
$\lambda^\star$ on the space $\mathfrak{X}=\bigsqcup_n\mynn X^n$ is
introduced and discussed in Section~\ref{n-clusters}, which allows
us to define the corresponding Poisson measure $\pi_{\lambda^\star}$
on the configuration space $\varGamma_\frakX^\sharp$
(Section~\ref{sec:3.2}). Theorem~\ref{th:mucl} of Section
\ref{sec:PCP} shows that the Poisson cluster measure $\mucl$ can be
obtained as a push-forward of the Poisson measure
$\pi_{\lambda^\star}$ on $\varGamma_{\mathfrak{X}}^\sharp$ under the
``unpacking'' map $\frakX\ni \bar{x}\mapsto
\mathfrak{p}(\bar{x}):=\bigsqcup_{x_i\in\bar{x}}\{x_i\}\in\varGamma_\frakX^\sharp$.
In Section \ref{sec:3.3}, we describe a more general construction of
$\mucl$ using another Poisson measure defined on the space
$\varGamma^\sharp_{X\times \frakX}$ of configurations of pairs
$(x,\bar{y})$ ($x=\text{cluster centre}$, $\bar{y}=\text{in-cluster
configuration}$), with the product intensity measure $\lambda(\rd
x)\otimes\eta(\rd \bar{y})$. Following a brief compendium on
differentiable functions in configuration spaces
(Section~\ref{app2}), Section \ref{sec:QI-mu} deals with the
property of quasi-invariance of the measure $\mucl$ with respect to
the diffeomorphism group $\Diff_0(X)$ (Theorem~\ref{q-inv}). Further
on, an integration-by-parts formula for $\mucl$ is established in
Section~\ref{sec:4.2} (Theorem~\ref{IBP-}). The Dirichlet form
$\mathcal{E}_{\mucl}$ associated with $\mucl$ is defined and studied
in Section \ref{sec:Dir-mu}, which enables us to construct in
Section~\ref{sec:equil} the canonical equilibrium dynamics (i.e.,
diffusion on the configuration space with invariant measure
$\mucl$). In addition, we show that the form $\mathcal{E}_{\mucl}$
is irreducible (Theorem~\ref{th:irr}, Section~\ref{sec:irreduc}).
Finally, the Appendix includes the proof of
Theorem~\ref{th:properClusterPoisson} (Section~\ref{app1}) and the
proof of a well-known general result on quasi-invariance of Poisson
measures, adapted to our purposes (Section~\ref{app3}).

\section{Poisson and Poisson cluster measures in configuration
spaces}\label{sec:PP}

In this section, we fix some notations and describe the setting of
configuration spaces that we shall use. As compared to a standard
exposition (see, e.g., \cite{CI,DVJ1}), we adopt a more general
standpoint by allowing configurations with multiple points and/or
accumulation points. With this modification in mind, we recall the
definition and some properties of Poisson point process (as a
probability measure in the generalized configuration space
$\varGamma_{X}^\sharp$). We then proceed to introduce the main
object of the paper, the cluster Poisson point process and the
corresponding measure $\mucl$ in $\varGamma_{X}^\sharp$. The central
result of this section is the projection constriction showing that
$\mucl$ can be obtained as a push-forward of a suitable Poisson
measure in the auxiliary ``vector'' configuration space
$\varGamma_{\frakX}^\sharp$, where $\frakX=\bigsqcup_n X^n$.

\subsection{Generalized configurations}\label{sec:General}

Let $X$ be a Polish space (i.e., separable completely metrizable
topological space), equipped with the Borel $\sigma$-algebra
$\mathcal{B}(X)$ generated by the open sets. Denote
$\overline{\ZZ}_+:=\ZZ_+\cup \{\infty\}$, where
$\ZZ_+=\{0,1,2,\dots\}$, and consider the space $\frakX$ built from
Cartesian powers of $X$, that is, a disjoint union
$\frakX:=\bigsqcup_{n\in \overline{\ZZ}_+}\myn X^{n}$ including
$X^0=\{\emptyset\}$ and the space $X^\infty$ of infinite sequences
$(x_1,x_2,\dots)$. That is to say, $\bar x=(x_1,x_2,\dots)\in\frakX$
if and only if $\bar{x}\in X^n$ for some $n\in\overline{\ZZ}_+$. For
simplicity of notation, we take the liberty to write $x_i\in\bar x$
if $x_i$ is a coordinate of the vector $\bar x$.

Each space $X^n$ is equipped with the product topology induced by
$X$, that is, the coarsest topology in which all coordinate
projections $(x_1,\dots,x_n)\mapsto x_i$ are continuous
($i=1,\dots,n$). Hence, the space $\frakX$ is endowed with the
natural disjoint union topology, that is, the finest topology in
which the canonical injections \,$\mathbf{j}_{\myp n}:X^{n}\to
\frakX$ are continuous ($n\in\overline{\ZZ}_+$). In other words, a
set ${U}\subset \frakX$ is open in this topology whenever
${U}=\bigsqcup_{n\in\overline{\ZZ}_+}\myn U_n$, where each $U_n$ is
an open subset in $X^n$ ($n\in\overline{\ZZ}_+$). Hence, the Borel
$\sigma$-algebra on $\frakX$ is given by
$\calB(\frakX)=\bigoplus_{n\in\overline{\ZZ}_+} \calB(X^n)$, that
is, consists of sets of the form
${B}=\bigsqcup_{n\in\overline{\ZZ}_+}\myn B_n$, where
\,$B_n\in\calB(X^n)$, \,$n\in\overline{\ZZ}_+$\myp.

\begin{remark}\label{rm:compact}
\normalfont Note that a set $K\subset\frakX$ is compact if and only
if $K=\bigsqcup_{n=0}^N\myn K_n$, where $N<\infty$ and $K_n$ are
compact subsets of $X^n$, respectively. This becomes clear by
considering an open cover of $K$ by the sets ${U}_n=X^n$,
\,$n\in\overline{\ZZ}_+$.
\end{remark}

Denote by ${\calN}(X)$ the space of $\overline{\ZZ}_+$-valued
measures $N(\cdot)$ on $\mathcal{B}(X)$ with countable (i.e., finite
or countably infinite) support $\supp N:=\{x\in X: N\{x\}>0\}$ (here
and below, we use $N\{x\}$ as a shorthand for a more accurate
$N(\{x\})$; the same convention applies to other measures). Consider
the natural projection
\begin{equation}\label{eq:pr0}
\frakX\ni \bar{x} \mapsto
\mathfrak{p}(\bar{x}):=\sum_{x_i\in\bar{x}}\delta_{x_i}\in
\mathcal{N}(X),
\end{equation}
where $\delta_x$ is Dirac measure at point $x\in X$. Gathering any
coinciding points $x_i\in\bar{x}$, the measure
$N=\sum_{x_i\in\bar{x}}\delta_{x_i}$ in (\ref{eq:pr0}) can be
written down as $N=\sum_{x_i^*\in\supp N} k_i\myp\delta_{x_i^*}$,
where $k_i=N\{x_i^*\}>0$ is the ``multiplicity'' (possibly infinite)
of the point $x_i^*\in\supp N$. Any such measure $N$ can be
conveniently associated with a \emph{generalized configuration}
$\gamma$ of points in $X$,
\begin{equation*}
N\leftrightarrow \gamma:= \bigsqcup_{x_i^*\in\supp N}
\underbrace{\{x_i^*\}\sqcup\cdots\sqcup \{x_i^*\}}_{k_i}\,,
\end{equation*}
where the disjoint union $\{x^*\}\sqcup\cdots\sqcup \{x^*\}$
signifies the inclusion of several distinct copies of point
$x^*\in\supp N$. Thus, the mapping (\ref{eq:pr0}) can be
symbolically rewritten as
\begin{equation}\label{eq:pr}
{\mathfrak p}(\bar{x})=\gamma:=\bigsqcup_{x_i\in \bar{x}}
\{x_i\},\qquad \bar{x}=(x_1,x_2,\dots)\in\frakX.
\end{equation}
That is to say, under the projection mapping $\mathfrak{p}$ each
vector from $\frakX$ is ``unpacked'' into distinct components,
resulting in a countable aggregate of points in $X$ (with possible
multiple points), which we interpret as a generalized configuration
$\gamma$. Note that, formally, $\bar{x}$ may be from the ``trivial''
component $X^0=\{\emptyset\}$, in which case the union in
(\ref{eq:pr}) (as well as the sum in~(\ref{eq:pr0})) is vacuous and
hence corresponds to the empty configuration, $\gamma=\emptyset$.

Even though generalized configurations are not, strictly speaking,
subsets of $X$ (due to possible multiple points), it is convenient
to keep using set-theoretic notations, which should not cause any
confusion. For instance, we write $\gamma_B:=\gamma\cap B$ for the
restriction of configuration $\gamma$ to a subset
$B\in\mathcal{B}(X)$. Similarly, for a function $f:X\to\RR$ we
denote
\begin{equation}\label{eq:f-gamma}
\langle f,\gamma \rangle :=\sum_{x_i\in \gamma }f(x_i)\equiv
\sum_{x_i^*\in\supp N} N\{x_i^*\}\,f(x_i^*) =\int_{X}f(x)\, N(\rd
x).
\end{equation}
This formula motivates the following convention that will be used
throughout: if $\gamma=\emptyset$ then $\sum_{x\in\gamma} f(x):= 0$.

In what follows, we shall identify generalized configurations
$\gamma$ with the corresponding measures
$N=\sum_{x_i\in\gamma}\myn\delta_{x_i}$, and we shall opt to
interpret the notation $\gamma$ either as an aggregate of (multiple)
points in $X$ or as a $\overline{\ZZ}_+$-valued measure or both,
depending on the context. For example, if ${\bf 1}_B(x)$ is the
indicator function of a set $B\in\mathcal{B}(X)$ then $\langle{\bf
1}_B,\gamma\rangle=\gamma(B)$ is the total number of points (counted
with their multiplicities) in the restriction $\gamma_B$ of the
configuration $\gamma$ to~$B$.

\begin{definition}\label{def:gen}
\normalfont \emph{Configuration space} $\varGamma_{X}^\sharp$ is the
set of generalized configurations $\gamma$ in $X$, endowed with the
\emph{cylinder $\sigma$-algebra} $\calB(\varGamma_{X}^\sharp)$
generated by the class of cylinder sets $C_{B}^{\myp
n}:=\{\gamma\in\varGamma_{X}^\sharp: \gamma(B)=n\}$,
\,$B\in\calB(X)$, \,$n\in\ZZ_+$\myp.
\end{definition}
\begin{remark}\normalfont
Note that the set $C_B^{\infty}=\{\gamma\in\varGamma_X^\sharp:
\gamma(B)=\infty\}$ is measurable:
$$
C_B^\infty=\bigcap_{n=0}^\infty \{\gamma\in \varGamma_{X}^\sharp:
\gamma(B)\ge n\}=\bigcap_{n=0}^\infty\bigcup_{k=n}^\infty C_B^{\myp
k}\in\mathcal{B}(\varGamma_{X}^\sharp).
$$
\end{remark}

The mapping $\mathfrak{p}:\frakX\to\varGamma_X^\sharp$ defined by
formula (\ref{eq:pr}) is measurable, since for any cylinder set
$C_{B}^{\myp n}\in\calB(\varGamma_{X}^\sharp)$ we have
\begin{equation}\label{eq:DD}
\mathfrak{p}^{-1}(C_{B}^{\myp n})= D_{B}^{\myp
n}:=\left\{\bar{x}\in\frakX: \,\sum_{x_i\in\bar{x}}\myn{\bf
1}_B(x_i)=n\right\}\in\mathcal{B}(\frakX).
\end{equation}

As already mentioned, conventional theory of point processes (and
their distributions as probability measures on configuration spaces)
usually rules out the possibility of accumulation points or multiple
points (see, e.g., \cite{DVJ1}).
\begin{definition}\label{def:proper}
\normalfont Configuration $\gamma\in\varGamma _{X}^\sharp$ is said
to be \emph{locally finite} if $\gamma(K)<\infty$ for any compact
set $K\subset X$. Configuration $\gamma\in\varGamma _{X}^\sharp$ is
called \emph{simple} if $\gamma\{x\}\le 1$ for each $x\in X$.
Configuration $\gamma\in\varGamma _{X}^\sharp$ is called
\emph{proper} if it is both locally finite and simple. The set of
proper configurations will be denoted by $\varGamma_{X}$ and called
the \emph{proper configuration space} over $X$. The corresponding
$\sigma$-algebra $\calB(\varGamma_{X})$ is generated by the cylinder
sets $\{\gamma\in\varGamma_{X}: \gamma(B)=n\}$ \,($B\in\calB(X)$,
\,$n\in\ZZ_+$).
\end{definition}

Like in the standard theory for proper configuration spaces (see,
e.g., \cite[\S\,6.1]{DVJ1}), every measure $\mu$ on the generalized
configuration space $\varGamma_{X}^\sharp$ can be characterized by
its Laplace functional
\begin{equation}\label{eq:LAPLACE}
L_{\mu}[f]:=\int_{\varGamma_{X}^\sharp}\re^{-\langle f,\gamma\rangle
}\,\mu(\rd\gamma ),\qquad f\in\mathrm{M}_+(X),
\end{equation}
where $\mathrm{M}_+(X)$ is the set of measurable non-negative
functions on $X$ (so that the integral in (\ref{eq:LAPLACE}) is well
defined since $0\le \re^{-\langle f,\gamma\rangle }\le 1$). To see
why $L_{\mu}[\cdot]$ completely determines the measure $\mu$ on
$\calB(\varGamma_X^\sharp)$, note that if $B\in\mathcal{B}(X)$ then
$L_\mu[s{\bf 1}_{B}]$ as a function of $s>0$ gives the
Laplace--Stieltjes transform of the distribution of the random
variable $\gamma(B)$ and as such determines the values of the
measure $\mu$ on the cylinder sets $C_B^{\myp
n}\in\calB(\varGamma_X^\sharp)$ ($n\in\ZZ_+$). In particular,
$L_\mu[s{\bf 1}_{B}]=0$ if and only if $\gamma(B)=\infty$
($\mu$-a.s.). Similarly, using linear combinations $\sum_{i=1}^k
s_i{\bf 1}_{B_i}$ we can recover the values of $\mu$ on the cylinder
sets
$$
C_{B_1,\dots,B_k}^{\myp n_1,\dots,n_k}:=\bigcap_{i=1}^k
C_{B_i}^{\myp n_i}=\{\gamma\in \varGamma_{X}^\sharp:
\gamma(B_i)=n_i,\ i=1,\dots,k\}
$$
and hence on the ring $\mathcal{C}(X)$ of finite disjoint unions of
such sets. Since the ring $\mathcal{C}(X)$ generates the cylinder
$\sigma$-algebra $\calB(\varGamma_X^\sharp)$, the extension theorem
(see, e.g., \cite[\S\,13, Theorem~A]{Halmos} or
\cite[Theorem~A1.3.III]{DVJ1}) ensures that the measure $\mu$ on
$\calB(\varGamma_X^\sharp)$ is determined uniquely.

\subsection{Poisson measure}\label{sec:Poisson}
We recall here some basic facts about Poisson measures in
configuration spaces. As compared to the customary treatment,
another difference, apart from working in the space of generalized
configurations $\varGamma_X^\sharp$, is that we use a
$\sigma$-\emph{finite} intensity measure rather than a \emph{locally
finite} one.

Poisson measure on the configuration space $\varGamma_X^\sharp$ is
defined descriptively as follows (cf.\ \cite[\S\,2.4]{DVJ1}).
\begin{definition}\label{def:Poisson}
\normalfont Let $\lambda$ be a $\sigma$-finite measure in
$(X,\calB(X))$ (not necessarily infinite, i.e.,
$\lambda(X)\le\infty$). The \emph{Poisson measure $\pi_\lambda$ with
intensity $\lambda$} is a probability measure on
$\calB(\varGamma_{X}^\sharp)$ satisfying the following condition:
for any disjoint sets $B_1,\dots,B_k\in\calB(X)$ (i.e., $B_i\cap
B_j=\emptyset$ for $i\ne j$), such that $\lambda(B_i)<\infty$
($i=1,\dots,k$), and any $n_1,\dots,n_k\in\ZZ_+$, the value of
$\pi_\lambda$ on the cylinder set $ C_{B_1,\dots,B_k}^{\myp
n_1,\dots,n_k}$ is given by
\begin{equation}\label{eq:PoissonFD}
\pi_\lambda\bigl(C_{B_1,\dots,B_k}^{\myp
n_1,\dots,n_k}\bigr)=\prod_{i=1}^k \frac{\lambda(B_i)^{n_i}
\,\re^{-\lambda(B_i)}}{n_i!}
\end{equation}
(with the convention $0^0:=1$). That is, for disjoint sets $B_i$ the
values $\gamma(B_i)$ are mutually independent Poisson random
variables with parameters $\lambda(B_i)$, respectively.
\end{definition}

A well-known ``explicit'' construction of the Poisson measure
$\pi_\lambda$ is as follows (cf.\ \cite{AKR1,Sh}). For a fixed set
$\varLambda\in\calB(X)$ such that $\lambda(\varLambda)<\infty$,
consider the restriction mapping $p_\varLambda$\myp,
$$
\varGamma_X^\sharp\ni \gamma\mapsto p_\varLambda
\gamma=\gamma\cap\varLambda\equiv
\gamma_\varLambda\in\varGamma_\varLambda^\sharp.
$$
Clearly, $p_\varLambda(C_\varLambda^{\myp
n})=\{\tilde\gamma\in\varGamma_\varLambda^\sharp:
\tilde\gamma(\varLambda)=n\}$. For
$A\in\calB(\varGamma_{\varLambda}^\sharp)$ and $n\in\ZZ_+$, let
$A_{\varLambda,\myp n}:=A\cap p_\varLambda(C_{\varLambda}^{\myp
n})\in\calB(\varGamma_{\varLambda}^\sharp)$ and define the measure
\begin{equation}\label{eq:P-Lambda}
\pi_\lambda^\varLambda(A):=\re^{-\lambda(\varLambda)}\sum_{n=0}^\infty
\frac{1}{n!}\,\lambda^{\otimes\myp n}\mynn\circ \mathfrak{p}^{-1}
(A_{\varLambda,\myp n}), \qquad A\in
\calB(\varGamma_{\varLambda}^\sharp),
\end{equation}
where $\lambda^{\otimes\myp
n}=\underbrace{\,\lambda\otimes\dots\otimes\lambda}_{n}$ \, is the
product measure in $(X^n,\calB(X^n))$ (we formally set
$\lambda^{\otimes\myp 0}\!:=\delta_{\{\emptyset\}}$) and ${\mathfrak
p}$ is the projection operator defined in (\ref{eq:pr}). In
particular, (\ref{eq:P-Lambda}) implies that
$\pi_\lambda^\varLambda$ is a probability measure on
$\varGamma_{\varLambda}^\sharp$. It is easy to check that the
``cylindrical'' measure $\pi_\lambda^\varLambda\circ p_\varLambda$
in $\varGamma_X^\sharp$ (in fact, supported on
$\bigcup_{n=0}^{\infty}\myn C_\varLambda^{\myp n}$) satisfies
equation (\ref{eq:PoissonFD}) for any disjoint Borel sets
$B_i\subset \varLambda$. It is also clear that the family
$\{\pi_\lambda^{\varLambda},\,\varLambda\subset X\}$ is consistent,
that is, the restriction of the measure $\pi_\lambda^{\varLambda}$
to a smaller configuration space $\varGamma_{\varLambda'}^\sharp$
(with $\varLambda'\subset\varLambda$) coincides with
$\pi_\lambda^{\varLambda'}$, that is, $\pi_\lambda^{\varLambda}\circ
(p_\varLambda p_{\varLambda'}^{-1})=\pi_\lambda^{\varLambda'}$.

Existence (and uniqueness) of a measure $\pi_\lambda$ in
$(\varGamma_X^\sharp,\calB(\varGamma_X^\sharp))$ such that, for any
$\varLambda\in\calB(X)$, the push-forward measure
$p_\varLambda^*\myp\pi_\lambda\equiv\pi_\lambda\circ
p_\varLambda^{-1}$ coincides with $\pi_\lambda^\varLambda$ (which
implies that $\pi_\lambda$ satisfies Definition \ref{def:Poisson}
and is therefore a Poisson measure on the configuration space
$\varGamma_X^\sharp$), now follows by a projective version of the
fundamental Kolmogorov extension theorem (see, e.g.,
\cite[\S\,A1.5]{DVJ1} or
\cite[Ch.~5]{Par}). More precisely, recall that the measure
$\lambda$ on $X$ is $\sigma$-finite, hence there is a countable
family of sets $B_k\in\mathcal{B}(X)$ such that
$\lambda(B_k)<\infty$ and $\bigcup_{k=1}^{\infty}\myn B_k=X$. Then
$\varLambda_m:=\bigcup_{k=1}^{m}\myn B_k\in\mathcal{B}(X)$
($m\in\NN$) is a monotone increasing sequence of sets such that
$\lambda(\varLambda_m)<\infty$ and
$\bigcup_{m=1}^{\infty}\myn\varLambda_m=X$. By the construction
(\ref{eq:P-Lambda}), we obtain a consistent family of probability
measures $\pi_\lambda^{\varLambda_m}$ on the configuration spaces
$\varGamma_{\varLambda_m}^\sharp$, respectively. Using the metric in
$X$ (which is assumed to be a Polish space, see
Section~\ref{sec:General}), one can define a suitable distance
between finite configurations in each space
$\varGamma_{\varLambda_m}^\sharp$ and thus convert
$\varGamma_{\varLambda_m}^\sharp$ into a Polish space (see
\cite{Sh}), which ensures that the Kolmogorov extension theorem is
applicable.

\begin{remark}\normalfont
Even though the paper \cite{Sh} deals with simple configurations
only, its methods may be easily extended to a more general case of
configurations with multiple points. However, finiteness of
configurations in each $\varLambda_m$ is essential.
\end{remark}

\begin{remark}\normalfont
The requirement that $X$ is a Polish space (see
Section~\ref{sec:General}) is only needed in order to equip the
spaces of finite configurations in the sets $\varLambda_m$ with the
structure of a Polish space and thus to be able to apply the
Kolmogorov extension theorem as explained above (see \cite{Sh}).
This assumption may be replaced by a more general condition that
$(X,\mathcal{B}(X))$ is a standard Borel space (i.e., Borel
isomorphic to a Borel subset of a Polish space, see \cite{Kal,Par}).
\end{remark}

\begin{remark}\label{rm:Lambda}
\normalfont Formula (\ref{eq:P-Lambda}), rewritten in the form
\begin{equation*}
\pi_\lambda^\varLambda(A)=\sum_{n=0}^\infty
\frac{\lambda(\varLambda)^n
\,\re^{-\lambda(\varLambda)}}{n!}\cdot\frac{\lambda^{\otimes\myp n}
\mynn\circ \mathfrak{p}^{-1}(A_{\varLambda,\myp
n})}{\lambda(\varLambda)^n}\,,
\end{equation*}
gives an explicit way of sampling a Poisson configuration
$\gamma_\varLambda$ in the set $\varLambda$: first, a random value
of $\gamma(\varLambda)$ is sampled as a Poisson random variable with
parameter $\lambda(\varLambda)<\infty$, and then, conditioned on the
event $\{\gamma(\varLambda)=n\}$ \,($n\in\ZZ_+$), the $n$ points are
distributed over $\varLambda$ independently of each other, with
probability distribution $\lambda(\rd x)/\lambda(\varLambda)$ each
(cf.\ \cite[\S\.2.4]{Kingman}).
\end{remark}

Decomposition (\ref{eq:P-Lambda}) implies that if $F(\gamma )\equiv
F(\gamma_\varLambda)$ for some set $\varLambda\in\calB(X)$ such that
$\lambda(\varLambda)<\infty$, then
\begin{align}
\notag \int_{\varGamma^\sharp_{X}}F(\gamma
)\,\pi_{\lambda}(\rd\gamma )&
=\int_{\varGamma_{X}^\sharp}F(p_\varLambda\gamma)\,\pi_{\lambda}(\rd\gamma)
=\int_{\varGamma_{\varLambda}^\sharp}F(\gamma
)\,\pi_{\lambda}^\varLambda(\rd\gamma)\\ \label{3.1} &=\re^{-\lambda
(\varLambda)}\sum_{n=0}^{\infty }\frac{1}{n!}\int_{
\varLambda^{n}}F(\{x_{1},\dots ,x_{n}\})\,\lambda (\rd x_{1})\cdots
\lambda (\rd x_{n}).
\end{align}

A well-known formula for the Laplace functional of a Poisson point
process without accumulation points (see, e.g., \cite{AKR1,DVJ1}])
is easily verified in the case of generalized configurations.
\begin{proposition}\label{pr:PoissonLT}
The Laplace functional
$L_{\pi_\lambda}[f]:=\int_{\varGamma_{X}^\sharp}\re^{-\langle
f,\gamma\rangle }\,\pi_{\lambda}(\rd\gamma )$ of the Poisson measure
$\pi_\lambda$ on the configuration space $\varGamma_X^\sharp$ is
given by
\begin{equation}\label{eq:PoissonLT}
L_{\pi_\lambda}[f]=\exp\left\{-\int_{X}\left(1-\re^{-f(x)}\right)\lambda
(\rd x)\right\},\qquad f\in\mathrm{M}_+(X).
\end{equation}
\end{proposition}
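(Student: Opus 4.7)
The plan is to establish \eqref{eq:PoissonLT} first for simple functions $f$ whose support has finite $\lambda$-measure, using only the finite-dimensional description of $\pi_\lambda$ in Definition~\ref{def:Poisson}, and then to pass to an arbitrary $f\in\mathrm{M}_+(X)$ by a monotone approximation that is compatible with the assumed $\sigma$-finiteness of $\lambda$.

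For the first step, I would take $f=\sum_{i=1}^k s_i\myp\mathbf{1}_{B_i}$ with pairwise disjoint $B_i\in\calB(X)$ satisfying $\lambda(B_i)<\infty$ and $s_i\ge 0$. Then, by the defining relation \eqref{eq:f-gamma},
\[
\langle f,\gamma\rangle=\sum_{i=1}^k s_i\myp\gamma(B_i),
\]
so by Definition~\ref{def:Poisson} the random variables $\gamma(B_i)$ are independent Poisson with parameters $\lambda(B_i)$, and a direct computation of the Poisson Laplace transform gives
\[
L_{\pi_\lambda}[f]=\prod_{i=1}^k \EE\bigl[\re^{-s_i\myp\gamma(B_i)}\bigr]
=\prod_{i=1}^k \exp\!\bigl(-\lambda(B_i)(1-\re^{-s_i})\bigr)
=\exp\!\left(-\int_X\bigl(1-\re^{-f(x)}\bigr)\lambda(\rd x)\right),
\]
where in the last equality I use that $1-\re^{-f(x)}$ vanishes off $\bigsqcup_i B_i$.

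For an arbitrary $f\in\mathrm{M}_+(X)$, I would use the $\sigma$-finiteness to pick an increasing exhausting sequence $\varLambda_m\in\calB(X)$ with $\lambda(\varLambda_m)<\infty$ and $\bigcup_m\varLambda_m=X$, and set
\[
f_n(x):=\mathbf{1}_{\varLambda_n}(x)\,\frac{\lfloor n\min\{f(x),n\}\rfloor}{n},\qquad n\in\NN.
\]
Each $f_n$ is a nonnegative simple function of the form treated in the first step, and $f_n\uparrow f$ pointwise. Since $\gamma$ is a $\overline{\ZZ}_+$-valued measure, monotone convergence on $(X,\gamma)$ yields $\langle f_n,\gamma\rangle\uparrow\langle f,\gamma\rangle$ (possibly $=\infty$) for every $\gamma$, hence $\re^{-\langle f_n,\gamma\rangle}\downarrow\re^{-\langle f,\gamma\rangle}$. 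Dominated convergence on $(\varGamma_X^\sharp,\pi_\lambda)$ with dominant $1$ then gives $L_{\pi_\lambda}[f_n]\to L_{\pi_\lambda}[f]$. On the right-hand side, $1-\re^{-f_n(x)}\uparrow 1-\re^{-f(x)}$ pointwise on $X$, so monotone convergence and continuity of $\exp(-\cdot)$ on $[0,\infty]$ pass the formula to the limit, completing the proof.

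The only point that needs a little care is that the proof must accommodate the possibility $\lambda(X)=\infty$ and also the possibility that $\langle f,\gamma\rangle=\infty$ on a set of positive $\pi_\lambda$-measure: in both cases the identity still makes sense with the conventions $\re^{-\infty}=0$ and $\exp(-\infty)=0$, and the monotone/dominated convergence arguments above remain valid. No further regularity of $f$ is required beyond measurability and nonnegativity.
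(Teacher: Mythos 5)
Your argument is correct in substance but takes a genuinely different route from the paper's. The paper works from the explicit construction (\ref{eq:P-Lambda}): it integrates $\re^{-\langle f_\varLambda,\gamma\rangle}$ with $f_\varLambda=f\cdot\mathbf{1}_\varLambda$ using the series representation (\ref{3.1}), sums the resulting exponential series to obtain the formula on each $\varLambda$ of finite measure, and then lets $\varLambda=\varLambda_m\uparrow X$. You instead use only the defining finite-dimensional property (\ref{eq:PoissonFD}) --- independence of the $\gamma(B_i)$ and the Poisson Laplace transform --- on simple functions, and then approximate a general $f\in\mathrm{M}_+(X)$ monotonically. What your approach buys is that the identity is seen to follow from Definition~\ref{def:Poisson} alone, for \emph{any} measure satisfying it, without appeal to the particular construction; what the paper's computation buys is a simultaneous verification that the measure built via (\ref{eq:P-Lambda}) indeed has the Poissonian Laplace functional, which is in the spirit of how $\pi_\lambda$ is introduced there. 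Both limit passages (monotone convergence on $X$, bounded convergence on $\varGamma_X^\sharp$) are handled the same way.

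One small technical slip: with $f_n(x)=\mathbf{1}_{\varLambda_n}(x)\,\lfloor n\min\{f(x),n\}\rfloor/n$ the sequence $(f_n)$ is \emph{not} monotone in $n$ (e.g.\ for $f(x)=1/2$ one gets $0,\,1/2,\,1/3,\dots$), so the claims $f_n\uparrow f$ and $\langle f_n,\gamma\rangle\uparrow\langle f,\gamma\rangle$ fail as stated; monotonicity is what you rely on to cover the case $\langle f,\gamma\rangle=\infty$ and to apply monotone convergence on the right-hand side. Replacing the mesh $1/n$ by the dyadic mesh $2^{-n}$, i.e.\ $f_n:=\mathbf{1}_{\varLambda_n}\cdot 2^{-n}\lfloor 2^{n}\min\{f,n\}\rfloor$, restores monotonicity and the rest of your argument goes through unchanged.
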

\proof Repeating a standard derivation, suppose that
$\lambda(\varLambda)<\infty$ and set $f_\varLambda:=f\cdot{\bf
1}_\varLambda$. Applying formula (\ref{3.1}) we have
\begin{align}
\notag\int_{\varGamma_{X}^\sharp}\re^{-\langle
f_\varLambda,\gamma\rangle}\,\pi_{\lambda}(\rd\gamma)
&=\re^{-\lambda (\varLambda)}\sum_{n=0}^{\infty }\frac{1}{n!}\int_{
\varLambda^{n}} \exp\left\{-\sum_{i=1}^n f_\varLambda(x_{i})\right\}
\lambda (\rd
x_{1})\cdots \lambda (\rd x_{n})\\
\notag
&=\re^{-\lambda (\varLambda)}\sum_{n=0}^{\infty
}\frac{1}{n!}\left(\int_{\varLambda} \re^{-f_\varLambda(x)}
\,\lambda (\rd x)\right)^n\\
 \label{eq:Poisson-Lambda}
&=\exp\left\{-\int_{X}\left(1-\re^{-f_\varLambda(x)}\right)\lambda
(\rd x)\right\}.
\end{align}
Since $f_\varLambda(x)\uparrow f(x)$ as $\varLambda \uparrow X$
(more precisely, setting $\varLambda=\varLambda_m$ as in the above
construction of $\pi_\lambda$
 and passing to the limit as $m\to\infty$), by applying the
monotone convergence theorem to both sides of
(\ref{eq:Poisson-Lambda}) we obtain (\ref{eq:PoissonLT}).
\endproof

Formula (\ref{eq:PoissonFD}) implies that if $B_1\cap B_2=\emptyset$
then the restricted configurations $\gamma_{B_1}$ and $\gamma_{B_2}$
are independent under the Poisson measure $\pi_\lambda$. That is, if
$B:=B_1\cup B_2$ then the distribution
$\pi_\lambda^B=p_B^*\myp\pi_\lambda$ of composite configurations
$\gamma_B=\gamma_{B_1}\sqcup \gamma_{B_2}$ coincides with the
product measure $\pi_\lambda^{B_1}\!\otimes \pi_\lambda^{B_2}$
($\pi_\lambda^{B_i}=p_{B_i}^*\myp\pi_\lambda$). Building on this
observation, we obtain the following useful result.
\begin{proposition}\label{pr:product}
Suppose that $(X_n,\calB(X_n))$ $(n\in\NN)$ is a family of disjoint
measurable spaces \textup{(}i.e., $X_i\cap X_j=\emptyset$, \,$i\ne
j$\textup{)}, with measures $\lambda_n$, respectively, and let\/
$\pi_{\lambda_n}$ be the corresponding Poisson measures on the
configuration spaces $\varGamma_{X_n}^\sharp$ $(n\in\NN)$. Consider
the disjoint-union space $X=\bigsqcup_{n=1}^{\infty}\myn X_n$
endowed with the $\sigma$-algebra
$\calB(X)=\bigoplus_{n=1}^{\infty}\calB(X_n)$ and measure
$\lambda=\bigoplus_{n=1}^{\infty}\myn\lambda_n$\myp. Then the
product measure $\pi_\lambda=\bigotimes_{n=1}^\infty
\pi_{\lambda_n}$ exists and is a Poisson measure on the
configuration space $\varGamma_X^\sharp$ with intensity measure
$\lambda$\mypp.
\end{proposition}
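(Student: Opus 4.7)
The plan is to identify $\varGamma_X^\sharp$ with the Cartesian product $\prod_{n\in\NN}\varGamma_{X_n}^\sharp$ via the natural splitting map and then transport the countable product of the Poisson measures $\pi_{\lambda_n}$ to $\varGamma_X^\sharp$. Existence of $\bigotimes_{n\in\NN}\pi_{\lambda_n}$ on the product space is standard, since each factor is a probability measure. First I would define
$$
\Psi:\prod_{n\in\NN}\varGamma_{X_n}^\sharp\to\varGamma_X^\sharp,\qquad (\gamma_n)_{n\in\NN}\mapsto \bigsqcup_{n\in\NN}\gamma_n,
$$
and observe that, since the $X_n$ are pairwise disjoint, $\Psi$ is a bijection with inverse $\gamma\mapsto(\gamma\cap X_n)_{n\in\NN}$. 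Measurability in both directions with respect to the cylinder $\sigma$-algebras is straightforward: for any $B\in\calB(X)$ one has $B=\bigsqcup_n(B\cap X_n)$, so the cylinder event $\{\gamma(B)=k\}$ pulls back under $\Psi$ to a countable union of measurable product cylinders $\bigcap_n\{\gamma_n(B\cap X_n)=k_n\}$ over all tuples $(k_n)$ with $\sum_n k_n=k$, and conversely any $\varGamma_{X_n}^\sharp$-cylinder of the form $\{\tilde\gamma_n(B_n)=k\}$ with $B_n\in\calB(X_n)\subset\calB(X)$ is itself a cylinder in $\varGamma_X^\sharp$.

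I would then set $\pi_\lambda:=\Psi_*\bigl(\bigotimes_{n\in\NN}\pi_{\lambda_n}\bigr)$ and verify the Poisson property via the Laplace functional of Proposition \ref{pr:PoissonLT}. For any $f\in\mathrm{M}_+(X)$, write $f_n:=f\cdot{\bf 1}_{X_n}\in\mathrm{M}_+(X_n)$; then the additivity identity $\langle f,\Psi((\gamma_n))\rangle=\sum_{n\in\NN}\langle f_n,\gamma_n\rangle$, together with Fubini (applied to finite truncations) and monotone convergence, yields
\begin{align*}
L_{\pi_\lambda}[f] &= \int_{\prod_n\varGamma_{X_n}^\sharp}\!\exp\!\left\{-\!\sum_{n\in\NN}\langle f_n,\gamma_n\rangle\right\}\prod_{n\in\NN}\pi_{\lambda_n}(\rd\gamma_n) \\
&= \prod_{n\in\NN} L_{\pi_{\lambda_n}}[f_n] = \exp\!\left\{-\!\sum_{n\in\NN}\!\int_{X_n}\!\bigl(1-\re^{-f(x)}\bigr)\lambda_n(\rd x)\right\} \\
&= \exp\!\left\{-\!\int_{X}\bigl(1-\re^{-f(x)}\bigr)\lambda(\rd x)\right\}.
\end{align*}
This coincides with the Laplace functional of the Poisson measure with intensity $\lambda$, and since Laplace functionals determine measures on $\calB(\varGamma_X^\sharp)$ uniquely (as explained after (\ref{eq:LAPLACE})), $\pi_\lambda$ is Poisson with intensity $\lambda$, as required.

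The main — essentially the only — obstacle is the measure-theoretic bookkeeping around $\Psi$: one must check that $\Psi$ and $\Psi^{-1}$ are both measurable between the cylinder $\sigma$-algebras, and that the additivity identity for $\langle f,\gamma\rangle$ holds as a genuine identity of $[0,\infty]$-valued measurable functions on $\prod_n\varGamma_{X_n}^\sharp$. Both points reduce to the disjointness of the $X_n$ and countable additivity of measures, so no machinery beyond a routine monotone class argument is needed. Note finally that the $\sigma$-finiteness of $\lambda=\bigoplus_n\lambda_n$ demanded by Definition \ref{def:Poisson} is automatic from the $\sigma$-finiteness of each $\lambda_n$ by diagonal enumeration of the exhausting sequences.
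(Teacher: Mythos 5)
Your proposal is correct and follows essentially the same route as the paper: identify $\varGamma_X^\sharp$ with the Cartesian product $\Cross_{n}\varGamma_{X_n}^\sharp$, invoke the standard existence theorem for infinite products of probability measures, and verify the Poisson property by factoring the Laplace functional via $\langle f,\gamma\rangle=\sum_n\langle f_{X_n},\gamma_{X_n}\rangle$ and Proposition~\ref{pr:PoissonLT}. The only difference is that you spell out the measurability of the identification map $\Psi$ and its inverse, which the paper takes for granted.
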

\proof Note that $\varGamma_X^\sharp$ is a Cartesian product space,
$\varGamma_X^\sharp=\Cross_{n=1}^{\infty}
\myn\varGamma_{X_n}^\sharp$, endowed with the product
$\sigma$-algebra
$\calB(\varGamma_X^\sharp)=\bigotimes_{n=1}^{\infty}
\calB(\varGamma_{X_n}^\sharp)$. The existence of the product measure
$\pi_\lambda:=\bigotimes_{n=1}^{\infty} \pi_{\lambda_n\!}$ on
$(\varGamma_X^\sharp,\calB(\varGamma_X^\sharp))$ now follows by a
standard result for infinite products of probability measures (see,
e.g., \cite[\S\,38, Theorem~B]{Halmos} or
\cite[Corollary~5.17]{Kal}). Let us point out that this theorem is
valid without any regularity conditions on the spaces $X_n$.

To show that $\pi_\lambda$ is a Poisson measure, one could check the
cylinder condition (\ref{eq:P-Lambda}), but it is easier to compute
its Laplace functional. Note that each function
$f\in\mathrm{M}_+(X)$ is decomposed as $f=\sum_{n=1}^\infty
f_{X_n}\!\cdot{\bf 1}_{X_n}$, where $f_{X_n}\!\in\mathrm{M}_+(X_n)$
is the restriction of $f$ to $X_n$; similarly, each configuration
$\gamma\in\varGamma_X^\sharp$ may be represented as
$\gamma=\bigsqcup_{n=1}^{\infty}\gamma_{X_n}$, where
$\gamma_{X_n}=p_{X_n}\myn\gamma\in\varGamma_{X_n}^\sharp$. Hence,
$\langle f,\gamma\rangle=\sum_{n=1}^\infty \langle
f_{X_n},\gamma_{X_n}\rangle$ and, using Proposition
\ref{pr:PoissonLT} for each $\pi_{\lambda_n}$, we obtain
\begin{align*}
\int_{\varGamma_X^\sharp}\re^{-\langle f,\gamma\rangle
}\,\pi_{\lambda}(\rd\gamma )&=\int_{\ssCross_{n=1}^\infty
\myn\varGamma_{X_n}^\sharp}\!\exp\left\{-\sum_{n=1}^\infty \langle
f_{X_n},\gamma_{n}\rangle \right\}\,\bigotimes_{n=1}^\infty\pi_{\lambda_n}(\rd\gamma_n)\\
&=\prod_{n=1}^\infty\int_{\varGamma_{X_n}^\sharp}\re^{-\langle
f_{X_n},\gamma_n\rangle }\,\pi_{\lambda_n}(\rd\gamma_n)\\
&=\exp\left\{-\sum_{n=1}^\infty\int_{X_n}\!\!\left(1-\re^{-f_{X_n}(x_n)}\right)\lambda_n
(\rd x_n)\right\}\\
&=\exp\left\{-\int_{X}\left(1-\re^{-f(x)}\right)\lambda(\rd
x)\right\},
\end{align*}
and it follows, according to formula (\ref{eq:PoissonLT}), that
$\pi_\lambda$ is a Poisson measure.
\endproof

\begin{remark}\label{rm:product}
\normalfont Using Proposition \ref{pr:product}, one can give a
construction of a Poisson measure $\pi_\lambda$ on the configuration
space $\varGamma_X^\sharp$ avoiding any additional topological
conditions upon the space $X$ (e.g., that $X$ is a Polish space)
that are needed for the sake of the Kolmogorov extension theorem
(similar ideas are developed in \cite{Kingman,Kingman2} in the
context of proper configuration spaces). To do so, recall that the
measure $\lambda$ is $\sigma$-finite and define
$X_n:=\varLambda_{n}\setminus\varLambda_{n-1}$ ($n\in\NN$), where
the sets $\emptyset=\varLambda_0\subset\varLambda_1
\subset\dots\subset\varLambda_n\subset\cdots\subset X$, such that
$\lambda(\varLambda_n)<\infty$ and
$\bigcup_{n=1}^\infty\varLambda_{n}=X$, were considered above. Then
the family of sets $(X_n)$ is a disjoint partition of $X$ (i.e.,
$X_i\cap X_j=\emptyset$ for $i\ne j$ and $\bigcup_{n=1}^\infty
X_{n}=X$), such that $\lambda(X_n)<\infty$ for all $n\in\NN$. Using
formula (\ref{eq:PoissonFD}), we construct the Poisson measures
$\pi_{\lambda_n}\equiv p_{X_n}\pi_\lambda$ on each
$\varGamma_{X_n}^\sharp$, where $\lambda_n=\lambda_{X_n}$ is the
restriction of the measure $\lambda$ to the set $X_n$. Now, it
follows by Proposition \ref{pr:product} that the product measure
$\pi_\lambda=\bigotimes_{n=1}^\infty \pi_{\lambda_n}$ is the
required Poisson measure on $\varGamma_X^\sharp$.
\end{remark}
\begin{remark}\normalfont
Although not necessary for the \emph{existence} of the Poisson
measure, in order to develop a sensible theory one needs to ensure
that there are enough measurable sets and in particular any
singleton set $\{x\}$ is measurable. To this end, it is suitable to
assume (see \cite[\S\,2.1]{Kingman}) that the diagonal set $\{x=y\}$
is measurable in the product space $X^2=X\times X$, that is,
\begin{equation}\label{eq:diag}
D:=\{(x,y)\in X^2: x=y\}\in\mathcal{B}(X^2).
\end{equation}
This condition readily implies that $\{x\}\in\mathcal{B}(X)$ for
each $x\in X$. Note that if $X$ is a Polish space, condition
(\ref{eq:diag}) is automatically satisfied because then the diagonal
$D$ is a closed set in $X^2$.
\end{remark}

Let us also record one useful general result known as the Mapping
Theorem (see \cite[\S\,2.3]{Kingman}, where configurations are
assumed proper and the mapping is one-to-one). Let $\varphi:
X\rightarrow Y$ be a measurable mapping (not necessarily one-to-one)
of $X$ to another (or the same) measurable space $Y$ endowed with
Borel $\sigma$-algebra $\mathcal{B}(Y)$. The mapping $\varphi $ can
be lifted to a measurable ``diagonal'' mapping (denoted by the same
letter) between the configuration spaces $\varGamma_{ X}^\sharp$ and
$\varGamma_{Y}^\sharp$:
\begin{equation}\label{eq:di*}
\varGamma_{X}^\sharp\ni \gamma\mapsto \varphi(\gamma):
=\bigsqcup_{x\in\gamma} \{\varphi(x)\}\in\varGamma_{Y}^\sharp.
\end{equation}

\begin{proposition}[Mapping Theorem]\label{pr:mapping}
If\/ $\pi_\lambda$ is a Poisson measure on $\varGamma_{X}^\sharp$
with intensity measure $\lambda$, then under the mapping
\textup{(\ref{eq:di*})} the push-forward measure
$\varphi^*\pi_\lambda\equiv\pi_\lambda\circ \varphi^{-1}$ is a
Poisson measure on $\varGamma_{Y}^\sharp$ with intensity measure
$\varphi^*\lambda\equiv \lambda\circ \varphi^{-1}$.
\end{proposition}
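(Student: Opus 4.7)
The plan is to identify $\varphi^*\pi_\lambda$ by computing its Laplace functional and matching it against the Poisson form (\ref{eq:PoissonLT}), invoking the uniqueness of a measure on $\calB(\varGamma_Y^\sharp)$ determined by its Laplace functional (as explained after~(\ref{eq:LAPLACE})). First I would verify that the diagonal lift $\varphi:\varGamma_X^\sharp\to\varGamma_Y^\sharp$ in~(\ref{eq:di*}) is measurable: for any cylinder set $C_B^{\myp n}\in\calB(\varGamma_Y^\sharp)$ with $B\in\calB(Y)$, one checks directly from~(\ref{eq:di*}) that $\varphi^{-1}(C_B^{\myp n})=C_{\varphi^{-1}(B)}^{\myp n}\in\calB(\varGamma_X^\sharp)$, since $\varphi^{-1}(B)\in\calB(X)$. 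Hence $\varphi^*\pi_\lambda\equiv\pi_\lambda\circ\varphi^{-1}$ is a well-defined probability measure on $(\varGamma_Y^\sharp,\calB(\varGamma_Y^\sharp))$.

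Next, for any $f\in\mathrm{M}_+(Y)$, the definition of the diagonal lift together with the summation convention~(\ref{eq:f-gamma}) gives
$$
\langle f,\varphi(\gamma)\rangle=\sum_{x\in\gamma}f(\varphi(x))=\langle f\circ\varphi,\gamma\rangle,
$$
where points are counted with their multiplicities, which absorbs any additional coincidences produced by $\varphi$ being many-to-one. Since $f\circ\varphi\in\mathrm{M}_+(X)$, Proposition~\ref{pr:PoissonLT} applies, and combined with the standard change-of-variable identity $\int_X (g\circ\varphi)\,\rd\lambda=\int_Y g\,\rd(\varphi^*\lambda)$ (used with $g(y)=1-\re^{-f(y)}$), it yields
$$
L_{\varphi^*\pi_\lambda}[f]=L_{\pi_\lambda}[f\circ\varphi]=\exp\biggl\{-\int_Y\bigl(1-\re^{-f(y)}\bigr)(\varphi^*\lambda)(\rd y)\biggr\}.
$$
This is precisely the Poisson Laplace functional~(\ref{eq:PoissonLT}) with intensity $\varphi^*\lambda$, so the uniqueness argument recorded after~(\ref{eq:LAPLACE}) identifies $\varphi^*\pi_\lambda$ with the Poisson measure $\pi_{\varphi^*\lambda}$.

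The only real point of care is that Definition~\ref{def:Poisson} requires the intensity measure to be $\sigma$-finite, so that $\pi_{\varphi^*\lambda}$ exists as a Poisson measure on $\varGamma_Y^\sharp$ in the sense of this paper; this is a standing assumption on the admissible maps $\varphi$ (automatically met, for instance, whenever a $\sigma$-finite exhaustion $\varLambda_m\uparrow X$ of $X$ can be pushed forward to a $\sigma$-finite exhaustion of the range, as in our intended applications to the unpacking map $\mathfrak{p}$). The Laplace-functional computation itself is indifferent to this subtlety, since the defining relation~(\ref{eq:PoissonFD}) can in any event be read off from the values of $L_{\varphi^*\pi_\lambda}[s\myp{\bf 1}_B]$ at $B$ with $\varphi^*\lambda(B)<\infty$. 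I do not foresee any other obstacle: once Proposition~\ref{pr:PoissonLT} is in place, the argument reduces to a single change of variable under the pushforward.
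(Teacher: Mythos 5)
Your proposal is correct and follows essentially the same route as the paper: both compute the Laplace functional of $\varphi^*\pi_\lambda$ via the identity $\langle f,\varphi(\gamma)\rangle=\langle f\circ\varphi,\gamma\rangle$, apply Proposition~\ref{pr:PoissonLT} to $f\circ\varphi$, and conclude by a change of variables and the uniqueness of a measure with a given Laplace functional. Your additional remarks on measurability of the lifted map and on $\sigma$-finiteness of $\varphi^*\lambda$ are sensible housekeeping that the paper leaves implicit.
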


\proof It suffices to compute the Laplace functional of $\varphi
^{*}\pi _{\lambda}$. Using Proposition \ref{pr:PoissonLT}, for any
$f\in \mathrm{M}_+(Y)$ we have
\begin{align*}
L_{\varphi ^{*}\pi _{\lambda}}[f]&=
\int_{\varGamma_{Y}^\sharp}\re^{-\left\langle f,\gamma
\right\rangle}\,(\varphi ^{*}\pi_{\lambda})(\rd\gamma
)=\int_{\varGamma _{X}^\sharp}\re^{-\left\langle
f,\mypp\varphi(\gamma
)\right\rangle }\,\pi _{\lambda}(\rd\gamma )\\
&= \exp\left\{-\int_{X}\left(1-\re^{-f(\varphi(x))}\right)\lambda
(\rd x)\right\} \\[.2pc]
&=\exp\left\{-\int_{Y}\left(1-\re^{-f(y)}\right)(\varphi^{*}\myn\lambda)(\rd
y)\right\} = L_{\pi_{\varphi ^{*}\myn\lambda}}[f],
\end{align*}
and the proof is complete.
\endproof

We conclude this section with necessary and sufficient conditions in
order that $\pi_\lambda$-almost all (a.a.) configurations $\gamma\in
\varGamma _{X}^\sharp$ be proper (see Definition~\ref{def:proper}).
Although being apparently well-known folklore, these criteria are
not always proved or even stated explicitly in the literature, most
often being mixed up with various sufficient conditions, e.g., using
the property of orderliness etc.\ (see, e.g.,
\cite{CI,DVJ1,Kingman}). We do not include the proof here, as the
result follows from a more general statement for the Poisson cluster
measure (see Theorem~\ref{th:properClusterPoisson} below).

\begin{proposition}\label{pr:properPoisson}
\textup{(a)} If\/ $B\in\mathcal{B}(X)$ then $\gamma(B)<\infty$
\,\textup{(}$\pi_\lambda$-a.s.\textup{)} if and only if
$\lambda(B)<\infty$. In particular, in order that
$\pi_\lambda$-a.a.\ configurations $\gamma\in \varGamma _{X}^\sharp$
be locally finite, it is necessary and sufficient that
$\lambda(K)<\infty$ for any compact set $K\in{\mathcal B}(X)$.

\textup{(b)} In order that $\pi_\lambda$-a.a.\ configurations
$\gamma\in \varGamma _{X}^\sharp$ be simple, it is necessary and
sufficient that the measure $\lambda$ be non-atomic, that is,
$\lambda\{x\}=0$ for each $x\in X$.
\end{proposition}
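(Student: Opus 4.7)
\proof[Proof plan] My plan is to handle both parts using only the Laplace functional from Proposition~\ref{pr:PoissonLT} together with the explicit cylindrical representation~(\ref{eq:P-Lambda}).

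\emph{Part (a).} The ``if'' direction is built into Definition~\ref{def:Poisson}: when $\lambda(B)<\infty$, $\gamma(B)$ is Poisson with parameter $\lambda(B)$, so $\pi_\lambda(\gamma(B)<\infty)=\sum_{n=0}^\infty \lambda(B)^n\re^{-\lambda(B)}/n!=1$. For the ``only if'' direction, I would apply (\ref{eq:PoissonLT}) to the test function $f=s\mynnn\cdot\mynnn{\bf 1}_B$ with $s>0$, giving
\begin{equation*}
\EE\bigl[\re^{-s\myp\gamma(B)}\bigr]=L_{\pi_\lambda}[s\mynnn\cdot\mynnn{\bf 1}_B]=\exp\bigl\{-(1-\re^{-s})\myp\lambda(B)\bigr\}.
\end{equation*}
If $\lambda(B)=\infty$, the right-hand side vanishes for every $s>0$; since $\EE[\re^{-s\myp\gamma(B)}]>0$ whenever $\gamma(B)<\infty$ with positive probability, we conclude $\gamma(B)=\infty$ $\pi_\lambda$-a.s. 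For the local finiteness statement, I would exploit the fact that $X$ is Polish and hence $\sigma$-compact with a \emph{countable} base of compact sets (e.g.\ closed balls of integer radius around a countable dense set), reducing ``$\gamma(K)<\infty$ for every compact $K$'' to a countable intersection of a.s.\ events.

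\emph{Part (b).} For the ``only if'' direction, if $\lambda\{x_0\}=c>0$ for some $x_0\in X$ (noting that singletons are measurable by the diagonal assumption (\ref{eq:diag})), then $\gamma\{x_0\}$ is Poisson with parameter $c$, so $\pi_\lambda(\gamma\{x_0\}\ge 2)=1-(1+c)\myp\re^{-c}>0$, contradicting simplicity. For the ``if'' direction, fix $\varLambda\in\calB(X)$ with $\lambda(\varLambda)<\infty$ and expand the probability of having a multiple point in $\varLambda$ using the decomposition (\ref{eq:P-Lambda}):
\begin{equation*}
\pi_\lambda^\varLambda\bigl(\{\gamma\in\varGamma_\varLambda^\sharp:\ \exists\, i\ne j,\ x_i=x_j\}\bigr)=\re^{-\lambda(\varLambda)}\sum_{n=2}^\infty\frac{1}{n!}\int_{\varLambda^n}\!{\bf 1}_{\{\exists\myp i\ne j:\,x_i=x_j\}}\,\lambda^{\otimes\myp n}(\rd\bar x).
\end{equation*}
Dominating the indicator by $\sum_{i<j}{\bf 1}_{\{x_i=x_j\}}$ and applying Fubini, each term reduces to $\lambda(\varLambda)^{n-2}\myn\int_{\varLambda^2}{\bf 1}_D\,\rd(\lambda\otimes\lambda)$, which vanishes because $\int_{\varLambda}{\bf 1}_{\{x=y\}}\,\lambda(\rd y)=\lambda\{x\}=0$ for every $x$ by non-atomicity. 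To pass from $\varLambda$ to $X$, I would use $\sigma$-finiteness to write $X=\bigsqcup_{n}\myn X_n$ with $\lambda(X_n)<\infty$ as in Remark~\ref{rm:product}; a multiple point in $\gamma$ must lie in some $X_n$, and we have just ruled this out.

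\emph{Main obstacle.} The genuinely delicate step is the ``if'' part of (b): making the Fubini argument airtight requires the diagonal $D$ to be measurable in $X^2$ (covered by (\ref{eq:diag})) and a careful passage from finite-measure windows $\varLambda$ to all of $X$ via the $\sigma$-finite partition. Everything else is a direct consequence of the Laplace functional identity (\ref{eq:PoissonLT}) and the product structure (\ref{eq:PoissonFD}).
\endproof
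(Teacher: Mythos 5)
Your argument is essentially correct, but note that the paper never proves this proposition directly: it derives it as the degenerate special case of Theorem~\ref{th:properClusterPoisson} (each cluster consisting of the single point at its centre), whose proof in Section~\ref{app1} expands the Laplace functional $L[f_q]$ with $f_q=-\ln q\cdot{\bf 1}_K$ in powers of $q$ and lets $q\uparrow1$. For part (a) your route and the paper's coincide in substance: both reduce to the observation, already recorded after (\ref{eq:LAPLACE}), that $L_{\pi_\lambda}[s\myp{\bf 1}_B]=\exp\{-(1-\re^{-s})\myp\lambda(B)\}$ vanishes identically in $s$ exactly when $\gamma(B)=\infty$ a.s. For part (b) your route is genuinely different and more elementary: the paper characterizes simplicity at a point $x_0$ by requiring $q\mapsto L[f_q]$, $f_q=-\ln q\cdot{\bf 1}_{\{x_0\}}$, to be linear in $q$, and handles sufficiency by a conditional pairwise-ties computation, whereas you work directly with the finite-window representation (\ref{eq:P-Lambda}) and kill the diagonal by Fubini, using that $\lambda\otimes\lambda$ of $D\cap\varLambda^2$ vanishes for non-atomic $\lambda$; the passage to all of $X$ via the $\sigma$-finite partition of Remark~\ref{rm:product} is exactly right. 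What the paper's heavier machinery buys is that the cluster case is covered in one stroke; your direct argument buys transparency in the pure Poisson case.

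One slip to repair: a Polish space is \emph{not} in general $\sigma$-compact, and closed balls about a countable dense set need not be compact (consider an infinite-dimensional separable Hilbert space), so your proposed reduction of ``$\gamma(K)<\infty$ for every compact $K$'' to countably many almost-sure events fails as stated for general Polish $X$. This is harmless for $X=\RR^d$, which is all that is used from Section~\ref{sec:QI-IBP} onwards, and the paper's own proof of Theorem~\ref{th:properClusterPoisson} is equally silent on the simultaneity over uncountably many compacta; but if you want the general statement you should either assume $X$ locally compact and second countable (hence genuinely $\sigma$-compact with a countable exhausting family of compacta) or argue through inner regularity of $\lambda$ rather than through a countable base of compact sets.
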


\subsection{Poisson cluster measure}\label{sec:Poisson-cluster}

Let us first recall the notion of a general cluster point process
(CPP). The intuitive idea is to construct its realizations in two
steps: (i) take a background random configuration of (invisible)
``centres'' obtained as a realization of some point process
$\gamma_{\rm c}$ governed by a probability measure $\mu_{\rm c}$ on
$\varGamma_{X}^\sharp$, and (ii) relative to each centre
$x\in\gamma_{\rm c}$, generate a set of observable secondary points
(referred to as a \emph{cluster centred at~$x$}) according to a
point process $\gamma_x^{\myp\prime}$ with probability measure
$\mu_x$ on $\varGamma_{X}^\sharp$ ($x\in X$).

The resulting (countable) assembly of random points, called the
\emph{cluster point process}, can be symbolically expressed as
\begin{equation*}
\gamma=\bigsqcup_{x\in\gamma_{\rm c}}\mynn
\gamma^{\myp\prime}_{x}\in\varGamma_{X}^\sharp,
\end{equation*}
where the disjoint union signifies that multiplicities of points
should be taken into account. More precisely, assuming that the
family of secondary processes $\gamma^{\myp\prime}_x(\cdot)$ is
measurable as a function of $x\in X$, the integer-valued measure
corresponding to a CPP realization $\gamma$ is given by
\begin{equation}\label{eq:cluster-gamma}
\gamma(B)=\int_{X} \gamma_x^{\myp\prime}(B)\,\gamma_{\rm c}(\rd
x)=\sum_{x\in\gamma_{\rm c}}
\gamma_x^{\myp\prime}(B)=\sum_{x\in\gamma_{\rm c}}
\sum_{y\in\gamma_x^{\myp\prime}}\delta_y(B),\qquad B\in\calB(X).
\end{equation}

A tractable model of such a kind is obtained when (i) $X$ is a
linear space so that translations $X\ni y\mapsto y+x\in X$ are
defined, and (ii) random clusters are independent and identically
distributed (i.i.d.), being governed by the same probability law
translated to the cluster centres,
\begin{equation}\label{eq:theta-}
\mu_{x}(A)= \mu_{0}(A-x), \qquad A\in\calB(\varGamma^\sharp_X).
\end{equation}
From now on, we make both of these assumptions.

\begin{remark}\normalfont
Unlike the standard theory of CPPs whose sample configurations are
\emph{presumed} to be a.s.\ locally finite (see, e.g.,
\cite[Definition 6.3.I]{DVJ1}), the description of the CPP given
above only implies that its configurations $\gamma$ are countable
aggregates in $ X$, but possibly with multiple and/or accumulation
points, even if the background point process $\gamma_{\rm c}$ is
proper. Therefore, the distribution $\mu$ of the CPP
(\ref{eq:cluster-gamma}) is a probability measure defined on the
space $\varGamma_{ X}^\sharp$ of \emph{generalized} configurations.
It is a matter of interest to obtain conditions in order that $\mu$
be actually supported on the proper configuration space $\varGamma_{
X}$, and we shall address this issue in Section \ref{sec:proper}
below in the case of Poisson CPPs.
\end{remark}

Let $\nu_x:=\gamma_x^{\myp\prime}( X)$ be the total (random) number
of points in a cluster $\gamma_x^{\myp\prime}$ centred at point
$x\in X$ (referred to as the \emph{cluster size}). According to our
assumptions, the random variables $\nu_x$ are i.i.d.\ for different
$x$, with common distribution
\begin{equation}\label{pn}
p_n:=\mu_{0} \{\nu_0=n\} \qquad (n\in \overline{\ZZ}_+)
\end{equation}
(so in principle the event $\{\nu_0=\infty\}$ may have a positive
probability, $p_\infty\ge0$).

\begin{remark}\normalfont
One might argue that allowing for vacuous clusters (i.e., with
$\nu_x=0$) is superfluous since these are not visible in a sample
configuration, and in particular the probability $p_0$ cannot be
estimated statistically \cite[Corollary~6.3.VI]{DVJ1}. In fact, the
possibility of vacuous cluster may be ruled out without loss of
generality, at the expense of rescaling the background intensity
measure, $\lambda\mapsto (1-p_0)\mypp\lambda$. However, we keep this
possibility in our model in order to provide a suitable framework
for evolutionary cluster point processes with annihilation and
creation of particles, which we intend to study elsewhere.
\end{remark}

The following fact is well known in the case of CPPs without
accumulation points (see, e.g., \cite[\S\,6.3]{DVJ1}).
\begin{proposition}\label{pr:cluster}
The Laplace functional $L_{\mu}[\cdot]$ of the probability measure
$\mu$ on $\varGamma_ X^\sharp$ corresponding to the CPP\/
\textup{(\ref{eq:cluster-gamma})} is given, for all functions
$f\in\mathrm{M}_+(X)$, by
\begin{equation}\label{laplace}
\begin{aligned}
L_{\mu}[f] =L_{\mu_{\rm c}}\myn\bigl(-\ln L_{\mu_{x}}[f]\bigr)=
L_{\mu_{\rm c}}\myn\bigl(-\ln L_{\mu_{0}}[f(\;\!\cdot +x)]\bigr),
\end{aligned}
\end{equation}
where $L_{\mu_{\rm c}}$ acts in variable $x$.
\end{proposition}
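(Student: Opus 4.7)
\proof The plan is to compute $L_\mu[f]$ directly from the definition by conditioning on the centre configuration $\gamma_{\rm c}$ and exploiting the conditional independence of the clusters.

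First I would use the decomposition in (\ref{eq:cluster-gamma}) to write, for any $f\in\mathrm{M}_+(X)$,
\begin{equation*}
\langle f,\gamma\rangle = \int_X f(y)\,\gamma(\rd y) = \sum_{x\in\gamma_{\rm c}}\langle f,\gamma^{\myp\prime}_x\rangle,
\end{equation*}
which is well-defined (possibly $+\infty$) since $f\ge0$. Next, conditional on a realization of $\gamma_{\rm c}$, the clusters $\{\gamma^{\myp\prime}_x\}_{x\in\gamma_{\rm c}}$ are independent by construction, with $\gamma^{\myp\prime}_x$ distributed according to $\mu_x$. Hence
\begin{equation*}
\EE\!\left[\re^{-\langle f,\gamma\rangle}\,\Big|\,\gamma_{\rm c}\right] = \prod_{x\in\gamma_{\rm c}}\EE\!\left[\re^{-\langle f,\gamma^{\myp\prime}_x\rangle}\right] = \prod_{x\in\gamma_{\rm c}} L_{\mu_x}[f] = \exp\!\left\{-\sum_{x\in\gamma_{\rm c}}\bigl(-\ln L_{\mu_x}[f]\bigr)\right\}.
\end{equation*}
Since $0\le L_{\mu_x}[f]\le 1$, the function $x\mapsto -\ln L_{\mu_x}[f]$ is non-negative and measurable, so the exponent is well-defined in $[-\infty,0]$ and we may recognize the right-hand side as $\exp\!\bigl(-\langle -\ln L_{\mu_\CD}[f],\gamma_{\rm c}\rangle\bigr)$ in the sense of (\ref{eq:f-gamma}).

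Taking the outer expectation with respect to $\mu_{\rm c}$ (justified by Fubini/Tonelli, as all integrands are non-negative) yields
\begin{equation*}
L_\mu[f] = \int_{\varGamma_X^\sharp}\exp\!\bigl(-\langle -\ln L_{\mu_\CD}[f],\gamma_{\rm c}\rangle\bigr)\,\mu_{\rm c}(\rd\gamma_{\rm c}) = L_{\mu_{\rm c}}\!\bigl(-\ln L_{\mu_x}[f]\bigr),
\end{equation*}
which is the first equality in (\ref{laplace}). For the second equality, I would invoke the translation assumption (\ref{eq:theta-}): if $\gamma^{\myp\prime}_0\sim\mu_0$, then $\gamma^{\myp\prime}_x$ has the same distribution as the shifted configuration $\{y+x:y\in\gamma^{\myp\prime}_0\}$, so that
\begin{equation*}
L_{\mu_x}[f] = \EE\!\left[\exp\!\left(-\sum_{y\in\gamma^{\myp\prime}_0}f(y+x)\right)\right] = L_{\mu_0}[f(\,\cdot+x)],
\end{equation*}
which finishes the proof.

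The only delicate point is the interchange of the (possibly infinite) product and the conditional expectation, and the measurability of $x\mapsto L_{\mu_x}[f]$; both are routine given the standing measurability assumption on the family $\{\gamma^{\myp\prime}_x\}$ and the non-negativity of $f$, so I do not expect any genuine obstacle. \endproof
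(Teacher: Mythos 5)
Your proposal is correct and follows essentially the same route as the paper's proof: decompose $\langle f,\gamma\rangle$ over the clusters, condition on $\gamma_{\rm c}$, factorize by conditional independence into $\prod_{x\in\gamma_{\rm c}}L_{\mu_x}[f]$, rewrite as $\exp\{-\langle-\ln L_{\mu_\CD}[f],\gamma_{\rm c}\rangle\}$, and integrate against $\mu_{\rm c}$, with the second equality coming from the translation property (\ref{eq:theta-}). No substantive differences.
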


\proof The representation (\ref{eq:cluster-gamma}) of cluster
configurations $\gamma$ implies that
$$
\langle f,\gamma\rangle=\sum_{z\in \gamma}f(z)= \sum_{x\in
\gamma_{\rm c}}\sum_{y\in \gamma_x^{\myp\prime}} f(y).
$$
Conditioning on the background configuration $\gamma_{\rm c}$ and
using the independence of the clusters $\gamma_x^{\myp\prime}$ for
different $x$, we obtain
\begin{align*}
\int_{\varGamma_ X^\sharp}\re^{-\langle f,\gamma\rangle
}\,\mu(\rd\gamma)&=\int_{\varGamma_ X^\sharp} \prod_{x\in\gamma_{\rm
c}}\left( \int_{\varGamma_
X^\sharp}\re^{-\sum_{y\in\gamma_x^{\myp\prime}}
f(y)}\,\mu_{x}(\rd\gamma_x^{\myp\prime})\right)\mu_{\rm c}(\rd\gamma_{\rm c})\\
&=\int_{\varGamma_ X^\sharp} \exp\biggl\{\myn\sum_{x\in\gamma_{\rm
c}} \ln \left(L_{\mu_{x}}[f]\right)\biggr\}\:\mu_{\rm
c}(\rd\gamma_{\rm c})=L_{\mu_{\rm c}}\bigl(-\ln
L_{\mu_{x}}[f]\bigr),
\end{align*}
which proves the first formula in (\ref{laplace}). The second one
easily follows by shifting the measure $\mu_{x}$ to the origin using
(\ref{eq:theta-}).
\endproof

In this paper, we are mostly concerned with the \emph{Poisson
CPPs}\/, which are specified by assuming that $\mu_{\rm c}$ is a
Poisson measure on configurations, with some intensity measure
$\lambda$. The corresponding probability measure  on the
configuration space $\varGamma^\sharp_{X}$ will be denoted by
$\mucl$ and called the \emph{Poisson cluster measure}.

The combination of (\ref{eq:PoissonLT}) and (\ref{laplace}) yields a
formula for the Laplace functional of the measure $\mucl$.

\begin{proposition}\label{pr:clusterP}
The Laplace functional $L_{\mucl}[f]$ of the Poisson cluster measure
$\mucl$ on $\varGamma^\sharp_{ X}$ is given, for all
$f\in\mathrm{M}_+(X)$, by
\begin{equation}\label{eq:ClusterPoissonLT}
L_{\mucl}[f]=\exp \left\{-\int_{X} \left(\int_{\varGamma^\sharp_{
X}} \left(1-\re^{-\sum_{y\in\gamma_0^{\myp\prime}}
f(y+x)}\right)\mu_0(\rd \gamma_0^{\myp\prime})\right)\lambda(\rd
x)\right\}.
\end{equation}
\end{proposition}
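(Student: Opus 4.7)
The plan is to combine Proposition \ref{pr:cluster} with Proposition \ref{pr:PoissonLT} in the obvious way: apply the former to reduce the cluster Laplace functional to a Poisson Laplace functional evaluated at the ``inner'' integrand, then apply the latter to the resulting expression.

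More precisely, first I would set
\begin{equation*}
g(x) := -\ln L_{\mu_{0}}[f(\,\cdot\,+x)] = -\ln\int_{\varGamma^\sharp_{X}} \re^{-\sum_{y\in\gamma_0^{\myp\prime}}f(y+x)}\,\mu_0(\rd\gamma_0^{\myp\prime}),\qquad x\in X.
\end{equation*}
Since $f\in\mathrm{M}_+(X)$, the inner integrand lies in $[0,1]$, so $L_{\mu_0}[f(\,\cdot\,+x)]\in(0,1]$ and hence $g(x)\ge 0$; moreover $g$ is $\calB(X)$-measurable by Fubini applied to the jointly measurable map $(x,\gamma_0^{\myp\prime})\mapsto \sum_{y\in\gamma_0^{\myp\prime}}f(y+x)$. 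Thus $g\in\mathrm{M}_+(X)$, and Proposition~\ref{pr:PoissonLT} is applicable with intensity $\lambda$.

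Next, I would invoke Proposition~\ref{pr:cluster} with $\mu_{\rm c}=\pi_\lambda$ to write $L_{\mucl}[f]=L_{\pi_\lambda}[g]$, and then use (\ref{eq:PoissonLT}) to get
\begin{equation*}
L_{\mucl}[f] = \exp\left\{-\int_X\bigl(1-\re^{-g(x)}\bigr)\,\lambda(\rd x)\right\}.
\end{equation*}
Since by construction $\re^{-g(x)}=L_{\mu_0}[f(\,\cdot\,+x)]=\int_{\varGamma_X^\sharp}\re^{-\sum_{y\in\gamma_0^{\myp\prime}}f(y+x)}\,\mu_0(\rd\gamma_0^{\myp\prime})$, substituting this back yields formula (\ref{eq:ClusterPoissonLT}).

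There is essentially no hard step here; the only mild care needed is to verify the hypotheses of Proposition~\ref{pr:PoissonLT} for the function $g$ (nonnegativity and measurability), which is immediate from $f\ge 0$ and standard measurability of Laplace functionals in the parameter $x$. Everything else is direct substitution.
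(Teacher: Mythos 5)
Your proposal is correct and follows exactly the route the paper takes: the paper obtains Proposition~\ref{pr:clusterP} precisely by combining the general cluster formula (\ref{laplace}) of Proposition~\ref{pr:cluster} with the Poisson Laplace functional (\ref{eq:PoissonLT}) of Proposition~\ref{pr:PoissonLT}, and then using that $\mu_0$ is a probability measure to pull the $1$ inside the inner integral. Your added verification that $g\in\mathrm{M}_+(X)$ is a sensible (if routine) piece of care that the paper leaves implicit.
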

According to the convention made in Section~\ref{sec:General} (see
after equation~(\ref{eq:f-gamma})), if
$\gamma_0^{\myp\prime}=\emptyset$ then the function under the
internal integral in (\ref{eq:ClusterPoissonLT}) vanishes, so the
integral over $\varGamma_X^\sharp$ is reduced to that over the
subset $\{\gamma_0^{\myp\prime}\in\varGamma_X^\sharp:
\gamma_0^{\myp\prime}\ne\emptyset\}$.

\subsection{Criteria of local finiteness and
simplicity}\label{sec:proper}

In this section, we give criteria for the Poisson CPP to be locally
finite and simple. As mentioned in the Introduction, these results
appear to be new (e.g., a general criterion of local finiteness in
\cite[Lemma 6.3.II and Proposition~6.3.III]{DVJ1} is merely a more
formal rewording of the finiteness condition).

For a given set $B\in\calB( X)$ and each in-cluster configuration
$\gamma_0^{\myp\prime}$ centred at the origin, consider the set
(referred to as \emph{droplet cluster})
\begin{equation}\label{eq:D}
D_B(\gamma_0^{\myp\prime}):=\bigcup _{y\in\gamma_0^{\myp\prime}}
(B-y),
\end{equation}
which is a set-theoretic union of ``droplets'' of shape $B$ shifted
to the centrally reflected points of $\gamma_0^{\myp\prime}$.

\begin{theorem}\label{th:properClusterPoisson} Let
$\mucl$ be a Poisson cluster measure on the generalized
configuration space $\varGamma_ X^\sharp$.

\textup{(a)} In order that $\mucl$-a.a.\ configurations $\gamma\in
\varGamma _{ X}^\sharp$ be locally finite, it is necessary and
sufficient that the following two conditions hold:

{\rm (a-i)} in-cluster configurations\/ $\gamma_0^{\myp\prime}$ are
a.s.\ locally finite, that is, for any compact set
$K\in{\mathcal{B}}(X)$,
\begin{equation}\label{eq:condA1}
\gamma_0^{\myp\prime}(K)<\infty\qquad (\mu_0\text{-a.s.})
\end{equation}

{\rm (a-ii)}  for any compact set $K\in{\mathcal{B}}(X)$, the mean
$\lambda$-measure of the droplet cluster
$D_K(\gamma_0^{\myp\prime})$ is finite,
\begin{equation}\label{eq:condA2}
\int_{\varGamma^\sharp_ X}
\lambda\bigl(D_K(\gamma_0^{\myp\prime})\bigr)\,
\mu_0(\rd\gamma_0^{\myp\prime})<\infty\myp.
\end{equation}

\textup{(b)} In order that $\mucl$-a.a.\ configurations $\gamma\in
\varGamma_{X}^\sharp$ be simple, it is necessary and sufficient that
the following two conditions hold:

{\rm (b-i)} in-cluster configurations $\gamma_0^{\myp\prime}$ are
a.s.\ simple,
\begin{equation}\label{eq:condB1}
\sup_{x\in X}\gamma_0^{\myp\prime}\{x\}\le 1\qquad
(\mu_0\text{-a.s.})
\end{equation}

{\rm (b-ii)} for any $x\in X$, the ``point'' droplet cluster
$D_{\{x\}}(\gamma_0^{\myp\prime})$ has a.s.\ zero $\lambda$-measure,
\begin{equation}\label{eq:condB2}
\lambda\bigl(D_{\{x\}}(\gamma_0^{\myp\prime})\bigr)=0\qquad
(\mu_0\text{-a.s.})
\end{equation}
\end{theorem}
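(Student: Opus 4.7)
The plan is to derive both parts from the Laplace functional (\ref{eq:ClusterPoissonLT}) by specialising to appropriate test functions and moment computations.

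For part (a), I start from the equivalence: for a compact $K$, $\gamma(K)<\infty$ \,$\mucl$-a.s.\ iff $\lim_{s\downarrow 0}L_\mucl[s{\bf 1}_K] = 1$. Inserting $f = s{\bf 1}_K$ into (\ref{eq:ClusterPoissonLT}) and using $\sum_{y\in\gamma_0^{\myp\prime}}{\bf 1}_K(y+x) = \gamma_0^{\myp\prime}(K-x)$ yields $L_\mucl[s{\bf 1}_K] = \exp\{-\Phi(s)\}$ with
\[
\Phi(s) := \int_X\int_{\varGamma_X^\sharp}\bigl(1 - \re^{-s\gamma_0^{\myp\prime}(K-x)}\bigr)\,\mu_0(\rd\gamma_0^{\myp\prime})\,\lambda(\rd x).
\]
Splitting the inner integrand along $\{\gamma_0^{\myp\prime}(K-x)=\infty\}$ versus its complement, the ``infinite'' piece gives the $s$-independent contribution $\int\mu_0\{\gamma_0^{\myp\prime}(K-x)=\infty\}\,\lambda(\rd x)$, which is killed by (a-i) since each $K-x$ is compact; on the complement the integrand is dominated by ${\bf 1}_{\{\gamma_0^{\myp\prime}(K-x)\ge 1\}}$, and by Fubini with the identity $\{x:\gamma_0^{\myp\prime}(K-x)\ge 1\} = D_K(\gamma_0^{\myp\prime})$ from (\ref{eq:D}), its $\mu_0\otimes\lambda$-integral equals $\int\lambda(D_K(\gamma_0^{\myp\prime}))\,\mu_0(\rd\gamma_0^{\myp\prime})$, finite exactly under (a-ii), and dominated convergence gives $\Phi(s)\downarrow 0$. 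Necessity uses the reverse bound $1-\re^{-sc}\ge(1-\re^{-s}){\bf 1}_{\{c\ge 1\}}$ (for $c\in\overline\ZZ_+$) to force $\Phi(s)=\infty$ whenever (a-ii) fails, and a translation trick $K = K^*+B$ (with $B$ compact and $\lambda(B)>0$) to propagate a failure of (a-i) at $K^*$ into a failure of $\lambda$-a.e.\ finiteness of $\gamma_0^{\myp\prime}(K-x)$, since $K-x\supseteq K^*$ for $x\in B$.

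For part (b), $\gamma$ is simple iff $E_\mucl[\sum_y\gamma\{y\}(\gamma\{y\}-1)] = 0$ (the summand being nonnegative and vanishing everywhere iff $\gamma\{y\}\le 1$ for all $y$). Decomposing $\gamma\{y\} = \sum_{x\in\gamma_{\rm c}}\gamma_x^{\myp\prime}\{y\}$ and expanding the quadratic splits this expectation into a within-cluster ($x=x'$) part and a cross-cluster ($x\ne x'$) part. By the first-order Campbell formula for $\gamma_{\rm c}\sim\pi_\lambda$ together with the change of variable $z = y-x$, the within-cluster part equals $\lambda(X)\cdot \int[\sum_z\gamma_0^{\myp\prime}\{z\}(\gamma_0^{\myp\prime}\{z\}-1)]\,\mu_0(\rd\gamma_0^{\myp\prime})$, and vanishes iff $\gamma_0^{\myp\prime}$ is $\mu_0$-a.s.\ simple, i.e., (b-i). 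Assuming (b-i), the Mecke identity for $\gamma_{\rm c}$ (adding a typical centre $x$ with an independent cluster $\gamma_0^{\myp\prime}+x$) combined with the marking theorem identifies the inner ``number of other clusters containing $y$'' as a Poisson count of mean $\int\lambda(D_{\{y\}}(\gamma_0^{\myp\prime}))\,\mu_0(\rd\gamma_0^{\myp\prime})$. The cross-cluster part thus becomes an iterated integral of this Poisson mean over $\lambda(\rd x)$, $\mu_0(\rd\gamma_0^{\myp\prime})$, and $y\in\gamma_0^{\myp\prime}+x$, and condition (b-ii)---that $\lambda(D_{\{y\}}(\gamma_0^{\myp\prime}))=0$ \,$\mu_0$-a.s.\ for each $y$---is precisely what makes the inner Poisson mean vanish.

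The hardest step will be the necessity directions. For (a-i), a $\mu_0$-pathology localised at a single compact $K^*$ must be converted into a failure of $\mucl$-a.s.\ local finiteness via the translation argument outlined above. For (b-ii), the converse direction is subtle when $\lambda$ or $\rho_0 := \int\gamma_0^{\myp\prime}(\cdot)\,\mu_0(\rd\gamma_0^{\myp\prime})$ have atomic components and requires disentangling the interplay between the atoms of $\lambda$ and of $\rho_0$. All Fubini/Mecke exchanges also require careful justification by monotonicity, since $\lambda$ is only $\sigma$-finite.
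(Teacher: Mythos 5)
For part (a) your argument is essentially the paper's own: it too tests the Laplace functional on $f=s\myp{\bf 1}_K$ (written there as $-\ln q\cdot{\bf 1}_K$ with $q=\re^{-s}\uparrow 1$), and its expansion of $1-q^{\gamma_0^{\myp\prime}(K-x)}$ over the layers $D_K^{\myp\ell}=\{x:\gamma_0^{\myp\prime}(K-x)=\ell\}$, sandwiched between $(1-q)\myp\lambda(D_K)$ and $\lambda(D_K)$, is exactly your pair of bounds $(1-\re^{-s})\myp{\bf 1}_{\{c\ge1\}}\le 1-\re^{-sc}\le{\bf 1}_{\{c\ge1\}}$. The paper dismisses the necessity of (a-i) with ``clearly''; your translation trick $K=K^*+B$, $\lambda(B)>0$, is a correct way to make that precise (it needs $\lambda\not\equiv0$, as does the statement itself, and inner regularity supplies a compact $B$ of positive measure).

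Part (b) is where you genuinely diverge. The paper proves necessity by testing the Laplace functional at a single point, $f=-\ln q\cdot{\bf 1}_{\{x_0\}}$: simplicity forces $\EE\myp[q^{\gamma\{x_0\}}]$ to be affine in $q$, which kills $\int\lambda\bigl(D_{\{x_0\}}^{\myp\ell=1}(\gamma_0^{\myp\prime})\bigr)\mu_0(\rd\gamma_0^{\myp\prime})$ while the layers $\ell\ge2$ vanish directly; it proves sufficiency by conditioning on the number of centres in a window $\varLambda$ with $\lambda(\varLambda)<\infty$ and bounding the probability of a tie between one pair of independent clusters by $\sum_{y_1\in\gamma_1}\int_\varLambda\lambda\bigl(D_{\{x_1+y_1\}}(\gamma_2)\bigr)\lambda(\rd x_1)=0$. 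Your second-factorial-moment identity $\EE\bigl[\sum_y\gamma\{y\}(\gamma\{y\}-1)\bigr]=0$, split into within- and cross-cluster parts via Campbell/Mecke (both parts nonnegative, so the total vanishes iff each does, and Tonelli covers all interchanges under $\sigma$-finiteness), handles both directions in a single computation and is a sound alternative, backed by the paper's own marked-Poisson representation of Section 3.3. The one step you flag as open --- necessity of (b-ii) at \emph{every} $x\in X$ rather than at first-moment-typical points --- does close inside your framework, and without any case analysis on atoms: setting $m(y):=\int\lambda\bigl(D_{\{y\}}(\gamma_0^{\myp\prime})\bigr)\mu_0(\rd\gamma_0^{\myp\prime})$, Fubini shows that under (b-i) $m(y)$ is exactly the atom at $y$ of the first-moment measure of the cluster process, and retaining from the cross-cluster sum only the pairs in which each of the two clusters covers a fixed $y_0$ gives the lower bound $m(y_0)^2$ for the cross part. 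A vanishing cross part therefore forces $m\equiv0$, i.e.\ (b-ii) everywhere; this is the moment-method counterpart of the paper's pointwise Laplace-functional test, and with it your plan is complete.
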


The proof of Theorem \ref{th:properClusterPoisson} is deferred to
the Appendix (Section~\ref{app1}).

Let us discuss the conditions of properness. First of all, the
interesting question is whether the local finiteness of the Poisson
CPP is compatible with the possibility that the number of points  in
a  cluster, $\nu_0=\gamma_0^{\myp\prime}( X)$, is infinite
(see~(\ref{pn})). The next proposition describes a simple situation
where this is not the case.
\begin{proposition} Let both conditions \textup{(a-i)} and
\textup{(a-ii)} be satisfied, and suppose that for any compact set
$K\in\calB(X)$, the $\lambda$-measure of its translations is
uniformly bounded from below,
\begin{equation}\label{sigma-cond-below}
c_K:=\inf_{x\in X} \lambda(K+x)>0.
\end{equation}
Then $\nu_0<\infty$ \textup{(}$\mu_0$-a.s.\textup{)}.
\end{proposition}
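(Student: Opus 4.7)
The plan is to argue by contradiction: suppose $\mu_0\{\nu_0 = \infty\} > 0$, and derive that $\int \lambda(D_K(\gamma_0^{\myp\prime}))\,\mu_0(\rd\gamma_0^{\myp\prime}) = \infty$ for a suitable compact $K$, violating condition (a-ii) in Theorem~\ref{th:properClusterPoisson}. Throughout, I would work on the full-measure event on which the in-cluster configuration $\gamma_0^{\myp\prime}$ is locally finite, as guaranteed by~(a-i).

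The first—and key—step is a greedy extraction of a well-separated infinite subfamily of cluster points. Fix any $r > 0$ and pick an arbitrary $y_{i_1} \in \gamma_0^{\myp\prime}$. Given $y_{i_1},\dots,y_{i_k}$, form the finite union of closed balls $A_k := \bigcup_{j=1}^{k} \overline{B(y_{i_j}, 2r)}$, which is compact. By local finiteness, $\gamma_0^{\myp\prime}(A_k) < \infty$, so on the event $\{\nu_0 = \infty\}$ there exists $y_{i_{k+1}} \in \gamma_0^{\myp\prime}\setminus A_k$. Iterating produces an infinite subfamily $\{y_{i_k}\}_{k \ge 1}$ satisfying $|y_{i_k} - y_{i_l}| > 2r$ for all $k \neq l$.

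Setting $K := \overline{B(0,r)}$, the translates $K - y_{i_k} = \overline{B(-y_{i_k}, r)}$ are pairwise disjoint because the centres $\{-y_{i_k}\}$ remain $2r$-separated. All these closed balls lie inside the droplet cluster $D_K(\gamma_0^{\myp\prime})$ by definition~(\ref{eq:D}), and invoking the hypothesis $c_K > 0$ from~(\ref{sigma-cond-below}) yields
\begin{equation*}
\lambda\bigl(D_K(\gamma_0^{\myp\prime})\bigr) \ge \sum_{k=1}^{\infty} \lambda(K - y_{i_k}) \ge \sum_{k=1}^{\infty} c_K = \infty.
\end{equation*}
Integrating over the event $\{\nu_0 = \infty\}$, which has positive $\mu_0$-measure by assumption, gives $\int \lambda(D_K(\gamma_0^{\myp\prime}))\,\mu_0(\rd\gamma_0^{\myp\prime}) = \infty$, contradicting~(\ref{eq:condA2}) and forcing $\mu_0\{\nu_0 < \infty\} = 1$. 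The only real obstacle is the greedy extraction of the $2r$-separated subfamily; this step relies crucially on (a-i), and would fail if in-cluster points were allowed to accumulate.
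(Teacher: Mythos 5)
Your proof is correct and follows essentially the same route as the paper: use local finiteness (a-i) to extract an infinite family of in-cluster points whose translates $K-y_j$ are pairwise disjoint, then combine the uniform lower bound $c_K>0$ with (a-ii) to rule out infinite clusters. The only difference is that you spell out the greedy $2r$-separation argument and fix $K$ to be a closed ball, details the paper leaves implicit.
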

\proof Suppose that $\gamma_0^{\myp\prime}$ is an infinite
configuration. Due to (a-i), $\gamma_0^{\myp\prime}$ must be locally
finite ($\mu_0$-a.s.), which implies that there is an infinite
subset of points $y_j\in\gamma_0^{\myp\prime}$ such that the sets
$K-y_j$ are disjoint ($j\in\NN$). Hence, using
(\ref{sigma-cond-below}) we get
$$
\lambda\bigl(D_K(\gamma_0^{\myp\prime})\bigr)\ge \sum_{j=1}^\infty
\lambda(K-y_j)=\infty,
$$
which, according to condition (a-ii), may occur only with zero
probability.
\endproof

On the other hand, it is easy to construct examples of locally
finite Poisson CPPs with a.s.-infinite clusters.

\begin{example}\normalfont
Let $X=\RR^d$ and choose a measure $\lambda$ such that, for any
compact set $K\subset\RR^d$,  $\lambda(K-x)\sim C_d\,\lambda(K)\mypp
|x|^{-\alpha}$ as $x\to \infty$, where $\alpha>0$ (e.g., take
$\lambda(\rd x)=(1+|x|)^{-\alpha-d+1}\,\rd x$). Suppose now that the
in-cluster configurations $\gamma_0^{\myp\prime}=\{x_n\}$ are such
that $n^{2/\alpha}<|x_n|\le (n+1)^{2/\alpha}$, $n\in\NN$
\,($\mu_0$-a.s.). Then for any compact set~$K$
$$
\lambda\bigl(D_K(\gamma_0^{\myp\prime})\bigr)\le \sum_{x_n\in
\gamma_0^{\myp\prime}} \lambda(K-x_n)<\infty,
$$
because $\lambda(K-x_n)\sim  C_d\,\lambda(K)\myp
|x_n|^{-\alpha}=O(n^{-2})$ as $n\to\infty$.
\end{example}

It is easy to give conditions sufficient for (a-ii). The first set
of conditions below is expressed in terms of the intensity measure
$\lambda$ and the mean number of points in a cluster, while the
second condition focuses on the location of in-cluster points.

\begin{proposition}\label{pr:a1} Suppose that $\nu_0<\infty$
\textup{(}$\mu_0$-a.s.\textup{)}. Then either of the following
conditions is sufficient for condition \textup{(a-ii)} in Theorem
\textup{\ref{th:properClusterPoisson}}.

\textup{(a-ii$'$)} For any compact set $K\in\calB(X)$, the
$\lambda$-measure of its translations is uniformly bounded from
above,
\begin{equation}\label{sigma-cond}
C_K:=\sup_{x\in X} \lambda(K+x)<\infty,
\end{equation}
and, moreover, the mean number of in-cluster points is finite,
\begin{equation}\label{sigma-cond*}
\int_{\varGamma_{ X}^\sharp}
\gamma_0^{\myp\prime}(X)\,\mu_0(\rd\gamma_0^{\myp\prime}) =\sum
_{n\in\overline{\ZZ}_+} n\myp p_n<\infty
\end{equation}
\textup{(}\/this necessarily implies that
$p_\infty=0$\myp\textup{)}.

\textup{(a-ii$''$)} In-cluster configuration $\gamma_0^{\myp\prime}$
as a set in $X$ is $\mu_0$-a.s.\ bounded, that is, there exists a
compact set $K_0\in\calB(X)$ such that $\gamma_0^{\myp\prime}\subset
K_0$ \textup{(}$\mu_0$-a.s.\textup{)}.
\end{proposition}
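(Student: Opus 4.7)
The plan is to control $\lambda\bigl(D_K(\gamma_0^{\myp\prime})\bigr)$ in each case by an explicit bound and then integrate against $\mu_0$. The universal starting point is the countable subadditivity of $\lambda$ applied to the definition (\ref{eq:D}) of the droplet cluster, which gives
\begin{equation*}
\lambda\bigl(D_K(\gamma_0^{\myp\prime})\bigr)
= \lambda\Biggl(\,\bigcup_{y \in \gamma_0^{\myp\prime}}(K-y)\Biggr)
\leq \sum_{y \in \gamma_0^{\myp\prime}} \lambda(K-y).
\end{equation*}
Under the standing assumption $\nu_0 < \infty$ ($\mu_0$-a.s.), this is a $\mu_0$-a.s.\ finite sum (in the number of terms).

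Under condition (a-ii$'$), I would bound each summand uniformly by $\lambda(K-y) = \lambda(K+(-y)) \le C_K$, obtaining $\lambda\bigl(D_K(\gamma_0^{\myp\prime})\bigr) \le C_K\, \gamma_0^{\myp\prime}(X)$ $\mu_0$-a.s. Integrating against $\mu_0$ and invoking (\ref{sigma-cond*}) yields condition (a-ii) directly. I would also remark in passing that (\ref{sigma-cond*}) forces $p_\infty = 0$, for otherwise the sum $\sum_n n\myp p_n$ would already diverge because $\infty \cdot p_\infty = \infty$.

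Under condition (a-ii$''$), I would avoid the per-point bound altogether and instead enclose the whole droplet cluster inside a single deterministic compact set. Since $\gamma_0^{\myp\prime} \subset K_0$ with $\mu_0$-probability one,
\begin{equation*}
D_K(\gamma_0^{\myp\prime}) = \bigcup_{y \in \gamma_0^{\myp\prime}}(K-y) \;\subset\; \bigcup_{y \in K_0}(K-y) \;=\; K - K_0,
\end{equation*}
which is compact in $X = \RR^d$ as the continuous image of $K \times K_0$ under subtraction. Finiteness of $\lambda$ on compacts then gives the $\gamma_0^{\myp\prime}$-independent bound $\lambda\bigl(D_K(\gamma_0^{\myp\prime})\bigr) \le \lambda(K-K_0) < \infty$, which is trivially $\mu_0$-integrable.

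The main (minor) subtlety is the implicit use in case (a-ii$''$) of local finiteness of $\lambda$ on compact sets. This is harmless in the present context: by Proposition \ref{pr:properPoisson}(a) applied to the Poisson centre process $\gamma_{\mathrm{c}}$, local finiteness of $\lambda$ is itself necessary for $\mucl$-a.s.\ local finiteness of $\gamma$, so the case in which $\lambda(K-K_0) = \infty$ is outside the regime in which condition (a-ii) can be expected to hold anyway. I do not anticipate any substantive obstacle.
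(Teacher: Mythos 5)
Your proof is correct and follows essentially the same route as the paper: subadditivity plus the uniform bound $\lambda(K-y)\le C_K$ gives $\lambda\bigl(D_K(\gamma_0^{\myp\prime})\bigr)\le C_K\mypp\nu_0$ in case (a-ii$'$), and the enclosure $D_K(\gamma_0^{\myp\prime})\subset K-K_0$ with $K-K_0$ compact handles case (a-ii$''$). Your added remark on the implicit use of $\lambda(K-K_0)<\infty$ is a point the paper silently assumes, and your handling of it is reasonable.
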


\proof From (\ref{eq:D}) and (\ref{sigma-cond}) we obtain
$$
\lambda\bigl(D_K(\gamma_0^{\myp\prime})\bigr)\le
\sum_{y\in\gamma_0^{\myp\prime}} \lambda(K-y)\le C_K\myp
\gamma_0^{\myp\prime}( X)=C_K\mypp \nu_0,
$$
and condition (a-ii) follows by (\ref{sigma-cond*}),
$$
\int_{\varGamma_
X^\sharp}\lambda\bigl(D_K(\gamma_0^{\myp\prime})\bigr)
\,\mu_0(\rd\gamma_0^{\myp\prime}) \le C_K \int_{\varGamma_{
X}^\sharp} \gamma_0^{\myp\prime}(
X)\,\mu_0(\rd\gamma_0^{\myp\prime})<\infty.
$$

If condition (a-ii$''$) holds then
$$
D_K(\gamma_0^{\myp\prime})\subset \bigcup_{y\in K_0}(K-y)=:K-K_0,
$$
where the set $K-K_0$ is compact. Therefore,
$$
\int_{\varGamma_
X^\sharp}\lambda\bigl(D_K(\gamma_0^{\myp\prime})\bigr)
\,\mu_0(\rd\gamma_0^{\myp\prime}) \le\lambda(K-K_0)\int_{\varGamma_
X^\sharp}\mu_0(\rd\gamma_0^{\myp\prime})= \lambda(K-K_0)<\infty,
$$
and condition (a-ii) follows.
\endproof

The impact of conditions (a-ii$'$) and (a-ii$''$) is clear:
(a-ii$'$) imposes a bound on the \emph{number} of points which can
be contributed from remote clusters, while (a-ii$''$) restricts the
\emph{range} of such contribution.

Similarly, one can work out simple sufficient conditions for (b-ii).
The first condition below is set in terms of the measure $\lambda$,
whereas the second one exploits the in-cluster distribution $\mu_0$.

\begin{proposition}\label{pr:a2}
Suppose that $\nu_0<\infty$ \textup{(}$\mu_0$-a.s.\textup{)}. Then
either of the following conditions is sufficient for condition
\textup{(b-ii)} of Theorem \textup{\ref{th:properClusterPoisson}}.

\textup{(b-ii$'$)} The measure $\lambda$ is non-atomic, that is,
$\lambda\{x\}=0$\myp{} for each $x\in X$.

\textup{(b-ii$''$)} In-cluster configurations
$\gamma_0^{\myp\prime}$ have no fixed points, that is,
$\mu_0\{\gamma_0^{\myp\prime}\in\varGamma_
X^\sharp:x\in\gamma_0^{\myp\prime}\}=0$\myp{} for each $x\in X$.
\end{proposition}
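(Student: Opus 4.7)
The plan is to verify condition (b-ii) under each of (b-ii$'$) and (b-ii$''$) separately, exploiting the elementary description
$$D_{\{x\}}(\gamma_0^{\myp\prime})=\bigcup_{y\in\gamma_0^{\myp\prime}}\{x-y\}$$
that follows from definition~(\ref{eq:D}). Since $\nu_0<\infty$ ($\mu_0$-a.s.), this union is $\mu_0$-a.s.\ finite, so on a set of full $\mu_0$-measure, $D_{\{x\}}(\gamma_0^{\myp\prime})$ consists of finitely many points.

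Under hypothesis (b-ii$'$), the argument is immediate: if $\lambda$ is non-atomic then $\lambda\{x-y\}=0$ for every $y\in X$, whence by countable subadditivity
$$\lambda\bigl(D_{\{x\}}(\gamma_0^{\myp\prime})\bigr)\le \sum_{y\in\gamma_0^{\myp\prime}}\lambda\{x-y\}=0\qquad(\mu_0\text{-a.s.}),$$
which is exactly (b-ii).

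Under hypothesis (b-ii$''$), my plan is to invoke Fubini--Tonelli via the equivalence $z\in D_{\{x\}}(\gamma_0^{\myp\prime})\Leftrightarrow x-z\in\gamma_0^{\myp\prime}$. The $\sigma$-finiteness of $\lambda$, combined with $\mu_0$ being a probability measure, yields (granted joint measurability, as discussed below)
$$\int_{\varGamma_X^\sharp}\lambda\bigl(D_{\{x\}}(\gamma_0^{\myp\prime})\bigr)\mypp\mu_0(\rd\gamma_0^{\myp\prime})=\int_X\mu_0\{\gamma_0^{\myp\prime}\in\varGamma_X^\sharp:x-z\in\gamma_0^{\myp\prime}\}\mypp\lambda(\rd z).$$
For every fixed $z\in X$, hypothesis (b-ii$''$) (applied at the point $x-z\in X$) forces the inner $\mu_0$-measure to vanish, so the outer integral equals zero. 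Hence $\lambda(D_{\{x\}}(\gamma_0^{\myp\prime}))=0$ for $\mu_0$-a.a.\ $\gamma_0^{\myp\prime}$, as required.

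The only delicate point is the joint measurability of ${\bf 1}\{x-z\in\gamma_0^{\myp\prime}\}$ needed for Fubini. Since $\mu_0$ is concentrated on finite configurations (thanks to $\nu_0<\infty$ a.s.), the question can be pulled back to $\bigsqcup_{n\in\ZZ_+}X^n$ via the projection $\mathfrak{p}$ of~(\ref{eq:pr}): on $X^n$ the condition $x-z\in\gamma_0^{\myp\prime}=\mathfrak{p}(\bar{y})$ reads $\bigcup_{i=1}^n\{y_i=x-z\}$, a finite union of slices of the diagonal $D\in\calB(X^2)$ from~(\ref{eq:diag}), and is therefore manifestly Borel in the product.
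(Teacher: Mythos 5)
Your proposal is correct and follows essentially the same route as the paper: non-atomicity plus countable subadditivity for (b-ii$'$), and a Fubini--Tonelli interchange reducing (b-ii$''$) to the vanishing of $\mu_0\{\gamma_0^{\myp\prime}: x-z\in\gamma_0^{\myp\prime}\}$ for (b-ii$''$). The only additions are your explicit measurability check (which the paper leaves implicit) and a corrected sign ($x-z$ versus the paper's $z-x$), neither of which changes the argument.
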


\proof Condition (b-ii$'$) readily implies (b-ii):
$$
0\le \lambda\bigl(D_{\{x\}}(\gamma_0^{\myp\prime})\bigr)\le
\sum_{y\in\gamma_0^{\myp\prime}} \lambda\{x-y\}=0.
$$
Further, if condition (b-ii$''$) holds then
\begin{align}
\notag \int_{\varGamma_ X^\sharp}
\lambda\bigl(D_{\{x\}}(\gamma_0^{\myp\prime})\bigr)\,\mu_0(\rd
\gamma_0^{\myp\prime}) &=\int_{ X}\biggl(\mynn\int_{\varGamma_
X^\sharp}{\bf
1}_{\cup_{y\in\gamma_0^{\myp\prime}}\{x-y\}}(z)\,\mu_0(\rd
\gamma_0^{\myp\prime}) \biggr)\,\lambda(\rd z)\\
\notag
&=\int_{ X}\biggl(\mynn\int_{\varGamma_ X^\sharp}{\bf
1}_{\gamma_0^{\myp\prime}}(z-x)\,\mu_0(\rd \gamma_0^{\myp\prime})
\biggr)\,\lambda(\rd z)\\
\label{eq:b-ii}
&=\int_{ X} \mu_0\{\gamma_0^{\myp\prime}\in\varGamma_
X^\sharp: z-x\in\gamma_0^{\myp\prime}\} \,\lambda(\rd z)=0,
\end{align}
and condition (b-ii) follows.
\endproof

\section{Poisson cluster processes via Poisson measures}\label{sec:3}

In this section, we construct an auxiliary Poisson measure
$\pi_{\lambda^\star}$\ on the ``vector'' configuration space
$\frakX$ and prove that the Poisson cluster measure $\mucl$
coincides with the projection of $\pi_{\lambda^\star}$ onto the
configuration space $\varGamma^\sharp_X$ (Theorem~\ref{th:mucl}).
This furnishes a useful description of Poisson cluster measures that
will enable us to apply to their study the well-developed calculus
on Poisson configuration spaces.

\subsection{An auxiliary intensity measure $\lambda^\star$}\label{n-clusters}

Recall that the space $\frakX=\bigsqcup_{n\in\overline{\ZZ}_+}\mynn
X^n$ of finite or infinite vectors $\bar{x}=(x_1,x_2,\dots)$ was
introduced in Section \ref{sec:General} The probability distribution
$\mu_0$ of a generic cluster $\gamma_0^{\myp\prime}$ centred at the
origin (see Section~\ref{sec:Poisson-cluster}) determines a
probability measure $\eta$ in $\mathfrak{X}$ which is symmetric with
respect to permutations of coordinates. Conversely, $\mu_0$ is a
push-forward of the measure $\eta$ under the projection mapping
$\mathfrak{p}:\mathfrak{X}\to\varGamma^\sharp_X$ defined by
(\ref{eq:pr}), that is,
\begin{equation}\label{eq:p*eta}
\mu_0=\mathfrak{p}^*\eta\equiv\eta\circ\mathfrak{p}^{-1}.
\end{equation}

Conditional measure induced by $\eta$ on the space $X^n$ via the
condition $\gamma_0^{\myp\prime}(X)=n$ will be denoted $\eta_n$
($n\in\overline{\ZZ}_+$); in particular,
$\eta_0=\delta_{\{\emptyset\}}$. Hence (recall~(\ref{pn})),
\begin{equation}\label{eq:eta-eta}
\eta({B})=\sum_{n\in\overline{\ZZ}_+}\mynn p_n\eta_n({B}\cap X^n),
\qquad {B} \in\calB(\frakX).
\end{equation}
Note that if $p_n=\eta\{\gamma_0^{\myp\prime}(X)=n\}=0$ then
$\eta_n$ is not well defined; however, this is immaterial since the
corresponding term vanishes from the sum (\ref{eq:eta-eta}) (cf.\
also the decomposition (\ref{eq:star}) below).

The following definition is fundamental for our construction.
\begin{definition}\label{def:lambda*}
\normalfont We introduce the measure $\lambda^{\star}$ on $\frakX$
as a special ``convolution'' of the measures $\eta$ and
$\lambda$\myp:
\begin{equation}\label{eq:sigma*}
\lambda^{\star}({B}):=\int_{ X} \eta({B}-x)\,\lambda(\rd x),\qquad
{B}\in\calB(\frakX);
\end{equation}
equivalently, if $\mathrm{M}_+(\frakX)$ is the set of all
non-negative measurable functions on $\frakX$ then, for any
$f\in\mathrm{M}_+(\frakX)$,
\begin{equation}\label{eq:int-sigma-n}
\int_{\frakX}f(\bar{y})\,\lambda^{\star}(\rd\bar{y})= \int_{
X}\left(\int_{\frakX}f(\bar{y}+x)\,\eta(\rd\bar{y})
\right)\lambda(\rd x).
\end{equation}
Here and below, we use the shift notation
\begin{align*}
&\bar{y}+x:=(y_1+x,y_2+x,\dots),\qquad \bar{y}=(y_1,y_2,\dots)\in
\frakX, \quad x\in X.
\end{align*}
\end{definition}

Using the decomposition (\ref{eq:eta-eta}), the measure
$\lambda^{\star}$ on $\frakX$ can be represented as a weighted sum
of contributions from the constituent spaces $X^n$:
\begin{equation}\label{eq:star}
\lambda^{\star}({B})=\sum _{n\in\overline{\ZZ}_+} \mynn p_n\myp
\lambda^{\star}_{n}({B}\cap X^n),\qquad {B} \in\calB(\frakX),
\end{equation}
where, for each $n\in\overline{\ZZ}_+$,
\begin{equation}\label{mu-measure-n}
\lambda^{\star}_{n}(B_n):=\int_{X} \eta_n(B_n-x)\,\lambda(\rd
x),\qquad B_n\in\calB(X^{n}).
\end{equation}

\begin{remark}[Case $n=0$]\normalfont
Recall that $X^0=\{\emptyset\}$ and $\mathcal{B}(X^0)=\{\emptyset,
X^0\}=\{\emptyset, \{\emptyset\}\}$. Since $\emptyset-x=\emptyset$,
$\{\emptyset\}-x=\{\emptyset\}$ ($x\in X$) and
$\eta_0=\delta_{\{\emptyset\}}$, formula (\ref{mu-measure-n}) for
$n=0$ must be interpreted as follows:
\begin{equation}\label{eq:sigma0}
\begin{aligned} \lambda^{\star}_{0}(\emptyset)&=\int_{X}
\eta_0(\emptyset)\,\lambda(\rd x) = 0,\\
\lambda^{\star}_{0}(\{\emptyset\})&=\int_{X}
\eta_0(\{\emptyset\})\,\lambda(\rd x) =\int_{X} \lambda(\rd x) =
\lambda(X)=\infty.
\end{aligned}
\end{equation}
\end{remark}

If $p_\infty=0$ (i.e., clusters are a.s.\ finite) and $X=\RR^d$,
then in order that the measure $\eta$ be absolutely continuous
(a.c.) with respect to the ``Lebesgue measure''
$\rd\bar{y}=\delta_{\{\emptyset\}}(\rd\bar{y})\oplus
\bigoplus_{n=1}^\infty \rd{y}_1\mynn\otimes\cdots\otimes\rd{y}_n$ on
$\frakX=\bigsqcup_{n=0}^{\infty}\myn X^n$, with some density $h$,
\begin{equation}\label{eq:m}
\eta(\rd\bar{y})=h(\bar{y})\,\rd\bar{y}, \qquad \bar{y}\in \frakX,
\end{equation}
it is necessary and sufficient that each measure $\eta_n$ is a.c.\
with respect to Lebesgue measure on $X^n$, that is,
$\eta_n(\rd\bar{y})=h_n(\bar{y})\,\rd\bar{y}$, \,$\bar{y}\in X^n$
($n\in\ZZ_+$); in this case, the density $h$ is decomposed as
\begin{equation}\label{eq:h-h}
h(\bar{y})=\sum_{n=0}^\infty p_n \myp h_n(\bar{y})\,{\bf
1}_{X^n}(\bar{y}), \qquad \bar{y}\in\frakX.
\end{equation}
Moreover, it follows that the measures $\lambda^{\star}$ and
$\lambda^{\star}_n$ ($n\in\ZZ_+$) are also a.c., with the
corresponding densities
\begin{equation}\label{density}
\begin{aligned}
s(\bar{y})&=\frac{\lambda^{\star}(\rd\bar{y})}{\rd\bar{y}}=\int_{
X}h(\bar{y}-x)\,\lambda(\rd x),&\hspace{1.5pc}\bar{y}&\in \frakX,\\
s_n(\bar{y})&=\frac{\lambda^{\star}_n(\rd\bar{y})}{\rd\bar{y}}=\int_{
X}h_n(\bar{y}-x)\,\lambda(\rd x),&\hspace{1.5pc}\bar{y}&\in X^n,
\end{aligned}
\end{equation}
related by the equation (cf.\ (\ref{eq:star}), (\ref{eq:h-h}))
\begin{equation}\label{eq:s-sn}
s(\bar{y})=\sum_{n=0}^\infty p_n \myp s_n(\bar{y})\,{\bf
1}_{X^n}(\bar{y}), \qquad \bar{y}\in\frakX.
\end{equation}

\begin{remark}\label{rm:n=1}
\normalfont In the case $n=1$, the definition (\ref{mu-measure-n})
is reduced to
\begin{equation}\label{eq:sigma1}
\lambda^{\star}_{1}(B_1)=\int_ X \eta_1(B_1-x)\,\lambda(\rd x)=
\int_ X \lambda(B_1-x)\,\eta_1(\rd x),\qquad B_1\in\calB(X).
\end{equation}
In particular, if $\lambda$ is translation invariant (i.e.,
$\lambda(B_1-x)=\lambda(B_1)$ for each $B_1\in\mathcal{B}(X)$ and
any $x\in X$), then $\lambda^{\star}_{1}$ coincides with $\lambda$.
\end{remark}

\begin{remark}\label{rm:blowup}
\normalfont There is a possibility that the measure
$\lambda^{\star}_{n}$ defined by (\ref{mu-measure-n}) is not
$\sigma$-finite (even if $\lambda$ is), and moreover,
$\lambda^{\star}_{n}$ may appear to be locally infinite, in that
$\lambda^{\star}_{n}(B)=\infty$ for any compact set $B\subset \RR^n$
with non-empty interior, as in the following example.
\end{remark}

\begin{example}\label{ex:blowup}
\normalfont  Let $X=\RR$, and for $n\ge1$ set
$$
\lambda(\rd x):= \re^{|x|}\,\rd x,\qquad\eta_1(\rd
x):=\frac{|x|\,\rd x}{(x^2+1)^2} \qquad (x\in\RR),
$$
and $ \eta_n(\rd \bar{x}):=\eta_1(\rd
x_1)\otimes\cdots\otimes\eta_1(\rd x_n)$,
\,$\bar{x}=(x_1,\dots,x_n)\in\RR^n$. Note that for $a<b$ and any
$x\notin [a,b]$,
$$
\eta_1[a-x,b-x]= \frac{(b-a)\mypp|a+b-2x|}{2\mypp((a-x)^2+1)\mypp
((b-x)^2+1)}\sim\frac{b-a}{|x|^3}\qquad(x\to\infty),
$$
so, for any rectangle $B=\Cross_{i=1}^{n} [a_i,b_i]\subset\RR^n$
($a_i<b_i$), by (\ref{eq:sigma1}) we obtain
$$
\lambda^{\star}_{1}(B)=\int_{-\infty}^\infty \prod_{i=1}^n
\eta_1[a_i-x,b_i-x]\;\re^{|x|}\,\rd x =\infty.
$$
\end{example}

The next example illustrates a non-pathological situation.

\begin{example}\label{ex:1} \normalfont Let $
X=\mathbb{R}$, and for $n\ge 1$ set
\begin{equation*}
h_n(\bar{y})=\frac{1}{(2\pi)^{n/2}}\,\re^{-\|\bar y\|^2/2},\qquad
\bar{y}=(y_1,\dots,y_n)\in\RR^n,
\end{equation*}
where $\|\cdot\|$ is the usual Euclidean norm in $\RR^n$. Thus,
$\eta_n$ is a standard Gaussian measure on $\RR^{n}$. Assume that
$\lambda$ is the Lebesgue measure on $\RR$, $\lambda(\rd x)=\rd x$.
For $n=1$, from equation (\ref{density}) we obtain
\begin{align*} s_1(y) &=\frac{1}{\sqrt{2\pi}}
\int_{-\infty}^\infty\re^{-(y-x)^{2}/2}\,\rd x =1,
\end{align*}
hence $\lambda^{\star}_{1}=\lambda$, in accord with Remark
\ref{rm:n=1}. If $n=2$ then from (\ref{density}) we get
\begin{align*}
s_2(y_1,y_2) &=\frac{1}{2\pi}
\int_{-\infty}^\infty\re^{-((y_{1}-x)^{2}+(y_{2}-x)^{2})/2}\,\rd x
=\frac{1}{2\sqrt{\pi}}\,\re^{-(y_{1}-y_{2})^{2}/4}.
\end{align*}
Via the orthogonal transformation $z_1=(y_1+y_2)/\sqrt{2}$,
\,$z_2=(y_1-y_2)/\sqrt{2}$, the measure $\lambda^{\star}_2$ is
reduced to
$$
\lambda^{\star}_2(\rd{z}_1,\rd{z}_2)=\frac{1}{2\sqrt{\pi}}\,\re^{-z_2^2/2}\,\rd
z_1\,\rd z_2,
$$
which is a product of the standard Gaussian measure (along the
coordinate axis $z_1$) and the scaled Lebesgue measure $\rd
z_2/\sqrt{2}$. Note that $\lambda^{\star}_2(\RR^2)=\infty$, but any
vertical or horizontal strip of finite width (in coordinates $\bar
y$) has finite $\lambda^{\star}_2$-measure.

In general ($n\ge 2$), integration in (\ref{density}) yields
\begin{align*}
s_n(\bar{y})&=\frac{1}{(\sqrt{2\pi})^{n-1}\sqrt{n}}\,
\exp\left\{-\frac{1}{2}\left(\|\bar{y}\|^2-n^{-1}|y_1+\cdots
+y_n|^2\right)\right\}, \qquad {\bar y}\in\RR^n,
\end{align*}
It is easy to check that after an orthogonal transformation
$\bar{z}=\bar{y}\mypp U$ such that $z_1=n^{-1/2}(y_1+\cdots+y_n)$,
the measure $\lambda^{\star}_n$ takes the form
$$
\lambda^{\star}_n(\rd{\bar z})=\frac{\rd
z_1}{\sqrt{n}}\cdot\frac{1}{(\sqrt{2\pi})^{n-1}}\,\re^{-(z_2^2+\cdots+z_{n}^2)/2}\,
\rd z_2\cdots \rd z_{n}\,,\qquad \bar{z}=(z_1,\dots,z_n).
$$
That is, $\lambda^{\star}_n(\rd\bar{z})$ is a product of the scaled
Lebesgue measure $\rd z_1/\sqrt{n}$ and the standard Gaussian
measure in coordinates $z_2,\dots, z_{n}$. Hence
$\lambda^{\star}_n(\RR^n)\allowbreak=\infty$, but for any coordinate
strip $C_i=\{\bar{y}\in\RR^n: |y_i|\le c\}$ we have
$\lambda^{\star}_n(C_i)<\infty$.
\end{example}

Example \ref{ex:1} can be generalized as follows.

\begin{proposition}\label{pr:projection}
Suppose that $p_\infty=0$ and $X=\RR^d$. For each $n\ge1$, consider
an orthogonal linear transformation $\bar{z}=\bar{y}\mypp U_n$ of
the space $ X^n$ such that
\begin{equation}\label{eq:z1'}
z_1=\frac{y_1+\cdots+y_n}{\sqrt{n}},\qquad
\bar{z}=(z_1,\dots,z_n),\quad \bar{y}=(y_1,\dots,y_n).
\end{equation}
Set\/ $\bar z^{\myp\prime}\!:=(z_2,\dots,z_n)$ and consider the
measures
\begin{align}
\label{eq:eta'} \eta_n^{\myp\prime}(B^{\myp\prime}):={}&\int_ X
\eta_n(\rd z_1,B^{\myp\prime})=\eta_n( X\times
B^{\myp\prime}),&&B^{\myp\prime}\in\calB(X^{n-1}),\\[.3pc]
\label{eq:tilde-sigma} \tilde\lambda_n(B_1|\mypp\bar
z^{\myp\prime}):={}&\int_ X
\lambda\left(\frac{B_1-z_1}{\sqrt{n}}\right)\eta_n(\rd z_1|\mypp
\bar z^{\myp\prime}),&&B_1\in\calB(X),
\end{align}
where $\eta_n(\rd z_1|\mypp \bar z^{\myp\prime})$ is the measure on
$X$ obtained from $\eta_n$ via conditioning on
$\bar{z}^{\myp\prime}$. Then the measure\/ $\lambda^{\star\!}$ can
be decomposed as
\begin{equation}\label{eq:sigmacl}
\lambda^{\star}(\rd\bar z)=p_0\myp\lambda^{\star}_0(\rd \bar
z)+\sum_{n=1}^\infty p_n\myp \tilde\lambda_n(\rd z_1|\mypp\bar
z^{\myp\prime})\, \eta_n^{\myp\prime}(\rd\bar z^{\myp\prime}),
\end{equation}
where $\lambda^{\star}_0$ is defined in \textup{(\ref{eq:sigma0})}.
In particular, if\/ the measure $\lambda$ on $X=\mathbb{R}^d$ is
translation invariant then
\begin{equation}\label{eq:tilde-sigma0}
\lambda^{\star}(\rd\bar z)=p_0\myp\lambda^{\star}_0(\rd \bar
z)+\sum_{n=1}^\infty p_n \frac{\lambda(\rd
z_1)}{n^{d/2}}\,\eta_n^{\myp\prime}(\rd \bar z^{\myp\prime}).
\end{equation}
\end{proposition}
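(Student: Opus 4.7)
The plan is to start from the layered decomposition (\ref{eq:star}) of $\lambda^{\star}$ and transport each constituent measure $\lambda^{\star}_n$ ($n\ge1$) to the $\bar z$-variables via $U_n$; the $n=0$ contribution already appears on the right-hand side of (\ref{eq:sigmacl}) as $p_0\mypp\lambda^{\star}_0$, in accordance with (\ref{eq:sigma0}). The key geometric observation is that the ``diagonal'' vector $(x,x,\dots,x)\in X^n$ (with $X=\RR^d$, componentwise) lies in the one-dimensional (over $\RR^d$) subspace carrying the new coordinate $z_1$, because by (\ref{eq:z1'}) the first column of $U_n$ is the constant vector $(1/\sqrt n,\dots,1/\sqrt n)^T$ and the remaining columns, being orthogonal to this one, annihilate every constant vector. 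Hence $(x,\dots,x)\mypp U_n=(\sqrt n\mypp x,\myp 0,\dots,0)$, so that the shift $\bar y\mapsto \bar y-x$ used in (\ref{mu-measure-n}) becomes, in the $\bar z$-variables, the shift of the first coordinate only: $(z_1,\bar z^{\myp\prime})\mapsto(z_1-\sqrt n\mypp x,\bar z^{\myp\prime})$.

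Transporting (\ref{mu-measure-n}) to the $\bar z$-coordinates and disintegrating $\eta_n(\rd z_1,\rd\bar z^{\myp\prime})=\eta_n(\rd z_1|\myp \bar z^{\myp\prime})\,\eta_n^{\myp\prime}(\rd\bar z^{\myp\prime})$ with $\eta_n^{\myp\prime}$ as in (\ref{eq:eta'}), for any measurable rectangle $B_1\times B^{\myp\prime}\subset X\times X^{n-1}$ I would compute
\begin{align*}
\lambda^{\star}_n(B_1\times B^{\myp\prime})
&=\int_X \eta_n\bigl((B_1-\sqrt n\mypp x)\times B^{\myp\prime}\bigr)\,\lambda(\rd x)\\
&=\int_{B^{\myp\prime}}\biggl(\int_X\eta_n(B_1-\sqrt n\mypp x\myp|\myp\bar z^{\myp\prime})\,\lambda(\rd x)\biggr)\,\eta_n^{\myp\prime}(\rd\bar z^{\myp\prime}),
\end{align*}
where Fubini has been used in the second step. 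To identify the inner integral with $\tilde\lambda_n(B_1|\myp\bar z^{\myp\prime})$ of (\ref{eq:tilde-sigma}), I would rewrite it as a double integral using ${\bf 1}_{B_1-\sqrt n\mypp x}(z_1)={\bf 1}_{(B_1-z_1)/\sqrt n}(x)$ and swap the order of integration once more, producing $\int_X \lambda\bigl((B_1-z_1)/\sqrt n\bigr)\,\eta_n(\rd z_1|\myp\bar z^{\myp\prime})=\tilde\lambda_n(B_1|\myp\bar z^{\myp\prime})$. Weighting by $p_n$ and summing in $n\ge 1$, then adjoining the $n=0$ piece, yields (\ref{eq:sigmacl}); extension from rectangles to general Borel sets in $X\times X^{n-1}$ is standard.

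For the translation-invariant case, any $\sigma$-finite translation-invariant measure on $\RR^d$ is a constant multiple of Lebesgue measure, so $\lambda\bigl((B_1-z_1)/\sqrt n\bigr)=n^{-d/2}\lambda(B_1)$ is independent of $z_1$. Hence integration against $\eta_n(\rd z_1|\myp\bar z^{\myp\prime})$ gives $\tilde\lambda_n(\rd z_1|\myp\bar z^{\myp\prime})=n^{-d/2}\lambda(\rd z_1)$, independent of $\bar z^{\myp\prime}$, and substitution into (\ref{eq:sigmacl}) produces (\ref{eq:tilde-sigma0}).

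The main bookkeeping hurdle is keeping straight that the orthogonal transformation $U_n\in O(n)$ acts on $X^n=(\RR^d)^n$ by its natural componentwise action on the $n$ slots (each a copy of $\RR^d$) rather than on the $dn$ scalar coordinates, and that the ``diagonal'' translation direction along which $\lambda$ is convolved in (\ref{mu-measure-n}) is precisely the $\bar z$-direction of $z_1$. Once this is absorbed, the rest is two applications of Fubini together with the explicit change of variable $z_1\leftrightarrow -\sqrt n\mypp x$.
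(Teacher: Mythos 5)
Your proposal is correct and follows essentially the same route as the paper's proof: the same observation that the diagonal shift $\bar y\mapsto\bar y-x$ becomes $(z_1,\bar z^{\myp\prime})\mapsto(z_1-\sqrt{n}\mypp x,\bar z^{\myp\prime})$ under $U_n$, followed by disintegration of $\eta_n$, two applications of Fubini with the identity ${\bf 1}_{B_1-x\sqrt{n}}(z_1)={\bf 1}_{(B_1-z_1)/\sqrt{n}}(x)$, and summation over $n$ via (\ref{eq:star}). Your explicit appeal to the Haar-measure characterization in the translation-invariant case is a slightly more careful justification of the scaling $\lambda((B_1-z_1)/\sqrt{n})=n^{-d/2}\lambda(B_1)$ than the paper gives, but the argument is otherwise identical.
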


\proof For a fixed $n\ge1$, let $\bar{z}=\bar{y}\mypp U_n$ and
consider a Borel set in $X^n$ of the form $B_n=\{\bar{y}\in X^n:
z_1\in B_1, \,\bar{z}^{\myp\prime}\!\in B^{\myp\prime}_n\}$. By
equation (\ref{eq:z1'}) and orthogonality of $U_n$, we have
$B_n-x=\{\bar{z}\in X^n: z_1\in B_1-x\sqrt{n},
\,\bar{z}^{\myp\prime}\!\in B^{\myp\prime}_n\}$. Therefore, from
(\ref{mu-measure-n}) we obtain
\begin{align*} \lambda^{\star}_{n}(B_n)
&=\int_ X\left(\int_{ X^n}  {\bf 1}_{(B_1-x\sqrt{n}\myp)\times
B^{\myp\prime}_n}(\bar z)\,\eta_n(\rd\bar z)\right)\lambda(\rd x)\\
&=\int_{ X^n}\left(\int_ X {\bf
1}_{B_1-x\sqrt{n}}\myp(z_1)\,\lambda(\rd x)\right)
{\bf 1}_{B^{\myp\prime}_n}(\bar z^{\myp\prime})\,\eta_n(\rd\bar z)\\
&=\int_{ X\times  X^{n-1}}\left( \int_ X {\bf
1}_{(B_1-z_1)/\sqrt{n}\,}(x)\,\lambda(\rd x)\right){\bf
1}_{B^{\myp\prime}_n}(\bar z^{\myp\prime})\,\eta_n(\rd z_1\myp|\mypp
\bar z^{\myp\prime})\,
\eta_n^{\myp\prime}(\rd\bar z^{\myp\prime})\\
&=\int_{B^{\myp\prime}_n}\! \left(\int_ X
\lambda\left((B_1-z_1)/\sqrt{n}\,\right) \eta_n(\rd z_1\myp|\mypp
\bar z^{\myp\prime})\right)
\eta_n^{\myp\prime}(\rd\bar z^{\myp\prime})\\
&=\int_{B^{\myp\prime}_n}\tilde\lambda_n(B_1|\mypp\bar
z^{\myp\prime})\, \eta_n^{\myp\prime}(\rd\bar z^{\myp\prime}),
\end{align*}
and by inserting this into equation (\ref{eq:star}) we get
(\ref{eq:sigmacl}). Finally, the translation invariance of $\lambda$
implies that
$\lambda((B_1-z_1)/\sqrt{n}\,)=n^{-d/2}\myp\lambda(B_1)$. Formula
(\ref{eq:tilde-sigma}) then gives $\tilde\lambda_n(B_1|\mypp\bar
z^{\myp\prime})=n^{-d/2}\myp\lambda(B_1)$, and
(\ref{eq:tilde-sigma0}) readily follows from (\ref{eq:sigmacl}).
\endproof

Using decomposition (\ref{eq:sigmacl}), it is easy to obtain the
following criterion of absolute continuity of the measure
$\lambda^\star$.
\begin{corollary}\label{cor:sigma*a.c.}
Suppose that $p_\infty=0$ and $X=\RR^d$. Then the measure
$\lambda^\star(\rd\bar{x})$ on $\frakX$ is a.c.\ with respect to the
Lebesgue measure
$\rd\bar{x}=\delta_{\{\emptyset\}}(\rd\bar{x})\oplus\bigoplus_{n=1}^\infty
\rd{x}_1\mynn\otimes\cdots\otimes\rd{x}_n$ if and only if the
following two conditions hold:
\begin{enumerate}
\item[\rm (i)] for each $n\ge1$, the measure
$\eta_n^{\myp\prime}(\rd\bar{z}^{\myp\prime})$ is a.c.\ with respect
to the Lebesgue measure $\rd\bar{z}^{\myp\prime}$ on $
X^{n-1}$\/\textup{;}
\item[\rm (ii)] for a.a.\ $\bar{z}^{\myp\prime}$, the measure $\tilde\lambda_n(\rd
z_1|\mypp\bar z^{\myp\prime})$ is a.c.\ with respect to the Lebesgue
measure $\rd z_1$ on $ X$.
\end{enumerate}
In particular, if\/ $\lambda$ is translation invariant then
condition \textup{(ii)} is automatically fulfilled and hence
condition \textup{(i)} alone is necessary and sufficient for the
absolute continuity of\/ $\lambda^\star$.
\end{corollary}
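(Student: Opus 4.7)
The plan is to exploit the decomposition (\ref{eq:sigmacl}) of $\lambda^{\star}$ and reduce the question of absolute continuity on $\frakX$ to absolute continuity of each component on $X^n$ in the rotated coordinates $\bar z = \bar y\myp U_n$. Since $\frakX = \bigsqcup_{n\in\overline{\ZZ}_+} X^n$ and the Lebesgue measure on $\frakX$ is the corresponding direct sum $\delta_{\{\emptyset\}}\oplus \bigoplus_{n\ge1}\rd z_1\otimes\cdots\otimes\rd z_n$, the measure $\lambda^\star$ is a.c.\ with respect to Lebesgue measure if and only if each summand $p_n\myp\tilde\lambda_n(\rd z_1|\mypp\bar z^{\myp\prime})\,\eta_n^{\myp\prime}(\rd\bar z^{\myp\prime})$ is a.c.\ with respect to Lebesgue measure on $X^n$. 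The $n=0$ term is trivially a.c., and $n=\infty$ does not contribute since $p_\infty=0$. Because $U_n$ is orthogonal, Lebesgue measure on $X^n$ is invariant under the change of coordinates, so the question reduces, for each $n\ge 1$ with $p_n>0$, to whether the disintegrated measure $\tilde\lambda_n(\rd z_1|\mypp\bar z^{\myp\prime})\,\eta_n^{\myp\prime}(\rd\bar z^{\myp\prime})$ on $X\times X^{n-1}$ is a.c.\ with respect to $\rd z_1\otimes \rd\bar z^{\myp\prime}$.

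I would then apply the standard disintegration criterion for absolute continuity of a measure of the form $\kappa(\rd z_1|\mypp\bar z^{\myp\prime})\,\nu(\rd\bar z^{\myp\prime})$. For sufficiency, assuming both (i) and (ii), any Lebesgue-null set $A\subset X^n$ has $m_1$-null sections $A_{\bar z^{\myp\prime}}$ for Lebesgue-a.a.\ $\bar z^{\myp\prime}\in X^{n-1}$ by Fubini, hence also for $\eta_n^{\myp\prime}$-a.a.\ $\bar z^{\myp\prime}$ by (i); condition (ii) then implies $\tilde\lambda_n(A_{\bar z^{\myp\prime}}|\mypp\bar z^{\myp\prime})=0$ for these $\bar z^{\myp\prime}$, and integrating against $\eta_n^{\myp\prime}$ yields $\lambda^\star_n(A)=0$. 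For necessity, given $\lambda^\star_n\ll m_n$ with Radon--Nikodym density $f(z_1,\bar z^{\myp\prime})$, one computes the marginal on $X^{n-1}$ and finds it equals $\bigl(\int_X f(z_1,\bar z^{\myp\prime})\,\rd z_1\bigr)\rd\bar z^{\myp\prime}$, which forces $\eta_n^{\myp\prime}$-times-$\tilde\lambda_n(X|\mypp\bar z^{\myp\prime})$ to be a.c.\ with respect to Lebesgue on $X^{n-1}$, giving (i); then Fubini on $f$ recovers (ii) via regular conditional densities.

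For the translation-invariant case, I would invoke directly the computation already used in the proof of Proposition \ref{pr:projection}: translation invariance yields $\lambda\bigl((B_1-z_1)/\sqrt{n}\,\bigr)=n^{-d/2}\lambda(B_1)$, so that by (\ref{eq:tilde-sigma}) the kernel $\tilde\lambda_n(B_1|\mypp\bar z^{\myp\prime})=n^{-d/2}\lambda(B_1)$ is independent of $\bar z^{\myp\prime}$. Since any $\sigma$-finite translation-invariant Borel measure on $\RR^d$ is a scalar multiple of Lebesgue measure (uniqueness of Haar measure), the kernel $\tilde\lambda_n(\rd z_1|\mypp\bar z^{\myp\prime})$ is automatically a.c.\ with respect to $\rd z_1$, i.e.\ condition (ii) is automatic, so (i) alone suffices.

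The main technical obstacle will be the implication $\lambda^\star\ll\text{Leb}\Rightarrow$ (i) and (ii): the measure $\tilde\lambda_n(\rd z_1|\mypp\bar z^{\myp\prime})\,\eta_n^{\myp\prime}(\rd\bar z^{\myp\prime})$ is not a product and its kernel can have infinite total mass (indeed $\tilde\lambda_n(X|\mypp\bar z^{\myp\prime})=\lambda(X)$ may equal $\infty$), so Fubini and the separation of marginal/conditional must be carried out on bounded sets first and then extended using $\sigma$-finiteness of $\lambda^\star_n$ on coordinate strips, as illustrated in Example \ref{ex:1}.
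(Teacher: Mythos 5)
Your proposal is correct and follows exactly the route the paper intends: the paper gives no explicit proof, merely asserting that the criterion follows from the decomposition (\ref{eq:sigmacl}), and your argument --- componentwise reduction over $n$, the disintegration criterion in the rotated coordinates $\bar z=\bar y\mypp U_n$ for both directions, and the Haar-measure observation for the translation-invariant case --- is the natural filling-in of that assertion, with the infinite-mass kernel issue correctly handled via restriction to coordinate strips.
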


\begin{remark}\normalfont
The absolute continuity of $\eta$ is sufficient (cf.\ (\ref{eq:m}),
(\ref{density})), but not necessary, for condition (i). This is
illustrated by the following example:
$$
\eta(\rd y_1,\rd y_2)=\frac12\,\delta_{\{1\}}(\rd y_1) f(y_2)\,\rd
y_2 +\frac12\,\delta_{\{1\}}(\rd y_2) f(y_1)\,\rd y_1,\qquad
(y_1,y_2)\in\mathbb{R}^2,
$$
where $f(y)$ ($y\in\mathbb{R}$) is some probability density
function. Then the projection measure $\eta^{\myp\prime}$ on
$\mathbb{R}$ (see~(\ref{eq:eta'})) is given by
$$
\eta^{\myp\prime}(\rd
z^{\myp\prime})=\frac{\sqrt{2}}{2}\left(f(1-\sqrt{2}\,z^{\myp\prime})+f(1+\sqrt{2}\,z^{\myp\prime})\right)\rd
z^{\myp\prime},\qquad z^{\myp\prime}=\frac{y_1-y_2}{\sqrt{2}},
$$
and so $\eta^{\myp\prime}(\rd z^{\myp\prime})$ is absolutely
continuous.
\end{remark}

The next result shows that the absolute continuity of
$\lambda^\star$ implies that the Poisson cluster process a.s.\ has
no multiple points (see Definition~\ref{def:proper}).

\begin{proposition}\label{pr:sigma*a.c.}
Suppose that $p_\infty=0$, $X=\RR^d$, and the measure
$\lambda^\star(\rd\bar{x})$ on\/ $\mathfrak{X}$ is a.c.\ with
respect to the Lebesgue measure $\rd\bar{x}$. Then $\mucl$-a.a.\
configurations $\gamma\in\varGamma_{X}^\sharp$ are simple.
\end{proposition}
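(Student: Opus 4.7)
The plan is to verify the two conditions (b-i) and (b-ii) of Theorem~\ref{th:properClusterPoisson}(b). Using the decomposition $\lambda^\star = \sum_n p_n \lambda^\star_n$ from (\ref{eq:star}), absolute continuity of $\lambda^\star$ transfers to each summand: for any Lebesgue-null set $B\subset X^n$ one has $\lambda^\star(B) = p_n\myp\lambda^\star_n(B) = 0$, so each $\lambda^\star_n$ with $p_n>0$ is absolutely continuous with respect to the Lebesgue measure on $X^n = \RR^{nd}$.

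For (b-i), I fix $n\ge 2$ with $p_n>0$ and indices $i\ne j$, and consider the diagonal
\[
H_{ij}:=\{\bar{y}\in X^n: y_i=y_j\},
\]
which is an affine subspace of codimension $d$, hence Lebesgue-null. The essential observation is that $H_{ij}$ is invariant under the simultaneous shift $\bar{y}\mapsto\bar{y}+x$, so $H_{ij}-x = H_{ij}$ for every $x\in X$. Hence, by (\ref{mu-measure-n}),
\[
\lambda^\star_n(H_{ij}) = \int_X \eta_n(H_{ij}-x)\,\lambda(\rd x) = \eta_n(H_{ij})\cdot\lambda(X).
\]
Absolute continuity forces the left-hand side to vanish, so $\eta_n(H_{ij})=0$ provided $\lambda(X)>0$ (the case $\lambda\equiv 0$ being trivial since then $\mucl$ is concentrated on the empty configuration). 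A union bound over pairs $i\ne j$ and over $n$ yields (b-i).

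For (b-ii), the same translation trick applied to the slice $S_{1,z}:=\{\bar{y}\in X^n: y_1=z\}$ (again Lebesgue-null of codimension $d$) uses $S_{1,z}-x = S_{1,z-x}$ to give
\[
0 = \lambda^\star_n(S_{1,z}) = \int_X \eta_n^{(1)}\{z-x\}\,\lambda(\rd x),\qquad z\in X,
\]
where $\eta_n^{(1)}$ denotes the first-coordinate marginal of $\eta_n$. The decisive step is the following dichotomy: if $\eta_n^{(1)}$ had an atom at some $z_0$ and $\lambda$ had an atom at some $x_0$, then the choice $z=z_0+x_0$ would make the integrand attain the strictly positive value $\eta_n^{(1)}\{z_0\}\cdot\lambda\{x_0\}$ at $x=x_0$, contradicting the vanishing integral. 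Hence, for every $n$ with $p_n>0$, either $\lambda$ or $\eta_n^{(1)}$ is non-atomic. In the first case, Proposition~\ref{pr:a2}(b-ii$'$) directly delivers (b-ii); in the second, permutation symmetry of $\eta_n$ yields $\mu_0\{z\in\gamma_0^{\myp\prime}\} \le \sum_n n\myp p_n\myp \eta_n^{(1)}\{z\} = 0$ for every $z\in X$, so (b-ii$''$) of Proposition~\ref{pr:a2} applies. Either way, (b-ii) holds, and Theorem~\ref{th:properClusterPoisson}(b) concludes the argument.

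The main subtle point I anticipate is the dichotomy in the proof of (b-ii): the hypothesis supplies only a convolution-type identity, and absolute continuity of $\lambda^\star$ does not on its own imply non-atomicity of $\lambda$. What the argument extracts is the precise consequence that atoms of $\lambda$ and atoms of $\eta_n^{(1)}$ cannot coexist, which is exactly what is needed to invoke one of the sufficient criteria (b-ii$'$), (b-ii$''$) from Proposition~\ref{pr:a2}.
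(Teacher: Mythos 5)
Your proof is correct, and both halves ultimately rest on the same basic observation as the paper's argument --- that certain affine subspaces of $X^n$ are Lebesgue-null and hence $\lambda^\star_n$-null --- but the execution differs, most visibly in part (b-ii). For (b-i) you exploit the translation invariance $H_{ij}-x=H_{ij}$ to get $\lambda^\star_n(H_{ij})=\eta_n(H_{ij})\myp\lambda(X)$ directly from (\ref{mu-measure-n}); the paper reaches the same conclusion by passing to the rotated coordinates of Proposition \ref{pr:projection} and noting that a charged diagonal would force $\eta_n^{\myp\prime}$ to charge a hyperplane in the $\bar z^{\myp\prime}$-variables. These are the same idea, and yours is arguably the more elementary rendering. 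For (b-ii) the routes genuinely diverge: the paper computes, via Fubini, $\int_{\frakX}\lambda\bigl(D_{\{x\}}(\mathfrak{p}(\bar y))\bigr)\,\eta(\rd\bar y)=\lambda^\star\{\bar y\in\frakX: -x\in\mathfrak{p}(\bar y)\}$, and the latter set is a countable union of codimension-$d$ slices, hence Lebesgue-null, so (b-ii) drops out in one line with no case distinction. You instead extract from $\int_X\eta_n^{(1)}\{z-x\}\,\lambda(\rd x)=0$ the structural dichotomy that atoms of $\lambda$ and atoms of $\eta_n^{(1)}$ cannot coexist, and then route the conclusion through the sufficient conditions (b-ii$'$)/(b-ii$''$) of Proposition \ref{pr:a2} (legitimately, since $p_\infty=0$ gives $\nu_0<\infty$ a.s., as that proposition requires). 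This is longer and needs the two-case bookkeeping --- note the cases are really global: either $\lambda$ is non-atomic, or it has an atom and then every marginal $\eta_n^{(1)}$ with $p_n>0$ must be non-atomic --- but it does buy a slightly sharper piece of information, the incompatibility of atoms of $\lambda$ with atoms of the in-cluster marginals, which the paper's one-line argument leaves implicit.
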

\proof By Theorem \ref{th:properClusterPoisson}, it suffices to
check conditions (b-i) and (b-ii). First, note that if condition
(b-i) is not satisfied (i.e., if the set of points
$\bar{y}\in\mathfrak{X}$ with two or more coinciding coordinates has
positive $\eta$-measure), than the projected measure
$\eta^{\myp\prime}(\rd\bar{z}^{\myp\prime})$ charges a hyperplane
(of codimension $1$) in the space $\mathfrak{X}^{\myp\prime}$
spanned over the coordinates $\bar{z}^{\myp\prime}$. But this
contradicts the absolute continuity of $\lambda^\star$, since such
hyperplanes have zero Lebesgue measure.

Furthermore, similarly to (\ref{eq:b-ii}) and using the definition
(\ref{eq:sigma*}), for each $x\in X$ we obtain
\begin{align*}
\int_{\mathfrak{X}}\textstyle \lambda\bigl(\myn\bigcup
_{y_i\in\bar{y}}{}\{x-y_i\}\bigr) \,\eta(\rd\bar{y})&=\int_{X}
\eta\{\bar{y}\in\mathfrak{X}: z-x\in\bar{y}\} \,\lambda(\rd z)\\[.2pc]
&=\lambda^\star\{\bar{y}\in\mathfrak{X}:
-x\in\mathfrak{p}(\bar{y})\}=0,
\end{align*}
by the absolute continuity of $\lambda^\star$. Hence,
$\lambda\bigl(\myn\bigcup_{y_i\in\bar{y}}\{x-y_i\}\bigr)=0$
\,($\eta$-a.s.) and condition (b-ii) follows.
\endproof

\subsection{An auxiliary
Poisson measure $\pi_{\lambda^\star}$}\label{sec:3.2}

Recall  that the ``unpacking'' map
$\mathfrak{p}:\frakX\to\varGamma_X^\sharp$ is defined in
(\ref{eq:pr}). For any Borel subset $B\in\mathcal{B}(X)$, denote
\begin{equation}\label{eq:K}
\frakX_{B}:=\{\bar{x}\in \frakX:\ \mathfrak{p}(\bar{x})\cap B \neq
\emptyset\}\in\mathcal{B}(\varGamma_X^\sharp).
\end{equation}
The following result is crucial for our purposes (cf.\ Example
\ref{ex:1}).

\begin{proposition}\label{prop1}
Let $B\in\mathcal{B}(X)$ be a set such that $\lambda(B)<\infty$.
Then condition \textup{(\ref{eq:condA2})} of Theorem
\textup{\ref{th:properClusterPoisson}\myp(a)} \textup{(}i.e., that
the mean $\lambda$-measure of the droplet cluster $D_B$ is
finite\textup{)} is necessary and sufficient in order that
$\lambda^{\star}(\frakX_{B})<\infty$, or equivalently,
$\bar{\gamma}(\frakX_{B}) <\infty$ for $\pi_{\lambda^{\star}}$-a.a.
$\bar{\gamma}\in\varGamma_{\frakX}$\myp.
\end{proposition}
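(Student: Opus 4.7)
\proof
The plan is to establish the identity
\begin{equation}\label{eq:plan-key}
\lambda^{\star}(\frakX_{B})=\int_{\varGamma^\sharp_{X}}\lambda\bigl(D_B(\gamma_0^{\myp\prime})\bigr)\,\mu_0(\rd\gamma_0^{\myp\prime})
\end{equation}
by a direct Fubini computation, and then to obtain the second equivalence by applying Proposition \ref{pr:properPoisson}(a) to the measurable set $\frakX_B\in\calB(\frakX)$.

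First I would verify that $\frakX_B$ is measurable. Since $\{\gamma\in\varGamma_X^\sharp:\gamma(B)\ge 1\}=\bigcup_{n\ge 1}C_B^{\myp n}\in\calB(\varGamma_X^\sharp)$ and $\mathfrak{p}$ is measurable (see~(\ref{eq:DD})), the preimage $\frakX_B=\mathfrak{p}^{-1}\{\gamma(B)\ge 1\}$ indeed lies in $\calB(\frakX)$.

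Next, to prove (\ref{eq:plan-key}), I would unravel the definition (\ref{eq:sigma*}) of $\lambda^\star$ and observe that, for $\bar{y}\in\frakX$ and $x\in X$, the condition $\bar{y}+x\in\frakX_B$ is equivalent to the existence of some $y_i\in\bar{y}$ with $y_i+x\in B$, i.e.\ to $x\in\bigcup_{y_i\in\bar{y}}(B-y_i)=D_B(\mathfrak{p}(\bar{y}))$. Consequently,
\begin{align*}
\lambda^{\star}(\frakX_B)&=\int_{X}\eta(\frakX_B-x)\,\lambda(\rd x)=\int_{X}\!\int_{\frakX}{\bf 1}_{\frakX_B}(\bar{y}+x)\,\eta(\rd\bar{y})\,\lambda(\rd x)\\
&=\int_{X}\!\int_{\frakX}{\bf 1}_{D_B(\mathfrak{p}(\bar{y}))}(x)\,\eta(\rd\bar{y})\,\lambda(\rd x).
\end{align*}
Since $\eta$ is a probability measure and $\lambda$ is $\sigma$-finite, Tonelli's theorem justifies the exchange of the order of integration. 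Applying the push-forward relation $\mu_0=\mathfrak{p}^*\eta$ (see~(\ref{eq:p*eta})) to the resulting inner integral gives (\ref{eq:plan-key}). This shows that $\lambda^{\star}(\frakX_B)<\infty$ if and only if condition~(\ref{eq:condA2}) is met for the set~$B$.

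Finally, the Poisson measure $\pi_{\lambda^\star}$ on $\varGamma_\frakX^\sharp$ has intensity $\lambda^\star$, so by Proposition~\ref{pr:properPoisson}(a) applied to the measurable set $\frakX_B\in\calB(\frakX)$, the condition $\lambda^{\star}(\frakX_B)<\infty$ is equivalent to $\bar{\gamma}(\frakX_B)<\infty$ for $\pi_{\lambda^\star}$-a.a.\ $\bar{\gamma}\in\varGamma_\frakX^\sharp$, completing the chain of equivalences. The only subtlety worth flagging is the use of Tonelli in the presence of a possibly non-$\sigma$-finite $\lambda^\star$; but the double integral above involves only $\eta$ (finite) and $\lambda$ ($\sigma$-finite), so no issue arises.
\endproof
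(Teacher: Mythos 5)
Your proof is correct and follows essentially the same route as the paper's: both unwind the definition (\ref{eq:sigma*}), use Tonelli to exchange the order of integration, identify the condition $\bar{y}+x\in\frakX_B$ with $x\in D_B(\mathfrak{p}(\bar{y}))$, push forward via (\ref{eq:p*eta}), and conclude the second equivalence from Proposition~\ref{pr:properPoisson}\myp(a). Your extra remarks on the measurability of $\frakX_B$ and on the legitimacy of Tonelli (the integral involves only the probability measure $\eta$ and the $\sigma$-finite $\lambda$, not $\lambda^\star$) are sound refinements of the same argument.
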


\proof Using (\ref{eq:sigma*}) we obtain
\begin{equation}\label{eq:p3}
\lambda^{\star}(\frakX_{B})=\int_ X \eta(\frakX_{B}-x) \,\lambda(\rd
x)=\int_{\frakX}\!\left(\int_ X{\bf 1}_{\frakX_{B}}(\bar
y+x)\,\lambda(\rd x)\right)\eta(\rd\bar{y}).
\end{equation}
By definition (\ref{eq:K}), $\bar y+x\in \frakX_B$ if and only if
$x\in \bigcup_{y_i\in\bar y} (B-y_i)\equiv D_B(\bar y)$
(see~(\ref{eq:D})). Hence, (\ref{eq:p3}) can be rewritten as
\begin{align*}
\lambda^{\star}(\frakX_{B})&=\int_{\frakX}\!\left(\int_ X{\bf
1}_{D_{B}(\bar y)}(x)\,\lambda(\rd x)\right)\eta(\rd\bar{y})\\[.2pc]
&=\int_{\frakX}\lambda\bigl(D_B(\bar y)\bigr)\,\eta(\rd\bar{y})
=\int_{\varGamma_X^\sharp}\lambda\bigl(D_B(\gamma_0^{\myp\prime})\bigr)\,
\mu_0(\rd\gamma_0^{\myp\prime}),
\end{align*}
by the change of measure (\ref{eq:p*eta}). Thus, the bound
$\lambda^{\star}(\frakX_{B})<\infty$ is nothing else but condition
(\ref{eq:condA2}) applied to $B$. The second part follows by
Proposition~\ref{pr:properPoisson}\myp(a).
\endproof

Let us consider the cluster configuration space
$\varGamma^\sharp_{\frakX}$ over the space $\frakX$ with generic
elements $\bar{\gamma}\in\varGamma^\sharp_{\frakX}$. Our next goal
is to define a Poisson measure $\pi_{\lambda^\star}$ on
$\varGamma_{\frakX}^\sharp$ with intensity $\lambda^\star$. However,
as Remark \ref{rm:blowup} and Example \ref{ex:blowup} indicate, the
measure $\lambda^\star$ may not be $\sigma$-finite, in which case a
general construction of the Poisson measure as developed in Section
\ref{sec:Poisson} would not be applicable. It turns out that
Proposition \ref{prop1} provides a suitable basis for a good theory.

\begin{proposition}\label{pr:pi*}
Suppose that condition \textup{(\ref{eq:condA2})} of Theorem
\textup{\ref{th:properClusterPoisson}\myp(a)} is fulfilled for any
set $B\in\mathcal{B}(X)$ such that $\lambda(B)<\infty$. Then the
measure $\lambda^\star$ on $\frakX$ is $\sigma$-finite.
\end{proposition}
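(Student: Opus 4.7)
The plan is to transfer the $\sigma$-finiteness of $\lambda$ on $X$ to $\lambda^{\star}$ on $\frakX$ via Proposition~\ref{prop1}. Since $\lambda$ is $\sigma$-finite, I begin by fixing an increasing exhausting sequence $\varLambda_{k}\in\calB(X)$ with $\lambda(\varLambda_{k})<\infty$ for each $k\in\NN$ and $\bigcup_{k=1}^{\infty}\varLambda_{k}=X$.

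Next, I observe that by the standing hypothesis, condition \textup{(\ref{eq:condA2})} holds for each $\varLambda_{k}$ (as $\lambda(\varLambda_{k})<\infty$). Proposition~\ref{prop1} therefore yields
$$
\lambda^{\star}(\frakX_{\varLambda_{k}})<\infty\qquad(k\in\NN),
$$
where $\frakX_{\varLambda_{k}}=\{\bar{x}\in\frakX:\mathfrak{p}(\bar{x})\cap\varLambda_{k}\ne\emptyset\}$ as in \textup{(\ref{eq:K})}. Thus I already have a countable family of measurable subsets of $\frakX$ with finite $\lambda^{\star}$-measure, and it remains only to check that they (essentially) exhaust $\frakX$.

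The covering claim is straightforward: for any $\bar{x}\in\frakX\setminus\{\emptyset\}$ there is at least one coordinate $x_{i}\in X$, and since $\bigcup_{k}\varLambda_{k}=X$ there is $k$ such that $x_{i}\in\varLambda_{k}$, whence $\bar{x}\in\frakX_{\varLambda_{k}}$. Consequently,
$$
\frakX=\{\emptyset\}\cup\bigcup_{k=1}^{\infty}\frakX_{\varLambda_{k}}.
$$
The only remaining issue is the isolated ``empty'' point $\{\emptyset\}\subset X^{0}$, which is not reached by any $\frakX_{\varLambda_{k}}$. By \textup{(\ref{eq:star})} and \textup{(\ref{eq:sigma0})} its measure is $\lambda^{\star}(\{\emptyset\})=p_{0}\myp\lambda(X)$; this is finite provided either $p_{0}=0$ or $\lambda(X)<\infty$, and in the former case one may as well reduce to the situation $p_{0}=0$ by the intensity rescaling $\lambda\mapsto(1-p_{0})\myp\lambda$ noted earlier in the paper, since empty clusters make no contribution to the observed cluster configuration.

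No serious obstacle arises in this argument: the heavy lifting is done by Proposition~\ref{prop1}, and the only mildly delicate point is the trivial component $X^{0}=\{\emptyset\}$, which is handled by simply adjoining it to the covering sequence.
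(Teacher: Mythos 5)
Your proof is correct and follows essentially the same route as the paper: take a $\sigma$-finite exhaustion $\varLambda_k$ of $X$, invoke Proposition~\ref{prop1} to get $\lambda^{\star}(\frakX_{\varLambda_k})<\infty$, and observe that these sets cover $\frakX$. In fact you are more careful than the paper on one point: the paper simply asserts that $\bigcup_k\frakX_{\varLambda_k}=\frakX$, which overlooks the single point $\{\emptyset\}\in X^0$ (it satisfies $\mathfrak{p}(\emptyset)\cap\varLambda_k=\emptyset$ for every $k$, and by (\ref{eq:sigma0}) it carries $\lambda^{\star}$-mass $p_0\myp\lambda(X)$, which is infinite when $p_0>0$ and $\lambda(X)=\infty$). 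Your remedy — reduce to $p_0=0$ by the rescaling $\lambda\mapsto(1-p_0)\myp\lambda$, which does not affect the cluster measure — is legitimate; the paper instead quietly defers this degenerate component to a later remark (defining $\pi_{p_0\lambda_0^\star}$ as a point mass and noting it is irrelevant for the projection construction). So your version closes a small gap in the published argument rather than introducing one.
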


\proof Since the measure $\lambda$ on $X$ is $\sigma$-finite, there
is a sequence of sets $B_k\in\mathcal{B}(X)$ $(k\in\NN$) such that
$\lambda(B_k)<\infty$ and $\bigcup_{k=1}^{\infty}\myn B_k=X$. Hence,
by Proposition \ref{prop1}, $\lambda^\star(\frakX_{B_k})<\infty$ for
each $B_k$, and from the definition (\ref{eq:K}) it is clear that
$\bigcup_{k=1}^\infty \frakX_{B_k}=\frakX$.
\endproof

By virtue of Proposition \ref{pr:pi*} and according to the
discussion in Section \ref{sec:Poisson}, the Poisson measure
$\pi_{\lambda^\star}$ on the configuration space
$\varGamma^\sharp_{\frakX}$ does exist. Moreover, due to Remark
\ref{rm:product}, this is true even without any extra topological
assumptions, except that of $\sigma$-finiteness of the basic
intensity measure $\lambda$. The construction of
$\pi_{\lambda^\star}$ may be elaborated further by applying
Proposition \ref{pr:product} to
$\frakX=\bigsqcup_{n\in\overline{\ZZ}_+}\myn X^n$ and
$\lambda^\star=\bigoplus_{n\in\overline{\ZZ}_+}\myn
p_n\lambda^\star_n$; namely, one first defines the Poisson measures
$\pi_{p_n\lambda_n^\star}$ on the constituent configuration spaces
$\varGamma_{X^n}^\sharp$ (of course, the measures $\lambda_n^\star$
are $\sigma$-finite together with $\lambda^\star$) and then
constructs the Poisson measure $\pi_{\lambda^\star}$ on
$\varGamma_\frakX^\sharp=\Cross_{n\in\overline{\ZZ}_+}\myn
\varGamma_{X^n}^\sharp$ as a product measure,
$\pi_{\lambda^\star}=\bigotimes_{n\in\overline{\ZZ}_+}
\pi_{p_n\lambda_n^\star}$.

\begin{remark}\normalfont
A degenerate Poisson measure $\pi_{p_0\lambda_0^\star}$ on
$\varGamma_{X^0}^\sharp$ is defined as
$\pi_{p_0\lambda_0^\star}:=\delta_{\{\bar{\gamma}_\infty\}}$, where
$\bar{\gamma}_\infty=(\{\emptyset\},\{\emptyset\},\dots)$, i.e.,
$\bar{\gamma}_\infty(X^0)=\infty$. The component
$\pi_{p_0\lambda_0^\star}$ is actually irrelevant in the projection
construction described in the next section.
\end{remark}

\subsection{Poisson cluster measure via the Poisson measure
$\pi_{\lambda^\star}$}\label{sec:PCP}

We can lift the projection mapping (\ref{eq:pr}) to the
configuration space $\varGamma^\sharp_{\frakX}$ by setting
\begin{equation}\label{eq:proj}
\varGamma_{\frakX}^\sharp\ni\bar{\gamma}\mapsto
\mathfrak{p}(\bar{\gamma}):=\bigsqcup _{\bar{x}\in
\bar{\gamma}}\mathfrak{p}(\bar{x})\in\varGamma_{X}^\sharp.
\end{equation}
Disjoint union in (\ref{eq:proj}) highlights the fact that
$\mathfrak{p}(\bar{\gamma})$ may have multiple points, even if
$\bar\gamma$ is proper. It is not difficult to see that
(\ref{eq:proj}) is a measurable mapping. Indeed, using the sets
$D_B^{\myp n}$ introduced in (\ref{eq:DD}), for any cylinder set
$C_B^{\myp n}\subset \varGamma^\sharp_{ X}$ ($B\in\mathcal{B}(X)$,
$n\in\ZZ_+$) we have $\mathfrak{p}^{-1}(C_B^{\myp n})=A_B^{\myp
n}\in\mathcal{B}(\varGamma^\sharp_\frakX)$, where, for instance,
\begin{align*}
A_B^{\myp 0}&=\{\bar{\gamma}\in\varGamma^\sharp_{\frakX}:
\,\bar{\gamma}(\frakX\setminus D_B^{\myp 0})=0\},\\[.2pc]
A_B^{\myp 1}&=\{\bar{\gamma}\in\varGamma^\sharp_{\frakX}:
\,\bar{\gamma}(D_B^{\myp 1})=1 \},\\[.2pc]
A_B^{\myp 2}&=\{\bar{\gamma}\in\varGamma^\sharp_{\frakX}:
\,\bar{\gamma}(D_B^{\myp 2})=1 \ \ \text{or} \ \
\bar{\gamma}(D_B^{\myp 1})=2\},
\end{align*}
and, more generally, $ A_B^{\myp
n}=\bigcup_{(n_k)}\myn\!\bigcap_{k=1}^{\infty}\{\bar{\gamma}\in\varGamma^\sharp_{\frakX}:
\,\bar{\gamma}(D_B^{\myp k})=n_k\}$, where the union is taken over
integer arrays $(n_k)=(n_1,n_2,\dots)$ such that $n_k>0$ and $\sum_k
k\myp n_k=n$.

Finally, we introduce the measure $\mu$ on $\varGamma^\sharp_ X$ as
a push-forward of the Poisson measure $\pi_{\lambda^{\star}}$ under
the mapping $\mathfrak{p}$,
\begin{equation}\label{eq:mu*}
\mu(A):= (\mathfrak{p}^*\pi_{\lambda^{\star}})(A)
\equiv\pi_{\lambda^{\star}}(\mathfrak{p}^{-1}(A)),\qquad
A\in\calB(\varGamma^\sharp_ X).
\end{equation}

The next theorem is the main result of this section.

\begin{theorem}\label{th:mucl}
The measure $\mu=\mathfrak{p}^*\pi_{\lambda^{\star}}$ on
$\varGamma^\sharp_ X$ defined by\/ \textup{(\ref{eq:mu*})} coincides
with the Poisson cluster measure $\mucl$.
\end{theorem}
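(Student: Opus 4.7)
The plan is to verify the identity $\mu = \mucl$ by computing the Laplace functional $L_\mu[f]$ and matching it with the explicit expression for $L_{\mucl}[f]$ given in Proposition~\ref{pr:clusterP}. Since, by the discussion at the end of Section~\ref{sec:General}, a probability measure on $\varGamma_X^\sharp$ is uniquely determined by its Laplace functional on $\mathrm{M}_+(X)$, this will suffice.

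First I would lift any $f\in \mathrm{M}_+(X)$ to a function $\tilde f\in\mathrm{M}_+(\frakX)$ by setting $\tilde f(\bar x):=\sum_{x_i\in\bar x} f(x_i)$, so that, by definition (\ref{eq:proj}) of the lifted projection,
\begin{equation*}
\langle f,\mathfrak{p}(\bar\gamma)\rangle=\sum_{\bar x\in\bar\gamma}\sum_{x_i\in\bar x}f(x_i)=\sum_{\bar x\in\bar\gamma}\tilde f(\bar x)=\langle \tilde f,\bar\gamma\rangle.
\end{equation*}
Using the change-of-variables formula associated with the push-forward (\ref{eq:mu*}) and the Laplace-functional formula (\ref{eq:PoissonLT}) for the Poisson measure $\pi_{\lambda^\star}$ on $\varGamma_\frakX^\sharp$, this gives
\begin{equation*}
L_\mu[f]=\int_{\varGamma_\frakX^\sharp}\re^{-\langle \tilde f,\bar\gamma\rangle}\,\pi_{\lambda^\star}(\rd\bar\gamma)=\exp\left\{-\int_\frakX \bigl(1-\re^{-\tilde f(\bar x)}\bigr)\,\lambda^\star(\rd\bar x)\right\}.
\end{equation*}

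Next I would unfold the inner integral by applying the defining identity (\ref{eq:int-sigma-n}) for $\lambda^\star$ to the function $\bar y\mapsto 1-\re^{-\tilde f(\bar y)}$, observing that $\tilde f(\bar y+x)=\sum_{y_i\in\bar y} f(y_i+x)$, so
\begin{equation*}
\int_\frakX\bigl(1-\re^{-\tilde f(\bar x)}\bigr)\,\lambda^\star(\rd\bar x)=\int_X\!\left(\int_\frakX\Bigl(1-\exp\Bigl\{-\mynn\sum_{y_i\in\bar y}f(y_i+x)\Bigr\}\Bigr)\eta(\rd\bar y)\right)\lambda(\rd x).
\end{equation*}
Finally, since $\mu_0=\mathfrak{p}^*\eta$ by (\ref{eq:p*eta}) and $\sum_{y_i\in\bar y}f(y_i+x)=\sum_{y\in\mathfrak{p}(\bar y)} f(y+x)$, transporting the inner integral from $\frakX$ to $\varGamma_X^\sharp$ turns the right-hand side into exactly the exponent appearing in (\ref{eq:ClusterPoissonLT}). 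Hence $L_\mu[f]=L_{\mucl}[f]$, and the theorem follows from uniqueness of Laplace functionals.

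I expect the only delicate points to be bookkeeping issues rather than conceptual ones: namely (i) justifying that the $X^0$ component contributes nothing (since $\tilde f(\emptyset)=0$, the integrand $1-\re^{-\tilde f(\bar x)}$ vanishes identically on $X^0$, consistently with the convention made after~(\ref{eq:f-gamma}) and the reduction noted just after Proposition~\ref{pr:clusterP}); and (ii) ensuring Fubini's theorem is applicable in (\ref{eq:int-sigma-n}), which is fine because the integrand is nonnegative and bounded by~$1$. The possibly non-locally-finite nature of $\lambda^\star$ is not an issue at the level of the Laplace functional, because either integral appearing is well defined in $[0,\infty]$, and Proposition~\ref{pr:PoissonLT} was established for arbitrary $\sigma$-finite intensities.
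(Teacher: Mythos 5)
Your proposal is correct and follows essentially the same route as the paper's own proof: both compute $L_\mu[f]$ via the lift $\tilde f(\bar x)=\sum_{x_i\in\bar x}f(x_i)$, apply Proposition~\ref{pr:PoissonLT} to $\pi_{\lambda^\star}$, unfold the exponent with the defining identity (\ref{eq:int-sigma-n}) for $\lambda^\star$, and transport back via $\mu_0=\mathfrak{p}^*\eta$ to match (\ref{eq:ClusterPoissonLT}). The bookkeeping points you flag (the vacuous $X^0$ contribution and the harmlessness of a possibly non-$\sigma$-finite $\lambda^\star$ at the level of nonnegative integrands) are handled implicitly in the paper and do not affect the argument.
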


\proof According to Section \ref{sec:General}, it is sufficient to
compute the Laplace functional of the measure $\mu$. For any
$f\in\mathrm{M}_+(X)$, by the change of measure (\ref{eq:mu*}) we
have
\begin{equation}\label{eq:LTn}
\int_{\varGamma _{ X}^\sharp}\re^{-\langle f,\myp\gamma \rangle
}\,\mu(\rd\gamma )  =\int_{\varGamma _{\frakX}}\re^{-\langle
f,\mypp\mathfrak{p}(\bar{\gamma})\rangle }\,\pi
_{\lambda^{\star}}(\rd\bar{\gamma}) =\int_{\varGamma
_{\frakX}}\re^{-\langle \tilde{f},\myp\bar{\gamma}\rangle }\,\pi
_{\lambda^{\star}}(\rd\bar{\gamma}),
\end{equation}
where $\tilde{f}(\bar{y}):=\sum_{y_i\in\bar{y}}
f(y_i)\in\mathrm{M}_+(\frakX)$. According to (\ref{eq:PoissonLT})
and (\ref{eq:int-sigma-n}), the right-hand side of (\ref{eq:LTn})
takes the form
\begin{gather*}
\exp\left\{ -\int_{\frakX}\left(1-\re^{-\tilde{f}(\bar{y})}\right)
\lambda^{\star}(\rd\bar{y})\right\} =\exp \left\{- \int_{
X}\int_{\frakX} \left(1-\re^{-\tilde{f}(\bar{y}+x)}\right)\eta
(\rd\bar{y})\,\lambda(\rd x)\right\}\\[.2pc]
=\exp \left\{-\int_{ X}\left(\int_{\frakX}
\left(1-\re^{-\sum_{y_i\in\bar y} f(y_i+x)}\right)\eta
(\rd\bar{y})\right)\lambda(\rd x)\right\},
\end{gather*}
which, after the change of measure (\ref{eq:p*eta}), coincides with
the expression (\ref{eq:ClusterPoissonLT}) for the Laplace
functional of the Poisson cluster measure $\mucl$.
\endproof

\begin{remark}\normalfont
As an elegant application of the technique developed here, let us
give a transparent proof of Theorem
\ref{th:properClusterPoisson}\myp(a) (cf.\ the Appendix,
Section~\ref{app1}). Indeed, in order that a given compact set
$K\subset X$ contain finitely many points of configuration
$\gamma=\mathfrak{p}(\bar\gamma)$, it is necessary and sufficient
that (i) each cluster ``point'' $\bar{x}\in\bar\gamma$ is locally
finite, which is equivalent to the condition (a-i), and (ii) there
are finitely many points $\bar{x}\in\bar\gamma$ which contribute to
the set $K$ under the mapping $\mathfrak{p}$, the latter being
equivalent to condition (a-ii) by Proposition~\ref{prop1}.
\end{remark}

\subsection{An alternative construction of the measures\/
$\pi_{\lambda^{\star}}\!$ and\/ $\mucl$}\label{sec:3.3}

The measure $\pi_{\lambda^{\star}}$ was introduced in the previous
section as a Poisson measure on the configuration space
$\varGamma_{\mathfrak{X}}$ with a certain intensity measure
$\lambda^{\star}$ prescribed \emph{ad hoc} by equation
(\ref{eq:sigma*}). In this section, we show that
$\pi_{\lambda^{\star}}$ can be obtained in a more natural way as a
suitable skew projection of a canonical Poisson measure
\,$\widehat\pi$ \,defined on a bigger configuration space
$\varGamma^\sharp_{X\times \frakX}$, with the product intensity
measure $\lambda\otimes\eta$.

More specifically, given a Poisson measure $\pi_{\lambda}$ in
$\varGamma^\sharp_{ X}$, let us construct a new measure
\,$\widehat\mu$ \,in $\varGamma^\sharp_{ X\times\frakX}$ as the
probability distribution of random configurations
$\widehat{\gamma}\in \varGamma^\sharp_{ X\times\frakX}$ obtained
from Poisson configurations $\gamma\in\varGamma^\sharp_{ X}$ by the
rule
\begin{equation}\label{eq:gamma-hat}
\gamma \mapsto \widehat{\gamma}:=\{(x,\bar{y}_x): x\in\gamma,
\,\bar{y}_x\in\frakX\},
\end{equation}
where the random vectors $\{\bar{y}_x\}$ are i.i.d., with common
distribution $\eta(\rd \bar{y})$. Geometrically, such a construction
may be viewed as pointwise i.i.d.\ translations of the Poisson
configuration $\gamma\in X$ into the space $ X\times\frakX$,
$$
X\ni x\leftrightarrow (x,0)\mapsto (x,\bar{y}_x)\in X\times\frakX.
$$

\begin{remark}\normalfont
Vector $\bar{y}_x$ in each pair $(x,\bar{y}_x)\in X\times\frakX$ can
be interpreted as a \emph{mark} attached to the point $x\in X$, so
that $\widehat\gamma$ becomes a marked configuration, with the mark
space $\frakX$ (see \cite{DVJ1,KunaPhD}).
\end{remark}

\begin{theorem}\label{th:product-Poisson}
The probability distribution\/ $\widehat{\mu}$ of random
configurations\/ $\widehat{\gamma}\in \varGamma^\sharp_{
X\times\frakX}$ constructed in\/ \textup{(\ref{eq:gamma-hat})} is
given by the Poisson measure\/ $\pi_{\widehat{\lambda}}$ on the
configuration space $\varGamma^\sharp_{ X\times\frakX}$, with the
product intensity measure\/ $\widehat{\lambda}:=\lambda\otimes\eta$.
\end{theorem}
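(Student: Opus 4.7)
The plan is to verify the identity $\widehat{\mu}=\pi_{\widehat{\lambda}}$ by computing the Laplace functional of $\widehat{\mu}$ and matching it with the known expression from Proposition \ref{pr:PoissonLT}, since by the discussion in Section \ref{sec:General} the Laplace functional uniquely determines a measure on $\varGamma^\sharp_{X\times\frakX}$. Note first that the product intensity $\widehat{\lambda}=\lambda\otimes\eta$ is $\sigma$-finite (as $\lambda$ is $\sigma$-finite and $\eta$ is a probability measure), so the Poisson measure $\pi_{\widehat{\lambda}}$ on $\varGamma^\sharp_{X\times\frakX}$ exists by the construction recalled in Section \ref{sec:Poisson}.

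Fix $f\in\mathrm{M}_+(X\times\frakX)$. From the description (\ref{eq:gamma-hat}) we have
\begin{equation*}
\langle f,\widehat{\gamma}\rangle=\sum_{x\in\gamma}f(x,\bar{y}_x),
\end{equation*}
where $\gamma$ is Poisson with intensity $\lambda$ and, conditionally on $\gamma$, the marks $\{\bar{y}_x\}_{x\in\gamma}$ are i.i.d.\ with law $\eta$. Conditioning on $\gamma$ and using the conditional independence of the marks yields
\begin{equation*}
\EE\bigl[\re^{-\langle f,\widehat{\gamma}\rangle}\mid\gamma\bigr]
=\prod_{x\in\gamma}\int_{\frakX}\re^{-f(x,\bar{y})}\,\eta(\rd\bar{y})
=\exp\!\bigl(-\langle g,\gamma\rangle\bigr),
\end{equation*}
where $g(x):=-\ln\!\int_{\frakX}\re^{-f(x,\bar{y})}\,\eta(\rd\bar{y})\in\mathrm{M}_+(X)$ (non-negativity because the inner integral is at most $1$; measurability follows from Fubini). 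Taking expectations and applying Proposition \ref{pr:PoissonLT} to $\pi_\lambda$ gives
\begin{equation*}
L_{\widehat{\mu}}[f]=L_{\pi_\lambda}[g]=\exp\!\left\{-\int_X\bigl(1-\re^{-g(x)}\bigr)\lambda(\rd x)\right\}.
\end{equation*}

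The final step is a short calculation: since $\re^{-g(x)}=\int_{\frakX}\re^{-f(x,\bar{y})}\,\eta(\rd\bar{y})$ and $\eta(\frakX)=1$,
\begin{equation*}
1-\re^{-g(x)}=\int_{\frakX}\bigl(1-\re^{-f(x,\bar{y})}\bigr)\eta(\rd\bar{y}),
\end{equation*}
so by Fubini's theorem (all integrands are non-negative) we obtain
\begin{equation*}
L_{\widehat{\mu}}[f]=\exp\!\left\{-\int_{X\times\frakX}\bigl(1-\re^{-f(x,\bar{y})}\bigr)(\lambda\otimes\eta)(\rd x\,\rd\bar{y})\right\}=L_{\pi_{\widehat\lambda}}[f],
\end{equation*}
again by Proposition \ref{pr:PoissonLT}. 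Uniqueness of a measure by its Laplace functional then forces $\widehat{\mu}=\pi_{\widehat{\lambda}}$.

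The only genuinely delicate points are bookkeeping issues rather than conceptual obstacles: checking that the recipe (\ref{eq:gamma-hat}) really produces a $\varGamma^\sharp_{X\times\frakX}$-valued random element (so that $\widehat{\mu}$ is well defined as a probability measure on $\calB(\varGamma^\sharp_{X\times\frakX})$), and verifying measurability of $g$. The first reduces to a standard measurable-selection/enumeration argument using that one may enumerate the $\sigma$-finitely many points of $\gamma$ in each $\varLambda_m$ of Section \ref{sec:Poisson} and attach to them an independent sequence of $\eta$-distributed marks; the second is immediate from Fubini. All other steps are routine once the Laplace-functional strategy is adopted.
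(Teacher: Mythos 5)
Your proof is correct and follows essentially the same route as the paper's: conditioning on the Poisson configuration $\gamma$, using the i.i.d.\ marks to reduce the Laplace functional of $\widehat{\mu}$ to $L_{\pi_\lambda}[g]$ with $g(x)=-\ln\int_{\frakX}\re^{-f(x,\bar{y})}\,\eta(\rd\bar{y})$, and then using $\eta(\frakX)=1$ to recognize the product-intensity form. The additional bookkeeping you flag (well-definedness of $\widehat{\mu}$, measurability of $g$) is handled in the paper only by a brief remark, so your treatment is, if anything, slightly more careful on those points.
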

\proof Let us check that, for any non-negative measurable function
$f(x,\bar{y})$ on $ X\times\frakX$, the Laplace functional of the
measure $\widehat{\mu}$ is given by formula (\ref{eq:PoissonLT}).
Using independence of the vectors $\bar{y}_x$ corresponding to
different $x$, we obtain
\begin{align*}
\int_{\varGamma^\sharp_{ X\times\frakX}}\re^{-\langle
f,\mypp\widehat\gamma\mypp\rangle}\,\widehat{\mu}(\rd\widehat{\gamma})
&=\int_{\varGamma^\sharp_{ X}}\prod_{x\in\gamma}\left(\int_{\frakX}
\re^{-f(x,\bar{y})}\,\eta(\rd\bar{y})\right)\pi_{\lambda}(\rd\gamma)\\
&=\exp\left\{-\int_{ X}\left(1-\int_{\frakX}
\re^{-f(x,\bar{y})}\,\eta(\rd\bar{y})\right)\lambda(\rd x)
\right\}\\[.3pc]
&=\exp\left\{-\int_{ X}\int_{\frakX}\left(1-
\re^{-f(x,\bar{y})}\right)\lambda(\rd x)\,\eta(\rd\bar{y}) \right\}\\[.3pc]
&=\exp\left\{-\int_{ X\times\frakX}\left(1-
\re^{-f(x,\bar{y})}\right)\widehat\lambda(\rd
x,\rd\bar{y})\right\}\\
&=\int_{\varGamma^\sharp_{ X\times\frakX}}\re^{-\langle
f,\mypp\widehat\gamma\mypp\rangle}\,\pi_{\widehat{\lambda}}(\rd\widehat{\gamma}),
\end{align*}
where we have applied formula (\ref{eq:PoissonLT}) for the Laplace
functional of the Poisson measure $\pi_\lambda$ with the function
$\tilde f(x)=-\ln\left( \int_{\frakX}
\re^{-f(x,\bar{y})}\,\eta(\rd\bar{y})\right)\in\mathrm{M}_+(X)$.
\endproof

\begin{remark}\label{rem0} \normalfont
The measure $\widehat{\mu}$, originally defined on configurations
$\widehat{\gamma}$ of the form (\ref{eq:gamma-hat}), naturally
extends to a probability measure on the entire space
$\varGamma^\sharp_{ X\times\frakX}$.
\end{remark}

\begin{remark}\normalfont
Theorem \ref{th:product-Poisson} can be regarded as a generalization
of the well-known invariance property of Poisson measures under
random i.i.d.\ translations (see, e.g., \cite{CI,DVJ1,Kingman}). A
novel element here is that starting from a Poisson point process in
$X$, random translations create a new (Poisson) point process in a
bigger space, $ X\times\frakX$, with the product intensity measure.
On the other hand, note that the pointwise coordinate projection
$X\times\frakX\ni (x,\bar{y}_x)\mapsto x\in X$ recovers the original
Poisson measure $\pi_{\lambda}$, in accord with the Mapping Theorem
(see Proposition~\ref{pr:mapping}).
Therefore, Theorem \ref{th:product-Poisson} provides a converse
counterpart to the Mapping Theorem. To the best of our knowledge,
these interesting properties of Poisson measures have not been
pointed out in the literature so far.
\end{remark}

Theorem \ref{th:product-Poisson} can be easily extended to more
general (skew) translations.

\begin{theorem}\label{th:product-Poisson''} Suppose that random
configurations\/ $\widehat{\gamma}_+\in
\varGamma^\sharp_{X\times\frakX}$ are obtained from Poisson
configurations $\gamma\in\varGamma^\sharp_{ X}$ by pointwise
translations $x\mapsto(x,\bar{y}_x+x)$, where $\bar{y}_x\in
\mathfrak{X}$ $(x\in X)$ are i.i.d.\ with common distribution\/
$\eta(\rd\bar{y})$. Then the corresponding probability measure\/
$\widehat{\mu}_+$ on $\varGamma^\sharp_{X\times\frakX}$ coincides
with the Poisson measure of intensity
\begin{equation}\label{eq:hat-sigma}
\widehat{\lambda}_+(\rd x,\rd\bar{y}):=\lambda(\rd
x)\,\eta(\rd\bar{y}-x).
\end{equation}
\end{theorem}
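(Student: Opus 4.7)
The plan is to proceed exactly as in the proof of Theorem~\ref{th:product-Poisson}, i.e.\ by computing the Laplace functional of $\widehat{\mu}_+$ and then invoking Proposition~\ref{pr:PoissonLT} to identify the resulting expression as the Laplace functional of a Poisson measure with the claimed intensity. The only new ingredient is a simple change of variables that turns the product intensity $\lambda(\rd x)\,\eta(\rd\bar y)$ into the skew intensity $\lambda(\rd x)\,\eta(\rd\bar y-x)$.

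More precisely, first I would fix $f\in\mathrm{M}_+(X\times\frakX)$ and condition on the background Poisson configuration $\gamma\in\varGamma^\sharp_X$. Since the shifted marks $\{\bar y_x+x\}$ are independent for different $x\in\gamma$ (as the $\bar y_x$'s are i.i.d.), the conditional Laplace functional factorises:
\begin{equation*}
\int_{\varGamma^\sharp_{X\times\frakX}}\re^{-\langle f,\myp\widehat\gamma_+\rangle}\,\widehat\mu_+(\rd\widehat\gamma_+)
=\int_{\varGamma^\sharp_X}\prod_{x\in\gamma}\left(\int_{\frakX}\re^{-f(x,\myp\bar y+x)}\,\eta(\rd\bar y)\right)\pi_\lambda(\rd\gamma).
\end{equation*}
Next, I would apply formula (\ref{eq:PoissonLT}) to the Poisson measure $\pi_\lambda$, with the non-negative test function
\begin{equation*}
\tilde f(x):=-\ln\left(\int_{\frakX}\re^{-f(x,\myp\bar y+x)}\,\eta(\rd\bar y)\right)\in\mathrm{M}_+(X),
\end{equation*}
obtaining
\begin{equation*}
\exp\left\{-\int_X\left(1-\int_{\frakX}\re^{-f(x,\myp\bar y+x)}\,\eta(\rd\bar y)\right)\lambda(\rd x)\right\}.
\end{equation*}

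The key (and essentially only nontrivial) step is then to perform, for each fixed $x\in X$, the translation $\bar z:=\bar y+x$ inside the inner integral: by definition of the pushforward of $\eta$ under the shift $\bar y\mapsto\bar y+x$, one has
\begin{equation*}
\int_{\frakX}\re^{-f(x,\myp\bar y+x)}\,\eta(\rd\bar y)=\int_{\frakX}\re^{-f(x,\myp\bar z)}\,\eta(\rd\bar z-x),
\end{equation*}
so that the above Laplace functional becomes
\begin{equation*}
\exp\left\{-\int_{X\times\frakX}\left(1-\re^{-f(x,\myp\bar z)}\right)\lambda(\rd x)\,\eta(\rd\bar z-x)\right\}=\exp\left\{-\int_{X\times\frakX}\left(1-\re^{-f(x,\myp\bar z)}\right)\widehat\lambda_+(\rd x,\rd\bar z)\right\},
\end{equation*}
by the definition (\ref{eq:hat-sigma}) of $\widehat\lambda_+$. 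By Proposition~\ref{pr:PoissonLT}, the right-hand side is precisely the Laplace functional of $\pi_{\widehat\lambda_+}$, and since $L_{\widehat\mu_+}[\,\cdot\,]$ determines $\widehat\mu_+$ uniquely (Section~\ref{sec:General}), we conclude $\widehat\mu_+=\pi_{\widehat\lambda_+}$.

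I do not anticipate a serious obstacle: the construction mirrors that of Theorem~\ref{th:product-Poisson} verbatim, and the only small care needed is to verify that $\widehat\lambda_+$ as defined by (\ref{eq:hat-sigma}) is a genuine (Carathéodory-extendable) $\sigma$-finite measure on $\calB(X\times\frakX)$ — this is routine since it is obtained from $\lambda\otimes\eta$ by the measurable bijection $(x,\bar y)\mapsto(x,\bar y+x)$, whose inverse $(x,\bar z)\mapsto(x,\bar z-x)$ is also measurable, so $\widehat\lambda_+$ is simply the pushforward of $\lambda\otimes\eta$ under this bijection and inherits $\sigma$-finiteness from $\lambda$ and $\eta$.
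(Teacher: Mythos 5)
Your proposal is correct and follows essentially the same route as the paper: the paper presents Theorem~\ref{th:product-Poisson''} as an ``easy extension'' of Theorem~\ref{th:product-Poisson}, and you carry out exactly that extension — conditioning on the Poisson configuration, factorising by independence of the marks, applying formula~(\ref{eq:PoissonLT}) with the test function $\tilde f$, and performing the shift $\bar z=\bar y+x$ to identify the intensity as $\widehat\lambda_+$. The closing observation that $\widehat\lambda_+$ is the pushforward of $\lambda\otimes\eta$ under the measurable bijection $(x,\bar y)\mapsto(x,\bar y+x)$ is a correct and welcome bit of care.
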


\begin{corollary}\label{cor:sigma*}
Under the pointwise projection $(x,\bar{y})\mapsto\bar{y}$ applied
to configurations\/ $\widehat{\gamma}_+\in
\varGamma^\sharp_{X\times\frakX}$, the Poisson measure\/
$\widehat{\mu}_+$ of Theorem \textup{\ref{th:product-Poisson''}} is
pushed forward to the Poisson measure $\pi_{\lambda^{\star}}$ on
$\varGamma^\sharp_{\frakX}$ with intensity measure $\lambda^{\star}$
defined in \textup{(\ref{eq:sigma*})}.
\end{corollary}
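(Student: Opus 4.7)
The plan is to apply the Mapping Theorem (Proposition~\ref{pr:mapping}) to the coordinate projection $\varphi:X\times\frakX\to\frakX$ defined by $(x,\bar{y})\mapsto\bar{y}$. This mapping is clearly measurable, and its lift to the configuration spaces via formula~(\ref{eq:di*}),
\[
\varGamma^\sharp_{X\times\frakX}\ni\widehat{\gamma}\mapsto\varphi(\widehat{\gamma})=\bigsqcup_{(x,\bar{y})\in\widehat{\gamma}}\{\bar{y}\}\in\varGamma^\sharp_{\frakX},
\]
is precisely the pointwise projection of configurations mentioned in the statement.

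By Theorem~\ref{th:product-Poisson''}, $\widehat{\mu}_+=\pi_{\widehat{\lambda}_+}$ with intensity measure $\widehat{\lambda}_+(\rd x,\rd\bar{y})=\lambda(\rd x)\,\eta(\rd\bar{y}-x)$. The Mapping Theorem will then yield that the push-forward $\varphi^{*}\pi_{\widehat{\lambda}_+}$ is a Poisson measure on $\varGamma^\sharp_{\frakX}$ whose intensity is $\varphi^{*}\widehat{\lambda}_+\equiv\widehat{\lambda}_+\circ\varphi^{-1}$. The only remaining task will therefore be to identify $\varphi^{*}\widehat{\lambda}_+$ with the measure $\lambda^{\star}$ from Definition~\ref{def:lambda*}. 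This is a short computation: since $\varphi^{-1}(B)=X\times B$ for any $B\in\calB(\frakX)$, applying the explicit form of $\widehat{\lambda}_+$ and Fubini gives
\[
(\varphi^{*}\widehat{\lambda}_+)(B)=\widehat{\lambda}_+(X\times B)=\int_{X}\!\left(\int_{\frakX}{\bf 1}_B(\bar{y})\,\eta(\rd\bar{y}-x)\right)\lambda(\rd x)=\int_{X}\eta(B-x)\,\lambda(\rd x),
\]
which is exactly $\lambda^{\star}(B)$ by definition~(\ref{eq:sigma*}).

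I anticipate no real obstacle: the corollary is essentially an immediate consequence of Theorem~\ref{th:product-Poisson''} combined with the Mapping Theorem, with the identification of push-forward intensity reducing to a one-line change-of-variables. The only technical point worth flagging is that, in order for $\varphi^{*}\pi_{\widehat{\lambda}_+}=\pi_{\lambda^{\star}}$ to exist as a Poisson measure in the sense of Definition~\ref{def:Poisson}, one needs the $\sigma$-finiteness of $\lambda^{\star}$, but this is guaranteed by Proposition~\ref{pr:pi*} under the standing assumptions of the paper and in any case may be established directly from the $\sigma$-finiteness of $\lambda$ and the fact that $\eta$ is a probability measure.
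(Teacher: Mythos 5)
Your proof is correct and takes essentially the same route as the paper: apply the Mapping Theorem (Proposition~\ref{pr:mapping}) to the projection onto the second coordinate and identify the push-forward of $\widehat{\lambda}_+$ with $\lambda^{\star}$ by the same one-line computation using (\ref{eq:hat-sigma}) and (\ref{eq:sigma*}). One caveat on your closing aside: $\sigma$-finiteness of $\lambda^{\star}$ does \emph{not} follow merely from $\sigma$-finiteness of $\lambda$ and $\eta$ being a probability measure (Example~\ref{ex:blowup} exhibits a locally infinite $\lambda^{\star}$, and Proposition~\ref{pr:pi*} needs condition (a-ii)); the paper instead remarks right after this corollary that the present construction shows $\sigma$-finiteness of $\lambda^{\star}$ is not actually necessary for the existence of $\pi_{\lambda^{\star}}$.
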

\proof By the Mapping Theorem (see Proposition \ref{pr:mapping}),
the image of the measure $\widehat{\mu}_+$ under the projection
$(x,\bar{y}+x)\mapsto \bar{y}+x$ is a Poisson measure with intensity
given by the push-forward of the measure (\ref{eq:hat-sigma}), that
is,
$$
\int_{ X}\widehat{\lambda}_+(\rd x,B)=\int_{
X}\eta(B-x)\,\lambda(\rd x)=\lambda^{\star}(B),\qquad
B\in\calB(\frakX),
$$
according to the definition (\ref{eq:sigma*}).
\endproof

\begin{remark}\normalfont
According to Corollary \ref{cor:sigma*}, $\sigma$-finiteness of the
intensity measure $\lambda^{\star}$ (see Proposition~\ref{pr:pi*})
is not necessary for the existence of the Poisson measure
$\pi_{\lambda^{\star}}$.
\end{remark}

Finally, combining Theorems \ref{th:product-Poisson},
\ref{th:product-Poisson''} and Corollary \ref{cor:sigma*} with
Theorem \ref{th:mucl}, we arrive at the following result.
\begin{theorem}\label{th:product-Poisson'''}
Suppose that all the conditions of Theorems
\textup{\ref{th:product-Poisson}} and
\textup{\ref{th:product-Poisson''}} are fulfilled. Then, under the
composition mapping
$$
\tilde{\mathfrak{p}}: (x,\bar{y})\mapsto (x,\bar{y}+x)\mapsto
\bar{y}+x \mapsto \mathfrak{p}(\bar{y}+x),
$$
the Poisson measure $\pi_{\widehat\lambda}$ constructed in Theorem
\textup{\ref{th:product-Poisson}} is pushed forward from the space
$\varGamma^\sharp_{X\times \frakX}$ directly to the space
$\varGamma^\sharp_{X}$ where it coincides with the prescribed
Poisson cluster measure $\mucl$\/,
\begin{equation*}
(\tilde{\mathfrak{p}}^*\pi_{\widehat\lambda})(A)
\equiv\pi_{\widehat\lambda}(\tilde{\mathfrak{p}}^{-1}(A))=\mucl(A),\qquad
A\in\calB(\varGamma^\sharp_ X).
\end{equation*}
\end{theorem}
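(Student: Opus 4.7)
The plan is a straightforward diagram chase: I factor the composite mapping $\tilde{\mathfrak{p}}$ into three elementary measurable maps and then apply the three results already established (Theorem \ref{th:product-Poisson''}, Corollary \ref{cor:sigma*}, and Theorem \ref{th:mucl}) one at a time, invoking the Mapping Theorem (Proposition \ref{pr:mapping}) to transport the Poisson measure along each arrow.

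Concretely, on the single-point level write $\tilde{\mathfrak{p}}=\mathfrak{p}\circ\pi_2\circ T_1$, where $T_1(x,\bar{y}):=(x,\bar{y}+x)$ is the skew shift in the second coordinate on $X\times\frakX$, $\pi_2(x,\bar{y}):=\bar{y}$ is the projection onto the second factor, and $\mathfrak{p}:\frakX\to\varGamma_X^\sharp$ is the unpacking map of (\ref{eq:pr}). Each of these lifts diagonally (via (\ref{eq:di*}) or (\ref{eq:proj})) to a measurable mapping at the level of configuration spaces, yielding the chain
\[
\varGamma^\sharp_{X\times\frakX}\;\xrightarrow{\,T_1\,}\;\varGamma^\sharp_{X\times\frakX}\;\xrightarrow{\,\pi_2\,}\;\varGamma^\sharp_{\frakX}\;\xrightarrow{\,\mathfrak{p}\,}\;\varGamma^\sharp_{X}.
\]

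The first step is to show $T_1^{*}\pi_{\widehat\lambda}=\pi_{\widehat\lambda_+}$. By Proposition \ref{pr:mapping}, $T_1^{*}\pi_{\widehat\lambda}$ is a Poisson measure on $\varGamma^\sharp_{X\times\frakX}$ with intensity $T_1^{*}\widehat\lambda$, and a short computation using Fubini gives, for any $C\in\calB(X\times\frakX)$,
\[
T_1^{*}\widehat\lambda(C)=\int_{X\times\frakX}\mathbf{1}_{C}(x,\bar{y}+x)\,\lambda(\rd x)\,\eta(\rd\bar{y})=\int_{X}\eta(C_x-x)\,\lambda(\rd x),
\]
where $C_x:=\{\bar{y}:(x,\bar{y})\in C\}$; this matches the intensity $\widehat\lambda_+$ from (\ref{eq:hat-sigma}), so $T_1^{*}\pi_{\widehat\lambda}=\pi_{\widehat\lambda_+}$. (Equivalently, this identity is implicit in comparing Theorems \ref{th:product-Poisson} and \ref{th:product-Poisson''}, both of which start from $\pi_\lambda$: the map from $\pi_\lambda$ in the latter factors through the map from $\pi_\lambda$ in the former followed by $T_1$.) The second step is nothing but Corollary \ref{cor:sigma*}, which gives $\pi_2^{*}\pi_{\widehat\lambda_+}=\pi_{\lambda^{\star}}$. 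The third step is Theorem \ref{th:mucl}, which gives $\mathfrak{p}^{*}\pi_{\lambda^{\star}}=\mucl$. Chaining these,
\[
\tilde{\mathfrak{p}}^{*}\pi_{\widehat\lambda}=\mathfrak{p}^{*}\pi_2^{*}T_1^{*}\pi_{\widehat\lambda}=\mathfrak{p}^{*}\pi_2^{*}\pi_{\widehat\lambda_+}=\mathfrak{p}^{*}\pi_{\lambda^{\star}}=\mucl,
\]
which is the claim.

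I do not expect any serious obstacle, since every ingredient has already been built: the only care needed is bookkeeping at Step 1 to verify that the diagonal lift of $T_1$ to $\varGamma^\sharp_{X\times\frakX}$ is measurable and that its pushforward on intensities really equals $\widehat\lambda_+$ (so that the Mapping Theorem applies verbatim despite $\widehat\lambda_+$ not being a product measure). Once that is done, the result is the composition of three pushforwards and nothing new has to be computed. An alternative, equally short route would bypass the intermediate factorisation entirely and evaluate the Laplace functional $L_{\tilde{\mathfrak{p}}^{*}\pi_{\widehat\lambda}}[f]$ directly via Proposition \ref{pr:PoissonLT}, recovering (\ref{eq:ClusterPoissonLT}); but the factorised proof above has the advantage of making transparent the geometric meaning of each arrow.
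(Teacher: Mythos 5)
Your proof is correct and follows essentially the same route as the paper, which simply combines Theorems \ref{th:product-Poisson} and \ref{th:product-Poisson''}, Corollary \ref{cor:sigma*} and Theorem \ref{th:mucl} without further detail; your explicit factorisation $\tilde{\mathfrak{p}}=\mathfrak{p}\circ\pi_2\circ T_1$ and the intensity computation $T_1^{*}\widehat\lambda=\widehat\lambda_+$ merely spell out the bookkeeping the paper leaves implicit.
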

\begin{remark}\normalfont
The construction used in Theorem \ref{th:product-Poisson'''} may
prove instrumental for more complex (e.g., Gibbs) cluster processes,
as it enables one to avoid the intermediate space
$\varGamma^\sharp_{\frakX}$ where the push-forward measure
(analogous to $\pi_{\lambda^{\star}}$) may have no explicit
description.
\end{remark}

\section{Quasi-invariance and integration by
parts}\label{sec:QI-IBP}

From now on, we restrict ourselves to the case where
$X=\mathbb{R}^d$. We shall assume throughout that conditions (a-i)
and (a-ii) of Theorem \ref{th:properClusterPoisson} are fulfilled,
so that $\mucl$-a.a.\ configurations $\gamma\in\varGamma^\sharp_X$
are locally finite. Furthermore, all clusters are assumed to be
a.s.\ finite, hence $p_\infty\equiv\mu_0\{\nu_0=\infty\}=0$ and the
component $X^\infty$ may be dropped from the disjoint union
$\frakX=\bigsqcup_{n}\myn X^n$. We shall also require the absolute
continuity of the measure $\lambda^\star$ (see the corresponding
necessary and sufficient conditions in
Corollary~\ref{cor:sigma*a.c.}). By Proposition~\ref{pr:sigma*a.c.},
this implies that configurations $\gamma$ are $\mucl$-a.s.\ simple
(i.e., have no multiple points). In particular, these assumptions
ensure that $\mucl$-a.a.\ configurations $\gamma$ belong to the
proper configuration space $\varGamma_X$.

Under these conditions, in this section we prove the
quasi-invariance of the measure $\mucl$ with respect to the action
of compactly supported diffeomorphisms of $X$ and establish an
integration-by-parts formula. We begin with a brief description of
some convenient ``manifold-like'' concepts and notations first
introduced in \cite{AKR1}, which provide the suitable framework for
analysis on configuration spaces.

\subsection{Differentiable functions on configuration spaces}\label{app2}

Let $T_{x} X$ be the tangent space of $X=\RR^d$ at point $x\in X$.
It can be identified in the natural way with $\mathbb{R}^{d}$, with
the corresponding (canonical) inner product denoted by a ``fat''
dot~$\CD$\,. The gradient on $X$ is denoted by $\nabla$. Following
\cite{AKR1}, we define the ``tangent space'' of the configuration
space $\varGamma_{X}$ at $\gamma\in\varGamma_{X}$ as the Hilbert
space $T_{\gamma }\varGamma _{ X}:=L^{2}( X\rightarrow T
X;\,\rd\gamma )$, or equivalently
$T_{\gamma}\varGamma_{X\myn}=\bigoplus_{x\in\gamma} T_{x} X$. The
scalar product in $T_{\gamma }\varGamma _{ X}$ is denoted by
$\langle \cdot,\cdot \rangle _{\gamma }$. A vector field $V$ over
$\varGamma _{ X}$ is a mapping $\varGamma _{ X}\ni \gamma \mapsto
V(\gamma )=(V(\gamma )_{x})_{x\in \gamma }\in T_{\gamma }\varGamma
_{X}$. Thus, for vector fields $V_1,V_2$ over $\varGamma_{ X}$ we
have
\begin{equation*}
\left\langle V_1(\gamma ),V_2(\gamma )\right\rangle _{\gamma
}=\sum_{x\in \gamma } V_1(\gamma )_{x}\CD V_2(\gamma
)_{x}\mypp,\qquad \gamma\in \varGamma_X.
\end{equation*}

For $\gamma \in \varGamma _{ X}$ and $x\in \gamma $, denote by
$\mathcal{O}_{\gamma ,\myp x}$ an arbitrary open neighbourhood of
$x$ in $ X$ such that $\mathcal{O}_{\gamma ,\myp x}\cap \gamma
=\{x\}$. For any measurable function $F:\varGamma _{ X}\rightarrow
{\mathbb{R}}$, define the function $F_{x}(\gamma ,\cdot
):\mathcal{O}_{\gamma,\myp x} \rightarrow \allowbreak \mathbb{R}$ by
$F_{x}(\gamma ,y):=F((\gamma \setminus \{x\})\cup \{y\})$, and set
\begin{equation*}
\nabla _{\mynn x}F(\gamma ):=\left. \nabla F_{x}(\gamma
,y)\right|_{y=x},\qquad x\in X,
\end{equation*}
provided $F_{x}(\gamma,\cdot )$ is differentiable at $x$.

Denote by $\mathcal{FC}(\varGamma _{X})$ the class of functions on
$\varGamma _{ X}$ of the form
\begin{equation}\label{local-funct}
F(\gamma )=f(\langle \phi_{1},\gamma \rangle ,\dots ,\langle
\phi_{k},\gamma \rangle ),\qquad \gamma \in \varGamma _{ X},
\end{equation}
where $k\in \mathbb{N}$, \,$f\in C_{b}^{\infty }(\mathbb{R}^{k})$
($:=$ the set of $C^{\infty }$-functions on $\mathbb{R}^{k}$ bounded
together with all their derivatives), and
$\phi_{1},\dots,\phi_{k}\in C_{0}^{\infty }( X)$ ($:=$ the set of
$C^{\infty }$-functions on $ X$ with compact support). Each $F\in
\mathcal{FC}(\varGamma _{ X})$ is local, that is, there is a compact
set $K\subset X$ (which may depend on $F$) such that
$F(\gamma)=F(\gamma_K)$ for all $\gamma \in \varGamma _{ X}$. Thus,
for a fixed $\gamma $ there are only finitely many non-zero
derivatives $\nabla _{\myn x}F(\gamma)$.

For a function $F\in \mathcal{FC}(\varGamma _{X})$, its
$\varGamma$-gradient $\nabla^{\varGamma\myn}F$ is defined as
follows:
\begin{equation}\label{eq:G-gradient}
\nabla^{\varGamma\myn} F(\gamma ):=(\nabla_{\myn x}F(\gamma ))_{x\in
\gamma }\in T_{\gamma }\varGamma _{ X},\qquad\gamma \in \varGamma
_{X},
\end{equation}
so the directional derivative of $F$ along a vector field $V$ is
given by
\begin{equation*}
\nabla_{\myn V}^{\varGamma\myn}F(\gamma ):=\langle \nabla^{\varGamma
}\mynn F(\gamma ),V(\gamma )\rangle _{\gamma }=\sum_{x\in \gamma}
\nabla _{\mynn x}F(\gamma )\myn\CD V(\gamma )_{x},\qquad \gamma \in
\varGamma _{X}.
\end{equation*}
Note that the sum on the right-hand side contains only finitely many
non-zero terms. Further, let $\mathcal{FV}(\varGamma_{X})$ be the
class of cylinder vector fields $V$ on $\varGamma _{ X}$ of the form
\begin{equation}\label{vf}
V(\gamma )_{x}=\sum_{i=1}^{k}A_{i}(\gamma )\mypp v_{i}(x)\in T_{x}
X,\qquad x\in X,
\end{equation}
where $A_{i}\in \mathcal{FC}(\varGamma _{ X})$ and $v_{i}\in
\Vect_{0}( X)$ ($:=$ the space of compactly supported
$C^\infty$-smooth vector fields on $X$), \,$i=1,\dots,k$ \,($k\in
\mathbb{N}$). Any vector filed $v\in \Vect_{0}( X)$ generates a
constant vector field $V$ on $\varGamma _{ X}$ defined by
$V(\gamma)_{x}:=v(x)$. We shall preserve the notation $v$ for it.
Thus,
\begin{equation}\label{eq:grad-new}
\nabla_{\myn v}^{\varGamma }F(\gamma )=\sum_{x\in\gamma }
\nabla_{\myn x}F(\gamma )\myn\CD v(x),\qquad \gamma\in\varGamma_X.
\end{equation}

Recall (see Proposition \ref{pr:properPoisson}\myp(a)) that if
$\lambda (\varLambda )<\infty$ then $\gamma(\varLambda)<\infty$ for
$\pi _{\lambda }$-a.a. $\gamma \in \varGamma _{ X}$. This motivates
the definition of the class $\mathcal{FC}_{\lambda }(\varGamma _{
X})$ of functions on $\varGamma _{ X}$ of the form
(\ref{local-funct}), where $\phi_{1},\dots ,\phi_{k}$ are $C^{\infty
}$-functions with $\lambda (\supp\phi_{i})<\infty $, $i=1,\dots ,k$.
Any function $F\in \mathcal{FC}_{\lambda }(\varGamma _{ X})$ is
local in the sense that there exists a set $B\in\mathcal{B}(X)$
(depending on $F$) such that $\lambda (B)<\infty $ and $F(\gamma
)=F(\gamma_B)$ for all $\gamma \in \varGamma _{ X}$. As in the case
of functions from $\mathcal{FC}(\varGamma _{ X})$, for a fixed
$\gamma$ there are only finitely many non-zero derivatives
$\nabla_{\mynn x}F(\gamma)$.

The approach based on ``lifting'' the differential structure from
the underlying space $X$ to the configuration space $\varGamma_X$ as
described above can also be applied to the spaces
$\frakX=\bigsqcup_{n=0}^\infty X^n$ and $\varGamma_\frakX$. First of
all, the space $\frakX$ is endowed with the natural differential
structure inherited from the constituent spaces $X^n$. Namely, the
tangent space of $\frakX$ at point $\bar{x}\in\frakX$ is defined
piecewise as $T_{\bar{x}}\frakX: = T_{\bar{x}}X^n$ for $\bar{x}\in
X^n$ ($n\in\ZZ_+$), with the scalar product in $T_{\bar{x}}\frakX$
induced from the tangent spaces $T_{\bar{x}}X^n$ and again denoted
by the dot~$\CD$\mypp; furthermore, for a function $f :
\frakX\to\RR$ its gradient $\nabla\myn f$ acts on each space $X^n$
as $\nabla\myn f(\bar{x}) = (\nabla_{\myn
x_1}f(\bar{x}),\dots,\nabla_{\myn x_n}f(\bar{x}))\in
T_{\bar{x}}X^n$, where $\nabla_{\myn x_i}$ is the ``partial''
gradient with respect to the component $x_i\in \bar{x}\in X^n$. A
vector field on $\frakX$ is a map $\mathfrak{X}\ni \bar{x}\mapsto
V(\bar{x})\in T_{\bar{x}}\mathfrak{X}$; in other words, the
restriction of $V$ to $X^n$ is a vector field on $X^{n}$
($n\in\ZZ_+$). The derivative of a function $f:\frakX\to\RR$ along a
vector field $V$ on $\frakX$ is then defined by $\nabla_{\myn V}
f(\bar{x}): = \nabla\myn f(\bar{x}) \CD V (\bar{x})$
\,($\bar{x}\in\frakX$).

The functional class $C^\infty(\frakX)$ is defined, as usual, as the
set of $C^\infty$-functions $f:\frakX\rightarrow \RR$; similarly,
$C^\infty_0(\frakX)$ is the subclass of $C^\infty(\frakX)$
consisting of functions with compact support. Since
differentiability is a local property, $C^\infty(\frakX)$ admits a
component-wise description: $f\in C^\infty(\frakX)$ if and only if
for each $n\in\ZZ_+$ the restriction of $f$ to $X^n$ is in
$C^\infty(X^n)$. However, this is not true for the class
$C^k_0(\frakX)$ which, according to Remark \ref{rm:compact},
involves a stronger condition that $f(\bar{x})\equiv \bar{x}$
\,($\bar{x}\in X^n$) for all large enough~$n$.

Now, lifting this differentiable structure from the space $\frakX$
to the configuration space $\varGamma_\frakX$ can be done by
repeating the same constructions as before with only obvious
modifications, so we do not dwell on details. This way, we introduce
the tangent space $T_{\bar{\gamma}}\varGamma
_{\frakX}=\bigoplus_{\bar{x}\in \bar{\gamma}}T_{\bar{x}}{\frakX}$,
vector fields $V$ over $\varGamma_{\frakX}$, and differentiable
functions $\varPhi:\varGamma_{\frakX}\to\RR$. Similarly to
(\ref{local-funct}) and (\ref{vf}) one can define the spaces
$\mathcal{FC}(\varGamma_{\frakX})$, $\mathcal{FC}_{\lambda ^{\star
}}(\varGamma _{{\frakX}})$ and $\mathcal{FV}(\varGamma _{ \frakX})$
of $C^\infty$-smooth local functions and vector fields on $\frakX$,
and we shall use these notations without further explanation.

\subsection{$\Diff_{0}$-quasi-invariance}
\label{sec:QI-mu}

In this section, we discuss the property of quasi-invariance of the
measure $\mucl$ with respect to diffeomorphisms of $X$. Let us start
by describing how diffeomorphisms of $X$ act on configuration
spaces. For a measurable mapping $\varphi: X\to X$, its
\emph{support} $\supp\varphi$ is defined as the smallest closed set
containing all $x\in X$ such that $\varphi(x)\ne x$. Let $\Diff_{0}(
X)$ be the group of diffeomorphisms of $ X$ with \emph{compact
support}. For any $\varphi \in \Diff_{0}( X)$, we define the
``diagonal'' diffeomorphism $\bar{\varphi}: \frakX\rightarrow
\frakX$ acting on each space $X^n$ ($n\in\ZZ_+$) as follows:
\begin{equation*}
X^n\ni \bar{x}=(x_{1},\dots ,x_{n})\mapsto
\bar{\varphi}(\bar{x}):=(\varphi (x_{1}),\dots ,\varphi (x_{n}))\in
X^n.
\end{equation*}

\begin{remark}\label{supp}
\normalfont Although $K:=\supp\varphi$ is compact in $X$, note that
the support of the diffeomorphism $\bar {\varphi }$ (again defined
as the closure of the set $\{\bar{x}\in \frakX: \varphi(\bar{x})\ne
\bar{x}\}$) is given by $\supp\bar{\varphi}=\frakX_{K}$
(see~(\ref{eq:K})) and hence is \emph{not} compact in the topology
of $\frakX$ (see Remark~\ref{rm:compact}). However,
$\lambda^{\star}(\frakX_{K})<\infty$ (by Proposition \ref{prop1}),
which is sufficient for our purposes.
\end{remark}

The mappings $\varphi $ and $\bar{\varphi}$ can be lifted to
measurable ``diagonal'' transformations (denoted by the same
letters) of the configuration spaces $\varGamma _{ X}$ and
$\varGamma_{\frakX}$, respectively:
\begin{equation}\label{di*}
\begin{aligned}
\varGamma_{X}\ni\gamma\mapsto \varphi(\gamma): ={}&\{\varphi(x),\
x\in\gamma\}\in \varGamma_{X},\\[.2pc]
\varGamma_{\frakX}\ni\bar{\gamma}\mapsto
\bar{\varphi}(\bar{\gamma}):={}&\{\bar{\varphi}(\bar{x}),\
\bar{x}\in\bar{\gamma}\}\in \varGamma_{\frakX}\myp.
\end{aligned}
\end{equation}

Let ${\mathcal{I}}:L^{2}(\varGamma _{ X},\mucl)\rightarrow
L^{2}(\varGamma _{\frakX},\pi _{\lambda^{\star}})$ be the isometry
defined by the projection $\mathfrak{p}$,
\begin{equation}\label{eq:I}
({\mathcal{I}}F) (\bar{\gamma
}):=F(\mathfrak{p}(\bar{\gamma})),\qquad
\bar{\gamma}\in\varGamma_{\frakX},
\end{equation}
and let ${\mathcal{I}}^{*}:L^{2}(\varGamma _{\frakX},\pi
_{\lambda^{\star}})\rightarrow L^{2}(\varGamma _{ X},\mucl)$ be the
adjoint operator.

\begin{remark}\label{rem:I}
\normalfont The definition implies that $\mathcal{I}^*\mathcal{I}$
is the identity operator in $L^{2}(\varGamma _{X},\mucl)$. However,
the operator $\mathcal{I}\mathcal{I}^*$ acting in the space
$L^2(\varGamma_\frakX,\pi_{\lambda^\star})$ is a non-trivial
orthogonal projection, which plays the role of an infinite particle
symmetrization operator. Unfortunately, general explicit form of the
operators $\mathcal{I}^*$ and $\mathcal{I}\mathcal{I}^*$ is not
known, and may be hard to obtain.
\end{remark}

By the next lemma, the action of $\Diff_{0}( X)$ commutes with the
operators $\mathfrak{p}$ and~${\mathcal{I}}$.

\begin{lemma}\label{lm:comm1}
For any $\varphi\in\Diff_0( X)$, we have $\varphi\circ
\mathfrak{p}=\mathfrak{p}\circ \bar{\varphi}$ and furthermore,
${\mathcal{I}}(F\circ \varphi )=({\mathcal{I}}F)\circ \bar{\varphi}$
\,for any\/ $F\in L^{2}(\varGamma _{ X},\mucl)$.
\end{lemma}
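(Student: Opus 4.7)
The proof is essentially a diagram-chase through the definitions; there is no serious analytic content. I split it into two steps corresponding to the two assertions.

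\textbf{Step 1 (intertwining of the projections).} I would first verify the identity $\varphi\circ\mathfrak{p}=\mathfrak{p}\circ\bar{\varphi}$ at the level of the underlying map $\mathfrak{p}\colon\frakX\to\varGamma_X^\sharp$ defined in~(\ref{eq:pr}). For $\bar{x}=(x_1,x_2,\dots)\in\frakX$, the definition of $\bar{\varphi}$ gives
\begin{equation*}
\mathfrak{p}\bigl(\bar{\varphi}(\bar{x})\bigr)
=\mathfrak{p}\bigl((\varphi(x_1),\varphi(x_2),\dots)\bigr)
=\bigsqcup_{i}\{\varphi(x_i)\}
=\varphi\!\left(\bigsqcup_{i}\{x_i\}\right)
=\varphi\bigl(\mathfrak{p}(\bar{x})\bigr),
\end{equation*}
where the third equality uses the definition of the lifted diffeomorphism in~(\ref{di*}). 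I would then lift this identity to $\varGamma_\frakX^\sharp$ using~(\ref{eq:proj}): for $\bar{\gamma}\in\varGamma_\frakX^\sharp$,
\begin{equation*}
\mathfrak{p}\bigl(\bar{\varphi}(\bar{\gamma})\bigr)
=\bigsqcup_{\bar{x}\in\bar{\gamma}}\mathfrak{p}\bigl(\bar{\varphi}(\bar{x})\bigr)
=\bigsqcup_{\bar{x}\in\bar{\gamma}}\varphi\bigl(\mathfrak{p}(\bar{x})\bigr)
=\varphi\!\left(\bigsqcup_{\bar{x}\in\bar{\gamma}}\mathfrak{p}(\bar{x})\right)
=\varphi\bigl(\mathfrak{p}(\bar{\gamma})\bigr),
\end{equation*}
where I use that the diagonal lift of $\varphi$ to generalized configurations commutes with disjoint unions (which is immediate from~(\ref{di*})).

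\textbf{Step 2 (intertwining on $L^2$).} The second identity then follows by direct computation. For $\bar{\gamma}\in\varGamma_\frakX^\sharp$, unfolding the definition~(\ref{eq:I}) of $\mathcal{I}$ and applying Step~1 yields
\begin{equation*}
\bigl(\mathcal{I}(F\circ\varphi)\bigr)(\bar{\gamma})
=(F\circ\varphi)\bigl(\mathfrak{p}(\bar{\gamma})\bigr)
=F\bigl(\varphi(\mathfrak{p}(\bar{\gamma}))\bigr)
=F\bigl(\mathfrak{p}(\bar{\varphi}(\bar{\gamma}))\bigr)
=(\mathcal{I}F)\bigl(\bar{\varphi}(\bar{\gamma})\bigr)
=\bigl((\mathcal{I}F)\circ\bar{\varphi}\bigr)(\bar{\gamma}).
\end{equation*}

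\textbf{Remark on well-posedness.} The only subtlety is that to make sense of $F\circ\varphi$ as an element of $L^2(\varGamma_X,\mucl)$ one needs a quasi-invariance statement for $\mucl$ (Theorem~\ref{q-inv}), which logically comes \emph{after} this lemma. The cleanest reading is therefore to interpret the equality pointwise (on all of $\varGamma_\frakX^\sharp$, for fixed measurable representatives), which is what the computation above actually establishes; the $L^2$ version is then automatic once the quasi-invariance of $\mucl$ has been proved. I would mention this briefly at the start of the proof but not belabour it, as no real difficulty arises: the lemma is a pure compatibility statement between the unpacking map $\mathfrak{p}$ and the coordinatewise action of $\Diff_0(X)$.
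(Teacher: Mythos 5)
Your proof is correct and follows exactly the same route as the paper's (which merely states that both identities follow from the definitions of $\mathfrak{p}$, $\bar{\varphi}$ and $\mathcal{I}$); you have simply written out the diagram chase explicitly, and your well-posedness remark is a sensible minor addition.
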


\proof The first statement  follows from the definition
(\ref{eq:proj}) of the mapping $\mathfrak{p}$ and the diagonal form
of $\bar{\varphi}$ (see~(\ref{di*})). The second statement then
readily follows by the definition (\ref{eq:I}) of the
operator~$\mathcal{I}$.
\endproof

Let us now consider the configuration space $\varGamma _{\frakX}$
equipped with the Poisson measure $\pi _{\lambda^{\star}}$
introduced in Section~\ref{sec:3.2}. As already mentioned, we assume
that the intensity measure $\lambda^{\star}$ is a.c.\ with respect
to the Lebesgue measure on $\mathfrak{X}$ and, moreover,
\begin{equation}\label{QI}
s(\bar{x}):=\frac{\lambda^{\star}(\rd\bar{x})}{\rd\bar{x}}>0\qquad
\text{for \,a.a.}\ \,\bar{x}\in\mathfrak{X}.
\end{equation}
This implies that the measure $\lambda^{\star}$ is quasi-invariant
with respect to the action of diagonal transformations
$\bar{\varphi}:\frakX\to\frakX$ \,($\varphi \in \Diff_{0}( X)$) and
the corresponding Radon--Nikodym derivative is given by
\begin{equation}\label{density'}
\rho _{\lambda^{\star}}^{\bar{\varphi
}}(\bar{x})=\frac{s(\bar{\varphi
}^{-1}(\bar{x}))}{s(\bar{x})}\,J_{\bar{\varphi}}(\bar{x})^{-1}\qquad
\text{for \ $\lambda^{\star}$-a.a. }\,\bar{x},
\end{equation}
where $J_{\bar{\varphi}}$ is the Jacobian determinant of
$\bar{\varphi }$ (we set $\rho _{\lambda^{\star}}^{\bar{\varphi}}
(\bar{x})=1$ if $s(\bar{x})=0$ or
$s(\bar{\varphi}^{-1}(\bar{x}))=0$).

\begin{proposition}\label{Poisson-qi}
The Poisson measure $\pi _{\lambda^{\star}}\!$ is quasi-invariant
with respect to the action of\/ diagonal diffeomorphisms\/
$\bar{\varphi}:\varGamma_{\frakX}\to\varGamma_{\frakX}$
\,\textup{(}$\varphi \in \Diff_{0}(X)$\textup{)}. The corresponding
Radon--Nikodym density $R_{\pi _{\lambda^{\star}}}^{\bar{\varphi
}}\!:=\rd(\bar\varphi^*\pi _{\lambda^{\star}})/\rd \pi
_{\lambda^{\star}}$ is given by
\begin{equation}\label{RND}
R_{\pi _{\lambda^{\star}}}^{\bar{\varphi }}(\bar{\gamma })= \exp
\left\{\int_{\frakX}\bigl(1-\rho _{\lambda^{\star}}^{\bar{\varphi
}}(\bar{x})\bigr)\,\lambda^{\star}(\rd\bar{x})\right\}\cdot\prod_{\bar{x}\in
\bar{\gamma }}\rho _{\lambda^{\star}}^{\bar{\varphi
}}(\bar{x}),\qquad \bar{\gamma}\in\varGamma_{\frakX},
\end{equation}
where $\rho _{\lambda^{\star}}^{\bar{\varphi}}$ is defined in\/
\textup{(\ref{density'})}.
\end{proposition}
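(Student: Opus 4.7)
\emph{Proof proposal.} The plan is to compute and compare Laplace functionals, exploiting the localization afforded by the compact support of $\varphi$. By Remark~\ref{supp}, $\bar\varphi$ acts as the identity outside $\frakX_K$ where $K:=\supp\varphi$, and by Proposition~\ref{prop1} we have $\lambda^\star(\frakX_K)<\infty$. Hence $\rho^{\bar\varphi}_{\lambda^\star}(\bar x)\equiv 1$ for $\lambda^\star$-a.a.\ $\bar x\notin\frakX_K$, and the prefactor integral in \textup{(\ref{RND})} reduces to an absolutely convergent integral over the set $\frakX_K$ of finite $\lambda^\star$-measure. Moreover, by Proposition~\ref{pr:properPoisson}\,(a), $\bar\gamma(\frakX_K)<\infty$ for $\pi_{\lambda^\star}$-a.a.\ $\bar\gamma$, so only finitely many factors in the product in \textup{(\ref{RND})} differ from~$1$, making $R^{\bar\varphi}_{\pi_{\lambda^\star}}(\bar\gamma)$ well defined.

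Next, for $f\in\mathrm{M}_+(\frakX)$, using $\langle f,\bar\varphi(\bar\gamma)\rangle=\langle f\circ\bar\varphi,\bar\gamma\rangle$ and applying Proposition~\ref{pr:PoissonLT} to $f\circ\bar\varphi$, together with the change of variables $\int(g\circ\bar\varphi)\,\rd\lambda^\star=\int g\mypp\rho^{\bar\varphi}_{\lambda^\star}\,\rd\lambda^\star$ (from \textup{(\ref{density'})}), I would obtain
\begin{equation*}
L_{\bar\varphi^*\pi_{\lambda^\star}}[f]=\exp\left\{-\int_\frakX\bigl(1-\re^{-f(\bar y)}\bigr)\rho^{\bar\varphi}_{\lambda^\star}(\bar y)\,\lambda^\star(\rd\bar y)\right\},
\end{equation*}
which identifies $\bar\varphi^*\pi_{\lambda^\star}$ with the Poisson measure of intensity $\rho^{\bar\varphi}_{\lambda^\star}\cdot\lambda^\star$, consistent with the Mapping Theorem (Proposition~\ref{pr:mapping}).

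It remains to verify that $R^{\bar\varphi}_{\pi_{\lambda^\star}}\cdot\pi_{\lambda^\star}$ has the same Laplace functional. Writing $\rho:=\rho^{\bar\varphi}_{\lambda^\star}$ and absorbing the product factor in \textup{(\ref{RND})} into the exponent gives
\begin{equation*}
\int_{\varGamma^\sharp_\frakX}\re^{-\langle f,\bar\gamma\rangle}R^{\bar\varphi}_{\pi_{\lambda^\star}}(\bar\gamma)\,\pi_{\lambda^\star}(\rd\bar\gamma)=\exp\left\{\int(1-\rho)\,\rd\lambda^\star\right\}\int\re^{-\langle f-\ln\rho,\bar\gamma\rangle}\,\pi_{\lambda^\star}(\rd\bar\gamma).
\end{equation*}
Applying Proposition~\ref{pr:PoissonLT} (suitably extended to sign-changing exponents, see below) to $f-\ln\rho$ and combining exponentials yields $\exp\{-\int\rho(1-\re^{-f})\,\rd\lambda^\star\}$, matching the expression found for $L_{\bar\varphi^*\pi_{\lambda^\star}}[f]$. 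Since Laplace functionals determine measures on $\calB(\varGamma^\sharp_\frakX)$ (Section~\ref{sec:General}), the identity \textup{(\ref{RND})} follows, and quasi-invariance is an immediate consequence.

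The main technical delicacy lies in extending Proposition~\ref{pr:PoissonLT} to the exponent $f-\ln\rho$, which need not be non-negative and may even be $+\infty$ where $\rho=0$. For the first issue, observe that $f-\ln\rho=f\geq 0$ off $\frakX_K$ while $|\ln\rho|$ is bounded on $\frakX_K$ (from the explicit form \textup{(\ref{density'})} and compactness of $\supp\varphi$), so a standard truncation and dominated-convergence argument extends the Laplace formula to this setting, the crucial integrability of $1-\re^{-(f-\ln\rho)}$ against $\lambda^\star$ being secured by $\lambda^\star(\frakX_K)<\infty$. The second issue is resolved by \textup{(\ref{QI})}: $\{\rho=0\}$ is $\lambda^\star$-null, hence negligible for all $\lambda^\star$-integrals, and by Proposition~\ref{pr:properPoisson}\,(a) the $\pi_{\lambda^\star}$-a.a.\ configuration $\bar\gamma$ avoids this null set intersected with $\frakX_K$, so the product in \textup{(\ref{RND})} is well defined on a set of full $\pi_{\lambda^\star}$-measure.
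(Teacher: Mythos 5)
Your proposal is correct and follows essentially the same route as the paper: the paper reduces the statement to the general Proposition~\ref{q-i-Poisson} in the Appendix, whose proof is precisely your Laplace-functional comparison of $R^{\bar{\varphi}}_{\pi_{\lambda^{\star}}}\cdot\pi_{\lambda^{\star}}$ with $\bar{\varphi}^{*}\pi_{\lambda^{\star}}$ via the Mapping Theorem, with $\lambda^{\star}(\frakX_{K})<\infty$ (Proposition~\ref{prop1}) securing well-definedness of the prefactor and of the product. One small correction: your parenthetical claim that $|\ln\rho^{\bar{\varphi}}_{\lambda^{\star}}|$ is bounded on $\frakX_{K}$ is not justified (the density ratio in (\ref{density'}) need not be bounded away from $0$ or $\infty$ there), but it is also not needed, since the truncation/dominated-convergence step only requires the domination $|1-\re^{-f}\rho^{\bar{\varphi}}_{\lambda^{\star}}|\le 1+\rho^{\bar{\varphi}}_{\lambda^{\star}}\in L^{1}(\frakX_{K},\lambda^{\star})$, which follows from $\int_{\frakX_{K}}\rho^{\bar{\varphi}}_{\lambda^{\star}}\,\rd\lambda^{\star}=\lambda^{\star}(\bar{\varphi}^{-1}(\frakX_{K}))=\lambda^{\star}(\frakX_{K})<\infty$.
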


\proof The result follows from Remark \ref{supp} and Proposition
\ref{q-i-Poisson} in the Appendix below (applied to the space
$\mathfrak{X}$ with measure $\lambda^{\star}$ and mapping
$\bar{\varphi}$).
\endproof

\begin{remark}\normalfont
The function $R_{\pi_{\lambda^{\star}}}^{\bar{\varphi}}$ is local in
the sense that, for $\pi_{\lambda^{\star}}$-a.a.\
$\bar{\gamma}\in\varGamma_{\frakX}$, we have
$R_{\pi_{\lambda^{\star}}}^{\bar{\varphi}}(\bar{\gamma})
=R_{\pi_{\lambda^{\star}}}^{\bar{\varphi}}(\bar{\gamma }\cap
\frakX_{K})$, where $K:=\supp\varphi $.
\end{remark}

\begin{remark}
[Explicit form of $R_{\pi_{\lambda^{\star}}}^{\bar{\varphi}}$]
\label{Explicit} \normalfont Let the measure $\eta(\rd\bar{y})$ be
a.c.\ with respect to Lebesgue measure $\rd\bar{y}$ on $\frakX$,
with density $h(\bar{y})$ (see~(\ref{eq:m})). According to
(\ref{density'}),
\begin{equation*}
\rho_{\lambda^{\star}}^{\bar{\varphi }}(\bar{y})=\frac{\int_{ X}
h(\varphi ^{-1}(y_{1})-x,\dots ,\varphi ^{-1}(y_{n})-x)\,\lambda
(\rd x)}{\int_{ X}h(y_{1}-x,\dots ,y_{n}-x)\,\lambda (\rd
x)}\prod_{i=1}^n J_{\varphi}(y_{i})^{-1},\qquad \bar{y}\in X^n,
\end{equation*}
where $J_{\varphi}(\bar{y})=\det(\partial \varphi_i/\partial y_j)$
is the Jacobian determinant of $\varphi $ (note that
$J_{\bar{\varphi}}(\bar{y})=\prod_{i=1}^n J_{\varphi}(y_{i})$ for
$\bar{y}\in X^n$). Then $R_{\pi _{\lambda^{\star}}}^{\bar{\varphi
}}(\bar{\gamma })$ can be calculated using formula (\ref{RND}). In
particular, if clusters have i.i.d.\ points, so that
$h(\bar{y})=\prod_{i=1}^n h_0(y_{i})$, then
\begin{equation*}
\rho _{\lambda^{\star}}^{\bar{\varphi }}(\bar{y})=\frac{\int_{ X}
\prod_{i=1}^n J_{\varphi}(y_{i})^{-1}\myp h_0(\varphi
^{-1}(y_{i})-x)\,\lambda (\rd x)}{\int_{ X} \prod_{i=1}^n
h_0(y_{i}-x)\,\lambda (\rd x)}\mypp, \qquad
\bar{y}=(y_1,\dots,y_n)\in X^n,
\end{equation*}
and
\begin{equation*}
R_{\pi _{\lambda^{\star}}}^{\bar{\varphi }}(\bar{\gamma})=
C\prod_{\bar{y}\in \bar{\gamma }}\frac{\int_{ X} \prod_{y\in
\bar{y}}J_{\varphi}(y)^{-1}\myp h_0(\varphi ^{-1}(y)-x)\,\lambda
(\rd x)}{\int_{ X} \prod_{y\in \bar{y}}h_0(y-x)\,\lambda (\rd
x)}\mypp,\qquad \bar{\gamma}\in\varGamma_{\mathfrak{X}},
\end{equation*}
where\/ $C:=\exp\left\{\int_{\mathfrak{X}}(1-\rho
_{\lambda^{\star}}^{\bar{\varphi }}(\bar{y}))\,
\lambda^{\star}(\rd\bar{y})\right\}$ is a normalizing constant.
\end{remark}

Now we can prove the main result of this section.

\begin{theorem}\label{q-inv}
Under condition \textup{(\ref{QI})}, the Poisson cluster measure
$\mucl$ on $\varGamma_{X}$ is quasi-invariant with respect to the
action of\/ $\Diff_{0}({ X})$ on $\varGamma_{X}$. The Radon--Nikodym
density $R_{\mucl}^{\varphi}:=\rd(\varphi^*\mucl)/\rd\mucl$ is given
by $R_{\mucl}^{\varphi}={\mathcal{I}}^{*}
R_{\pi_{\lambda^{\star}}}^{\bar{\varphi}}$, where the density
$R_{\pi_{\lambda^{\star\!}}}^{\bar{\varphi}}=\rd(\bar\varphi^*\pi
_{\lambda^{\star}})/\rd \pi _{\lambda^{\star}}$ is defined in
\textup{(\ref{RND})}.
\end{theorem}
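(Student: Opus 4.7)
The plan is to derive the quasi-invariance of $\mucl$ directly from that of the auxiliary Poisson measure $\pi_{\lambda^\star}$ (Proposition \ref{Poisson-qi}) by exploiting the projection identity $\mucl=\mathfrak{p}^*\pi_{\lambda^\star}$ (Theorem \ref{th:mucl}) together with the intertwining $\varphi\circ\mathfrak{p}=\mathfrak{p}\circ\bar\varphi$ supplied by Lemma \ref{lm:comm1}. Using the elementary chain rule $(\psi_1\circ\psi_2)^*=\psi_1^*\circ\psi_2^*$ for push-forwards, I would first obtain the measure-theoretic identity
\begin{equation*}
\varphi^*\mucl=\varphi^*\mathfrak{p}^*\pi_{\lambda^\star}=(\varphi\circ\mathfrak{p})^*\pi_{\lambda^\star}=(\mathfrak{p}\circ\bar\varphi)^*\pi_{\lambda^\star}=\mathfrak{p}^*(\bar\varphi^*\pi_{\lambda^\star}),
\end{equation*}
and then insert the Radon--Nikodym formula $\bar\varphi^*\pi_{\lambda^\star}=R_{\pi_{\lambda^\star}}^{\bar\varphi}\mypp\pi_{\lambda^\star}$ provided by Proposition \ref{Poisson-qi}.

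To identify the density of $\varphi^*\mucl$ with respect to $\mucl$, I would test against an arbitrary bounded measurable $F$ on $\varGamma_X$. The push-forward formula combined with the definition \textup{(\ref{eq:I})} of $\mathcal{I}$ yields
\begin{equation*}
\int_{\varGamma_X}\mynn F\,\rd(\varphi^*\mucl)=\int_{\varGamma_\frakX}(F\circ\mathfrak{p})\mypp R_{\pi_{\lambda^\star}}^{\bar\varphi}\,\rd\pi_{\lambda^\star}=\int_{\varGamma_\frakX}(\mathcal{I}F)\mypp R_{\pi_{\lambda^\star}}^{\bar\varphi}\,\rd\pi_{\lambda^\star},
\end{equation*}
which by the defining duality of the adjoint $\mathcal{I}^{*}$ equals $\int_{\varGamma_X} F\cdot(\mathcal{I}^{*}R_{\pi_{\lambda^\star}}^{\bar\varphi})\,\rd\mucl$. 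Since $F$ is arbitrary, this establishes $\varphi^*\mucl\ll\mucl$ with $\rd(\varphi^*\mucl)/\rd\mucl=\mathcal{I}^{*}R_{\pi_{\lambda^\star}}^{\bar\varphi}$, proving the announced formula. Applying the same argument to $\varphi^{-1}\in\Diff_0(X)$ in place of $\varphi$ produces $(\varphi^{-1})^*\mucl\ll\mucl$ and hence the converse absolute continuity $\mucl\ll\varphi^*\mucl$, giving full quasi-invariance.

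The one delicate point is that $R_{\pi_{\lambda^\star}}^{\bar\varphi}$ is only guaranteed to lie in $L^1(\pi_{\lambda^\star})$ (with integral $1$) and not in $L^2(\pi_{\lambda^\star})$, so the expression $\mathcal{I}^{*}R_{\pi_{\lambda^\star}}^{\bar\varphi}$ must be read outside the Hilbert-space framework in which $\mathcal{I}^{*}$ is originally defined. I would resolve this by identifying $\mathcal{I}^{*}$ with the conditional-expectation operator associated to the sub-$\sigma$-algebra $\mathfrak{p}^{-1}(\calB(\varGamma_X))\subset\calB(\varGamma_\frakX^\sharp)$: for $G\in L^1(\pi_{\lambda^\star})$ the function $\mathcal{I}^{*}G$ on $\varGamma_X$ is the $\mucl$-a.s.\ unique lift of $\EE_{\pi_{\lambda^\star}}\bigl[G\mynn\mid\mynn\mathfrak{p}^{-1}(\calB(\varGamma_X))\bigr]$, agreeing with the $L^2$-adjoint on $L^2\cap L^1$. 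This is the main (but only mildly delicate) technical ingredient; once it is in place, the argument reduces to the chain of identities displayed above.
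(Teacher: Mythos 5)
Your argument is correct and is essentially identical to the paper's proof: the same chain $\varphi^*\mucl=\mathfrak{p}^*(\bar\varphi^*\pi_{\lambda^\star})$ via Theorem~\ref{th:mucl} and Lemma~\ref{lm:comm1}, followed by testing against $F$ and reading off the density ${\mathcal{I}}^{*}R_{\pi_{\lambda^{\star}}}^{\bar{\varphi}}$. The one ``delicate point'' you raise is in fact already covered: Proposition~\ref{q-i-Poisson} establishes that the Radon--Nikodym density lies in $L^{2}$ of the Poisson measure, so the Hilbert-space adjoint $\mathcal{I}^{*}$ applies directly (though your conditional-expectation reading of $\mathcal{I}^{*}$ is a correct and harmless alternative).
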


\proof According to Theorem \ref{th:mucl} (see (\ref{eq:mu*})) and
Lemma \ref{lm:comm1},
\begin{equation*}
\begin{aligned}
\varphi ^{*}\mucl&= (\mathfrak{p}^{*} \pi_{\lambda^{\star\!}})\circ
\varphi ^{-1}=
\pi_{\lambda^{\star\!}}\circ(\varphi\circ\mathfrak{p})^{-1}\\
&= \pi_{\lambda^{\star\!}}\circ(\mathfrak{p}\circ\bar\varphi)^{-1}=
(\bar\varphi^{*} \pi_{\lambda^{\star\!}})\circ \mathfrak{p}
^{-1}=\mathfrak{p}^*(\bar\varphi^{*} \pi_{\lambda^{\star\!}}).
\end{aligned}
\end{equation*}
Hence, by the change of variables $\gamma=\mathfrak{p}(\bar\gamma)$,
for any non-negative measurable function $F$ on $\varGamma_X$ we
obtain
\begin{gather*}
\int_{\varGamma_{ X}}F(\gamma )\,(\varphi^{*} \mucl)(\rd \gamma )
=\int_{\varGamma _{X}} F(\gamma)\,\mathfrak{p}^*(\bar\varphi^{*}
\pi_{\lambda^{\star\!}})(\rd\gamma) =\int_{\varGamma
_{\frakX}}{\mathcal{I}}F(\bar{\gamma
})\,(\bar{\varphi}^{*}\,\pi _{\lambda^{\star}})(\rd\bar{\gamma }) \\
 =\int_{\varGamma _{\frakX}}{\mathcal{I}}F(\bar{\gamma })
\,R_{\pi_{\lambda^{\star\!}}}^{\bar{\varphi}}(\bar{\gamma })\;\pi
_{\lambda^{\star}}(\rd\bar{\gamma }) =\int_{\varGamma _{ X}}F(\gamma
)\,( {\mathcal{I}}^{*}
R_{\pi_{\lambda^{\star\!}}}^{\bar{\varphi}})(\gamma)\,\mucl(\rd\gamma),
\end{gather*}
where we have also used formula (\ref{eq:I}) and
Proposition~\ref{Poisson-qi}. Thus, the measure $\varphi^{*}\mucl$
is a.c.\ with respect to the measure $\mucl$, with the
Radon--Nikodym density $R_{\mucl}^{\varphi}={\mathcal{I}}^{*}R_{\pi
_{\lambda^{\star}}}^{\bar{\varphi }}$, and the theorem is proved.
\endproof

\begin{remark}\label{rem:R}\normalfont
We do not know an explicit form of the density $R_{\mucl}^{\varphi}$
(cf.\ Remark~\ref{rem:I}).
\end{remark}

\begin{remark}\normalfont
The Poisson cluster measure $\mucl$ on the configuration space
$\varGamma _{X}$ can be used to construct the canonical unitary
representation $U$ of the diffeomorphism group $\Diff_{0}(X)$ by
operators in $L^{2}(\varGamma_{X},\mucl)$, given by the formula
\begin{equation*}
U_{\varphi}F(\gamma )=\sqrt{R_{\mucl}^{\varphi }(\gamma
)}\,F(\varphi ^{-1}(\gamma )),\qquad F\in L^{2}(\varGamma
_{X},\mucl).
\end{equation*}
Such representations, which can be defined for arbitrary
quasi-invariant measures on $\varGamma _{ X}$, play a significant
role in the representation theory of the diffeomorphism group
$\Diff_{0}( X)$ \cite{Ism,VGG} and quantum field theory
\cite{GGPS,Goldin}. An important question is whether the
representation $U$ is irreducible. According to \cite{VGG}, this is
equivalent to the $\Diff_{0}( X)$-ergodicity of the measure $\mucl$,
which in our case is equivalent to the ergodicity of the measure
$\pi_{\lambda^\star}$ with respect to the group of transformations
$\bar{\varphi}$, where $\varphi \in \Diff_{0}( X)$. The latter is an
open question.
\end{remark}

\subsection{Integration-by-parts formula}\label{sec:4.2}

The main objective of this section is to establish an
integration-by-parts (IBP) formula for the Poisson cluster measure
$\mucl$, in the spirit of the IBP formula for Poisson measures
proved in \cite{AKR1}. To this end, we shall use the projection
operator $\mathfrak{p}$ and the properties of the auxiliary Poisson
measure $\pi_{\lambda^{\star}}$. Since our framework is somewhat
different from that in \cite{AKR1}, we give a proof of the IBP
formula for $\pi_{\lambda^{\star}}$.

First, recall that the classical IBP formula for a Borel measure
$\varpi$ on a Euclidean space $\mathbb{R}^{m}$ (see, e.g.,
\cite[Ch.~5]{Bo}) is expressed by the following identity that should
hold for any vector field $v\in \Vect_{0}(\mathbb{R}^{m})$ and all
functions $f,\myp g\in C_{0}^{\infty}(\mathbb{R}^{m})$:
\begin{equation}\label{ibp0}
\begin{aligned} \int_{\mathbb{R}^{m}}f(y)\mypp\nabla_{\mynn v}\mypp g(y)\,\varpi
(\rd y)={}&-\int_{\mathbb{R}^{m}}g(y)\mypp\nabla_{\mynn v}\myp
f(y)\,\varpi (\rd
y)\\[.2pc]
&-\int_{\mathbb{R}^{m}}f(y)\mypp g(y)\mypp\beta_{\varpi}^{\myp
v}(y)\,\varpi (\rd y),
\end{aligned}
\end{equation}
where $\nabla_{\mynn v}\myp \phi(y)$ is the derivative of $\phi$
along $v$ at point $y\in Y$ and $\beta_{\varpi}^{\myp v}\in
L_{\mathrm{loc}}^{1}(\mathbb{R}^{m},\varpi \mathbb{)}$ is a
measurable function called the \emph{logarithmic derivative }of
$\varpi $ along the vector field~$v$. It is easy to see that
$\beta_{\varpi}^{\myp v}$ can be represented in the form
\begin{equation*}
\beta_{\varpi}^{\myp v}(y)=\beta_{\varpi}(y)\myn\CD v(y)+\Div v(y),
\end{equation*}
where the corresponding mapping
$\beta_{\varpi}:\mathbb{R}^{m}\rightarrow \mathbb{R}^{m}$ is called
\emph{vector logarithmic derivative} of $\varpi $. Suppose that the
measure $\varpi $ is a.c.\ with respect to the Lebesgue measure $\rd
y$, with density $w$ such that $w^{1/2}\in
H_{\mathrm{loc}}^{1,2}(\mathbb{R}^{m})$ ($:=$ the local Sobolev
space of order $1$ in $L^2(\mathbb{R}^{m};\rd y)$, i.e., the space
of functions on $\mathbb{R}^{m}$ whose first-order partial
derivatives are locally square integrable). Then the measure
$\varpi$ satisfies the IBP formula (\ref{ibp0}) with the vector
logarithmic derivative $\beta _{\varpi}(y)= w(y)^{-1}\myp\nabla
w(y)$ (note that $w(y)\ne 0$ for $\varpi $-a.a.\ $y\in
\mathbb{R}^{m}$).

Assume that the density
$s(\bar{x})=\lambda^{\star}(\rd\bar{x})/\rd\bar{x}$
\,($\bar{x}\in\frakX$) satisfies the condition $s^{1/2}\in
H_{\mathrm{loc}}^{1,2}(\frakX)$ ($:=$ the local Sobolev space of
order $1$ in $L^2(\frakX; \rd\bar{x})$). By formula (\ref{density})
and decompositions (\ref{eq:star}) and (\ref{eq:s-sn}), the latter
condition is equivalent to the set of analogous conditions for the
restrictions of $s(\bar{x})$ to the spaces $X^n$. That is, assuming
without loss of generality that $p_n\ne 0$, for each
$s_n(\bar{x})=\lambda_n^{\star}(\rd\bar{x})/\rd\bar{x}$ ($\bar{x}\in
X^n$) we have $s_{n}^{1/2}\in H_{\mathrm{loc}}^{1,2}( X^{n})$. By
the general result alluded to above, this ensures that the IBP
formula holds for each measure $\lambda^\star_{n}$\myp, with the
vector logarithmic derivative $\beta_{\lambda^\star_{n}}(\bar{x})=
(\beta_{1}(\bar{x}),\dots, \beta_{n}(\bar{x}))$ \,($\bar{x}\in
X^n$), where
\begin{equation}\label{5.8}
\beta_{i}(\bar{x}):=\frac{\nabla_{\myn
i\,}s_{n}(\bar{x})}{s_{n}(\bar{x})} =\frac{\int_{ X}\nabla _{\myn
i\,}h_{n}(x_{1}-x,\dots ,x_{n}-x)\,\lambda(\rd x)}{\int_{
X}h_{n}(x_{1}-x,\dots,x_{n}-x)\, \lambda(\rd x)}
\end{equation}
if $s_{n}(\bar{x})\neq 0$ and $\beta _{i}(\bar{x}):=0$ if
$s_{n}(\bar{x})=0$.

For any $v\in \Vect_{0}( X)$, let us define the vector field
$\bar{v}$ on $\frakX$ by setting
\begin{equation}\label{eq:v-bar}
\bar{v}(\bar{x}):=(v(x_{1}),\dots,v(x_n)),\quad
\bar{x}=(x_{1},\dots,x_{n})\in  X^n\quad (n\in\mathbb{Z}_+).
\end{equation}
The logarithmic derivative of the measure $\lambda^\star_{n}$ along
the vector field $\bar{v}$ is given by
\begin{equation} \label{logder-sigma}
\beta_{\lambda^\star_{n}}^{\myp\bar{v}}(\bar{x})=\sum_{x_i\in\bar{x}}\bigl(
\beta _{i}(\bar{x})\myn\CD v(x_{i})+\Div v(x_{i})\bigr),\qquad
\bar{x}\in X^n .
\end{equation}

\begin{proposition}
The measure $\lambda^{\star}$ satisfies the following IBP formula:
\begin{equation}\label{ibp1}
\begin{aligned}
\int_{\frakX}f(\bar{x})\mypp\nabla_{\myn \bar{v}}\mypp
g(\bar{x})\,\lambda^{\star}(\rd\bar{x})
=&-\int_{\frakX}g(\bar{x})\mypp\nabla_{\myn \bar{v}}\myp f(\bar{x})
\,\lambda^{\star}(\rd\bar{x})\\[.2pc]
&-\int_{\frakX}f(\bar{x})\mypp
g(\bar{x})\mypp\beta_{\lambda^{\star}}^{\myp\bar{v}}(\bar{x})\,\lambda^{\star}(\rd
\bar{x}),
\end{aligned}
\end{equation}
where $f,g\in C_{0}^{\infty}(\frakX)$ and
$\beta_{\lambda^{\star}}^{\myp\bar{v}}(\bar{x})=
\beta_{\lambda^\star_{n}}^{\myp\bar{v}} (\bar{x})$ if\/ $\bar{x}\in
 X^{n}$ \textup{(}$n\in\mathbb{Z}_+$\textup{)}.
\end{proposition}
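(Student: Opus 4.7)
The plan is to reduce the formula to the classical Euclidean integration-by-parts formula on each stratum $X^n$, already invoked just before the statement. Since $\frakX=\bigsqcup_{n\in\ZZ_+} X^n$ (we may drop $X^\infty$ by the standing assumption $p_\infty=0$) and, by the decomposition \eqref{eq:star}, $\lambda^\star = \bigoplus_{n\in\ZZ_+} p_n\myp\lambda^\star_n$, every integral against $\lambda^\star$ of a function $F\in L^1(\frakX,\lambda^\star)$ splits as $\sum_n p_n\int_{X^n} F|_{X^n}\,\rd\lambda^\star_n$. The hypothesis $f,g\in C_0^\infty(\frakX)$, combined with Remark~\ref{rm:compact}, implies that there exists $N<\infty$ such that $f|_{X^n}\equiv 0$ and $g|_{X^n}\equiv 0$ for all $n>N$, and the non-vanishing restrictions $f_n:=f|_{X^n}$, $g_n:=g|_{X^n}$ lie in $C_0^\infty(X^n)$. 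In particular, the sum produced by decomposing the three integrals in \eqref{ibp1} is finite.

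The next step is to apply the classical IBP formula \eqref{ibp0} to the measure $\lambda^\star_n$ on $X^n\cong\mathbb{R}^{nd}$, for each $n$ with $p_n\ne 0$. This is legitimate because, as noted just above the proposition, the standing assumption $s^{1/2}\in H_{\mathrm{loc}}^{1,2}(\frakX)$ restricts to $s_n^{1/2}\in H_{\mathrm{loc}}^{1,2}(X^n)$, so that $\lambda^\star_n$ possesses the vector logarithmic derivative $\beta_{\lambda^\star_n}$ given by~\eqref{5.8}. The vector field $\bar v$ defined in~\eqref{eq:v-bar} is $C^\infty$-smooth on $X^n$ (but not compactly supported); nevertheless, since $f_n g_n$ has compact support $K_n\subset X^n$, one can freely replace $\bar v$ by $\chi_n\mypp\bar v$ with $\chi_n\in C_0^\infty(X^n)$ equal to $1$ on a neighbourhood of $K_n$. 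The classical IBP then gives, for each $n\le N$ with $p_n\ne 0$,
\begin{equation*}
\int_{X^n}\!\! f_n\myp\nabla_{\bar v}g_n\,\rd\lambda^\star_n = -\int_{X^n}\!\! g_n\myp\nabla_{\bar v}f_n\,\rd\lambda^\star_n -\int_{X^n}\!\! f_n\myp g_n\mypp\beta_{\lambda^\star_n}^{\myp\bar v}\,\rd\lambda^\star_n,
\end{equation*}
with $\beta_{\lambda^\star_n}^{\myp\bar v}$ as in~\eqref{logder-sigma}.

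Multiplying this identity by $p_n$ and summing over $n\in\ZZ_+$, where terms with $p_n=0$ are trivially zero and only finitely many terms are non-zero, yields \eqref{ibp1} upon recalling the decomposition of $\lambda^\star$ and the component-wise definition $\beta_{\lambda^\star}^{\myp\bar v}(\bar x):=\beta_{\lambda^\star_n}^{\myp\bar v}(\bar x)$ for $\bar x\in X^n$. The only points requiring care are the local integrability of $\beta_{\lambda^\star}^{\myp\bar v}$ against $\lambda^\star$ on the compact support of $fg$ (which follows from local integrability of each $\beta_{\lambda^\star_n}^{\myp\bar v}$ on $X^n$) and the interchange of sum and integral (justified by finiteness of the sum). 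I do not expect a genuine obstacle here: the statement is essentially the classical Euclidean IBP lifted piecewise to the disjoint union, with the logarithmic derivative already computed in~\eqref{5.8}--\eqref{logder-sigma}.
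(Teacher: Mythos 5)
Your proposal is correct and follows essentially the same route as the paper, whose proof is a one-line appeal to the decomposition $\lambda^\star=\bigoplus_n p_n\myp\lambda^\star_n$ and the stratum-wise IBP formula for each $\lambda^\star_n$; you have merely supplied the details (finiteness of the sum via compact support, the Sobolev hypothesis on $s_n^{1/2}$, and the cutoff $\chi_n$ needed because $\bar v$ is not compactly supported on $X^n$). The cutoff observation is a point the paper leaves implicit, but it is a routine localization rather than a different method.
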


\proof The result easily follows from the decomposition
(\ref{eq:star}) of the measure $\lambda^{\star}$ and the IBP formula
for each measure $\lambda^{\star}_n$ such that $p_n\ne 0$
\,($n\in\mathbb{Z}_+$).
\endproof

\begin{remark}\normalfont
Formula (\ref{ibp1}) can be rewritten in the form
\begin{equation*}
\begin{aligned}
\int_{\frakX}f(\bar{x})\sum_{x\in \mathfrak{p}(\bar{x})}
\bigl(\nabla_{\mynn x}\mypp g(\bar{x})\myn\CD
v(x)\bigr)\,\lambda^{\star}(\rd\bar{x})=&-\int_{\frakX}g(\bar{x})\sum_{x\in
\mathfrak{p}(\bar{x})}\bigl(\nabla_{\mynn x}\myp f(\bar{x})\myn\CD
v(x)\bigr)\,\lambda^{\star}(\rd\bar{x})\\ &-\int_{\frakX}
f(\bar{x})\mypp
g(\bar{x})\mypp\beta_{\lambda^{\star}}^{\myp\bar{v}}(\bar{x})
\,\lambda^{\star}(\rd\bar{x}).
\end{aligned}
\end{equation*}
\end{remark}

Recall that the functional classes $\mathcal{FC}(\varGamma_{X})$,
$\mathcal{FC}(\varGamma_{{\mathfrak{X}}})$, and
$\mathcal{FC}_{\lambda ^{\star}}(\varGamma_{{\mathfrak{X}}})$ of
local functions on the configuration spaces $\varGamma_{X}$ and
$\varGamma_{{\mathfrak{X}}}$ are defined in Section~\ref{app2}.

\begin{theorem}\label{IBP-}
For each\/ $v\in \Vect_{0}( X)$ and any\/ $F,G\in
\mathcal{FC}(\varGamma _{ X})$, the following IBP formula holds:
\begin{equation}\label{IBP0-}
\begin{aligned}
\int_{\varGamma_{ X}}F(\gamma )\mypp\nabla_{\myn v}^{\varGamma
}G(\gamma)\,\mucl(\rd\gamma )=&-\int_{\varGamma_{ X}}G(\gamma)
\mypp\nabla_{\myn v}^{\varGamma}F(\gamma)\,\mucl(\rd\gamma )\\
&-\int_{\varGamma _{ X}}F(\gamma)\mypp G(\gamma)\myp B_{\mucl}^{\myp
v}(\gamma)\,\mucl(\rd\gamma ),
\end{aligned}
\end{equation}
where $\nabla_{\myn v}^{\varGamma }$ is the $\varGamma$-gradient
along the vector field $v$ defined by \textup{(\ref{eq:grad-new})},
$B_{\mucl}^{\myp v}(\gamma ):={\mathcal{I}}^{*}\langle
\beta_{\lambda^{\star}}^{\myp\bar{v}},\bar{\gamma }\rangle$, and
$\beta_{\lambda^{\star}}^{\myp\bar{v}}$ is the logarithmic
derivative of\/ $\lambda^{\star}$ along the corresponding vector
field\/ $\bar{v}$ \textup{(}see \textup{(\ref{eq:v-bar})}\textup{)}.
\end{theorem}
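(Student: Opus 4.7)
The strategy is to reduce the IBP formula for $\mucl$ on $\varGamma_X$ to a corresponding IBP formula for the Poisson measure $\pi_{\lambda^\star}$ on the enlarged configuration space $\varGamma_\frakX$, where the Poisson calculus of \cite{AKR1} can be deployed. Since by Theorem \ref{th:mucl} we have $\mucl = \mathfrak{p}^*\pi_{\lambda^\star}$, any $\mucl$-integral of $F\in\mathcal{FC}(\varGamma_X)$ rewrites as a $\pi_{\lambda^\star}$-integral of $\mathcal{I}F(\bar\gamma)=F(\mathfrak{p}(\bar\gamma))$, so the task is to show that the three $\mucl$-integrals in (\ref{IBP0-}) lift to three $\pi_{\lambda^\star}$-integrals related by the desired identity, and then to invert via the adjoint $\mathcal{I}^{*}$.

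Two preparatory observations drive the reduction. First, if $F(\gamma)=f(\langle\phi_1,\gamma\rangle,\dots,\langle\phi_k,\gamma\rangle)\in\mathcal{FC}(\varGamma_X)$, then
\begin{equation*}
\mathcal{I}F(\bar\gamma)=f(\langle\tilde\phi_1,\bar\gamma\rangle,\dots,\langle\tilde\phi_k,\bar\gamma\rangle),\qquad \tilde\phi_j(\bar x):=\sum_{x_i\in\bar x}\phi_j(x_i),
\end{equation*}
and since $\supp\tilde\phi_j\subseteq\frakX_{\supp\phi_j}$ has finite $\lambda^\star$-measure by Proposition \ref{prop1}, we have $\mathcal{I}F\in\mathcal{FC}_{\lambda^\star}(\varGamma_\frakX)$. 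Second, the diagonal vector field $\bar v$ on $\frakX$ defined in (\ref{eq:v-bar}) satisfies the intertwining identity
\begin{equation*}
\nabla_{\bar v}^{\varGamma}(\mathcal{I}F)(\bar\gamma)=\mathcal{I}(\nabla_{v}^{\varGamma}F)(\bar\gamma),
\end{equation*}
which is a short direct computation: for each $\bar x=(x_1,\dots,x_n)\in\bar\gamma$ the contribution is $\sum_i \nabla_{\!x_i}F(\mathfrak{p}(\bar\gamma))\CD v(x_i)$, and summing over $\bar x\in\bar\gamma$ recovers $\sum_{x\in\mathfrak{p}(\bar\gamma)}\nabla_{\!x}F(\gamma)\CD v(x)$.

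The next step is to establish the Poisson IBP formula
\begin{equation*}
\int\varPhi\,\nabla_{\bar v}^{\varGamma}\varPsi\,\rd\pi_{\lambda^\star}=-\int\varPsi\,\nabla_{\bar v}^{\varGamma}\varPhi\,\rd\pi_{\lambda^\star}-\int\varPhi\varPsi\,\langle\beta_{\lambda^\star}^{\bar v},\bar\gamma\rangle\,\rd\pi_{\lambda^\star}
\end{equation*}
for $\varPhi,\varPsi\in\mathcal{FC}_{\lambda^\star}(\varGamma_\frakX)$. Following the scheme of \cite{AKR1}, one chooses a set $\varLambda\in\calB(\frakX)$ containing $\supp\bar v=\frakX_K$ and the supports of all the lifted $\tilde\phi_j$ with $\lambda^\star(\varLambda)<\infty$, decomposes $\pi_{\lambda^\star}$ via (\ref{3.1}) as a series of finite-dimensional integrals on $\varLambda^n$, applies the Leibniz rule, and invokes the classical IBP formula (\ref{ibp1}) for $\lambda^\star$ on each factor. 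Applying the resulting Poisson IBP with $\varPhi=\mathcal{I}F$, $\varPsi=\mathcal{I}G$, using the intertwining identity, and reverting via $\mathcal{I}^{*}$ then delivers (\ref{IBP0-}) with $B_{\mucl}^{\myp v}=\mathcal{I}^{*}\langle\beta_{\lambda^\star}^{\bar v},\bar\gamma\rangle$.

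The principal obstacle is precisely the Poisson IBP step on $\varGamma_\frakX$: although $\bar v$ fails to be compactly supported in $\frakX$ (cf.\ Remark \ref{supp}) and the test functions $\tilde\phi_j$ lie in the wider class $\mathcal{FC}_{\lambda^\star}$ rather than in $\mathcal{FC}$, the support $\frakX_K$ has finite $\lambda^\star$-measure by Proposition \ref{prop1}. This finiteness is exactly what keeps the relevant integrals absolutely convergent, lets the decomposition (\ref{3.1}) reduce the identity to a statement about integrals over finite products $\varLambda^n$, and makes the \cite{AKR1} machinery carry over with only notational changes; the explicit form of $\beta_{\lambda^\star}^{\bar v}$ in (\ref{logder-sigma}) then emerges directly from (\ref{5.8}) and the chain rule.
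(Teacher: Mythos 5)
Your proposal is correct and follows essentially the same route as the paper: lift everything to $\varGamma_{\frakX}$ via $\mathcal{I}$ using Theorem \ref{th:mucl}, note that $\mathcal{I}F\in\mathcal{FC}_{\lambda^\star}(\varGamma_\frakX)$ and that the intertwining $\nabla_{\bar v}^{\varGamma}(\mathcal{I}F)=\mathcal{I}(\nabla_v^{\varGamma}F)$ holds (this is formula (\ref{eq:IPhi}) in the paper), expand via (\ref{3.1}) over $(\frakX_K)^m$ with $\lambda^\star(\frakX_K)<\infty$ guaranteed by Proposition \ref{prop1}, apply the finite-dimensional IBP (\ref{ibp1}) for $\lambda^\star$ factorwise, and reassemble through $\mathcal{I}^*$. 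The only cosmetic difference is that you state the Poisson IBP on $\varGamma_\frakX$ as a separate intermediate lemma, whereas the paper carries out the same computation inline.
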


\proof  Denote
$$
Q(\gamma):=F(\gamma) \mypp\nabla_{\myn v}^{\varGamma }G(\gamma)=
F(\gamma )\sum_{x\in \gamma }\nabla _{\!x}G(\gamma )\myn\CD v(x),
$$
then
\begin{equation}\label{eq:IPhi}
({\mathcal{I}}\mypp Q) (\bar{\gamma })=({\mathcal{I}}\myp
F)(\bar{\gamma })\sum_{x\in \mathfrak{p}(\bar{\gamma })} \nabla
_{\mynn x}\myp G(\mathfrak{p}(\bar{\gamma}))\myn\CD v(x).
\end{equation}
Note that ${\mathcal{I}}\myp Q \in
\mathcal{FC}_{\lambda^{\star}}(\varGamma _{\mathfrak{X}})$, so we
can use (\ref{3.1}) in order to integrate $\mathcal{I}\myp Q$ with
respect to $\pi_{\lambda^{\star}}$. Using Theorem \ref{th:mucl}
(see~(\ref{eq:mu*})) and formula (\ref{eq:IPhi}), we obtain
\begin{align}
\notag &\int_{\varGamma _{X}}F(\gamma)\mypp\nabla_{\myn
v}^{\varGamma} G(\gamma)\,\mucl(\rd\gamma)
=\int_{\varGamma_{\frakX}} ({\mathcal{I}}\myp F)(\bar{\gamma
})\sum_{x\in\mathfrak{p}(\bar{\gamma})} \nabla_{\mynn x}
G(\mathfrak{p}(\bar{\gamma}))\myn\CD v(x)\,
\pi _{\lambda^{\star}}(\rd\bar{\gamma }) \\
\notag
&\hspace{1.2pc}=\re^{-\lambda^{\star}(\frakX_{K})}\sum_{m=0}^{\infty}
\frac{1}{m!}\int_{(\frakX_{K})^{m}}
F(\{\mathfrak{p}(\bar{x}_{1}),\dots,\mathfrak{p}(\bar{x}_{m})\})\\
\notag &\hspace{3.0pc} \times
\sum_{i=1}^{m}\sum_{x\in\mathfrak{p}(\bar{x}_{i})} \nabla_{\mynn
x}G(\{\mathfrak{p}(\bar{x}_{1}),\dots,
\mathfrak{p}(\bar{x}_{m})\})\myn\CD v(x) \,\bigotimes_{i=1}^m
\lambda^{\star} (\rd\bar{x}_{i})\\
\label{eq:r1} &\hspace{1pc}\begin{aligned}[h]
{}&=\re^{-\lambda^{\star}(\frakX_{K})}\sum_{m=0}^\infty \frac{1}{m!}
\sum_{i=1}^{m}\int_{(\frakX_K)^{m-1}}\biggl(\int_{\frakX_{K}}
F(\{\mathfrak{p}(\bar{x}_{1}),\dots,\mathfrak{p}(\bar{x}_{m})\})\\
&\hspace{1.8pc} \times
\sum_{x\in\mathfrak{p}(\bar{x}_{i})}\nabla_{\mynn
x}G(\{\mathfrak{p}(\bar{x}
_{1}),\dots,\mathfrak{p}(\bar{x}_{m})\})\myn\CD
v(x)\,\lambda^{\star} (\rd\bar{x}_{i})\biggr)\bigotimes_{j\ne i}
\lambda^{\star}(\rd\bar{x}_{j}).
\end{aligned}
\end{align}
By the IBP formula for $\lambda^{\star}$, the inner integral in
(\ref{eq:r1}) can be rewritten as
\begin{align*}
&-\int_{\frakX_{K}}G(\{\mathfrak{p}(\bar{x}_{1}),\dots,\mathfrak{p}(\bar{x}_{m})\})
\,\Biggl(\,\sum_{x\in \mathfrak{p}(\bar{x}_{i})}\nabla_{\myn x}
F(\{\mathfrak{p}(\bar{x}_{1}),\dots,\mathfrak{p}(\bar{x}_{m})\})\myn\CD
v(x)\\
&\hspace{14.5pc}+F(\{\mathfrak{p}(\bar{x}_{1}),\dots,\mathfrak{p}(\bar{x}_{m})\})\,\beta
_{\lambda^{\star}}^{\myp\bar{v}}(\bar{x}_{i})\Biggr)\,\lambda^{\star}(\rd\bar{x}_{i}).
\end{align*}
Hence, the right-hand side of (\ref{eq:r1}) is reduced to
\begin{align*}
&-\re^{-\lambda^{\star}(\frakX_{K})}\sum_{m=0}^{\infty}
\frac{1}{m!}\int_{(\frakX_{K})^{m}}G(\{\mathfrak{p}(\bar{x}_{1}),\dots,\mathfrak{p}(\bar{x}_{m})\})\\
&\hspace{1.8pc}\times\Biggl(\sum_{\,x\in
\mathfrak{p}(\{\bar{x}_{1},\dots,\bar{x}_m\})}\nabla_{\myn x}
F(\{\mathfrak{p}(\bar{x}_{1}),\dots,\mathfrak{p}(\bar{x}_{m})\})\myn\CD v(x) \\
&\hspace{6.5pc}+F(\{\mathfrak{p}(\bar{x}_{1}),\dots,\mathfrak{p}(\bar{x}_{m})\})\,B_{\pi
_{\lambda^{\star}}}^{\myp
\bar{v}}(\{\bar{x}_{1},\dots,\bar{x}_{m}\})\Biggr)\bigotimes_{i=1}^m
\lambda^{\star}(\rd\bar{x}_{i})\\
&=-\int_{\varGamma _{\frakX}}G(\mathfrak{p}(\bar{\gamma
}))\left(\sum_{x\in \mathfrak{p}(\bar{\gamma })}\!\nabla_{\!x}
F(\mathfrak{p}(\bar{\gamma }))\myn\CD
v(x)+F(\mathfrak{p}(\bar{\gamma }))\mypp
B_{\pi_{\lambda^{\star}}}^{\myp \bar{v}}(\bar{\gamma })\right)\pi
_{\lambda^{\star}}(\rd\bar{\gamma }) \\
&=-\int_{\varGamma _{X}}G(\gamma )\mypp\nabla_{\myn
v}^{\varGamma}\mynn F(\gamma )\,\mucl(\rd\gamma )-\int_{\varGamma _{
X}}F(\gamma )\myp G(\gamma )\myp B_{\mucl}^{\myp v}(\gamma
)\,\mucl(\rd\gamma ),
\end{align*}
where
\begin{equation}\label{logder-Poisson}
B_{\pi _{\lambda^{\star}}}^{\myp\bar{v}}(\bar{\gamma
}):=\sum_{\bar{x}\in \bar{\gamma
}}\;\beta_{\lambda^{\star}}^{\myp\bar{v}}(\bar{x})=\langle
\beta_{\lambda^{\star}}^{\myp\bar{v}},\bar{\gamma }\rangle,\qquad
\bar{\gamma}\in\varGamma_\frakX,
\end{equation}
and $B_{\mucl}^{v}:={\mathcal{I}}^{*}B_{\pi
_{\lambda^{\star}}}^{\myp\bar{v}}$. Note that
$B_{\pi_{\lambda^{\star}}}^{\myp\bar{v}}$ is well defined since
$\lambda^{\star}(\supp\bar{v})<\infty $, so there are only finitely
many non-zero terms in the sum (\ref{logder-Poisson}). Moreover,
finiteness of the first and second moments of $\pi
_{\lambda^{\star}}$ implies that
$B_{\pi_{\lambda^{\star}}}^{\myp\bar{v}}\!\in L^{2}(\varGamma
_{\frakX},\pi _{\lambda^{\star}})$.
\endproof

\begin{remark}\normalfont
The logarithmic derivative $B_{\pi_{\lambda^{\star}}}^{\myp
\bar{v}}$ can be written in the form (cf.~(\ref{5.8}))
\begin{align*} B_{\pi _{\lambda^{\star}}}^{\myp\bar{v}}(\bar{\gamma})
&=\sum_{\bar{x}\in \bar{\gamma }}\sum_{x_i\in\bar{x}}\bigl(\beta_{i}
(\bar{x})\myn\CD v(x_{i})+\Div v(x_{i})\bigr)\\
&=\sum_{\bar{x}\in \bar{\gamma }}\bigl(\beta _{\lambda^{\star}}
(\bar{x})\myn\CD \bar{v}(\bar{x})+\Div \bar{v}(\bar{x})\bigr),
\qquad \bar{\gamma}\in\varGamma_\frakX.
\end{align*}
\end{remark}

Formula (\ref{IBP0-}) can be extended to more general vector fields
on $\varGamma _{ X}$. For any vector field $V\in
\mathcal{FV}(\varGamma_{X})$ of the form (\ref{vf}), we set
\begin{equation*}
B_{\mucl}^{V}(\gamma ):=\sum_{i=1}^{k}\biggl(A_{i}(\gamma )B_{\mu
}^{v_{i}}(\gamma )+\sum_{x\in \gamma }\nabla _{\mynn x}A_{i}(\gamma
)\myn\CD v_{i}(x)\biggr), \qquad \gamma\in\varGamma_X.
\end{equation*}

\begin{theorem}\label{IBP1}
For any\/ $V\in \mathcal{FV}(\varGamma_{X})$ and all $F,G\in
\mathcal{FC}(\varGamma_{X})$, we have
\begin{equation}\label{IBP2}
\begin{aligned}
\int_{\varGamma _{ X}}F(\gamma)\mypp \nabla_{\myn V}^{\varGamma
}\mypp G(\gamma )\,\mucl(\rd\gamma ) =&-\int_{\varGamma _{
X}}G(\gamma )\myp\nabla_{\myn V}^{\varGamma }\myp F(\gamma
)\,\mucl(\rd\gamma
)\\
&-\int_{\varGamma _{ X}}F(\gamma )\mypp G(\gamma )\mypp
B_{\mucl}^{\myp V}(\gamma) \,\mucl(\rd\gamma ).
\end{aligned}
\end{equation}
\end{theorem}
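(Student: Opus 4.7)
The plan is to reduce Theorem~\ref{IBP1} to Theorem~\ref{IBP-} by exploiting the multiplicative structure of vector fields in $\mathcal{FV}(\varGamma_X)$ together with the Leibniz rule for the $\varGamma$-gradient. By linearity of both sides of (\ref{IBP2}) in $V$, it suffices to handle a single elementary term of the form $V(\gamma)_x = A(\gamma)\mypp v(x)$ with $A\in\mathcal{FC}(\varGamma_X)$ and $v\in\Vect_0(X)$; the general statement then follows by summing over $i=1,\dots,k$.

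For such an elementary $V$, the definition (\ref{eq:grad-new}) gives
\[
\nabla_{\myn V}^{\varGamma}\mynn G(\gamma) = \sum_{x\in\gamma}\nabla_{\mynn x}G(\gamma)\myn\CD V(\gamma)_x = A(\gamma)\mypp\nabla_{\myn v}^{\varGamma}G(\gamma),
\]
so $F\mypp\nabla_{\myn V}^{\varGamma}G = (FA)\mypp\nabla_{\myn v}^{\varGamma}G$. Since $FA\in\mathcal{FC}(\varGamma_X)$, Theorem~\ref{IBP-} applied to the pair $FA,\myp G$ yields
\begin{align*}
\int_{\varGamma_X}F\mypp\nabla_{\myn V}^{\varGamma}G\,\mucl(\rd\gamma) ={}& -\int_{\varGamma_X}G\mypp\nabla_{\myn v}^{\varGamma}(FA)\,\mucl(\rd\gamma)\\
& -\int_{\varGamma_X}F\mypp A\mypp G\mypp B_{\mucl}^{\myp v}\,\mucl(\rd\gamma).
\end{align*}
Next I would apply the Leibniz rule $\nabla_{\myn v}^{\varGamma}(FA) = A\mypp\nabla_{\myn v}^{\varGamma}F + F\mypp\nabla_{\myn v}^{\varGamma}A$, which is immediate from the definition (\ref{eq:grad-new}). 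The term $A\mypp\nabla_{\myn v}^{\varGamma}F$ coincides with $\nabla_{\myn V}^{\varGamma}F$, while the remaining two terms combine as
\[
-\int_{\varGamma_X}F\mypp G\bigl(A\mypp B_{\mucl}^{\myp v}+\nabla_{\myn v}^{\varGamma}A\bigr)\,\mucl(\rd\gamma) = -\int_{\varGamma_X}F\mypp G\mypp B_{\mucl}^{\myp V}\,\mucl(\rd\gamma),
\]
where the last equality is exactly the defining formula for $B_{\mucl}^{\myp V}$ specialised to $V=A\myp v$. This proves (\ref{IBP2}) in the elementary case and hence, by linearity, in general.

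The only point requiring some care is the integrability of the terms produced along the way, which is where I expect the bulk of the bookkeeping (rather than any genuine difficulty) to lie: one must verify that the products $F\mypp G\mypp B_{\mucl}^{\myp V}$ and $F\mypp G\mypp A\mypp B_{\mucl}^{\myp v}$ belong to $L^1(\varGamma_X,\mucl)$ so as to legitimise the intermediate manipulations. This follows from the locality and boundedness of $F$, $G$, $A$ and $\nabla A$ (all are in $\mathcal{FC}(\varGamma_X)$ hence bounded with compact essential support on $X$), together with the square-integrability $B_{\mucl}^{\myp v}=\mathcal{I}^{*}\langle\beta_{\lambda^{\star}}^{\myp\bar v},\bar\gamma\rangle \in L^{2}(\varGamma_X,\mucl)$ established at the end of the proof of Theorem~\ref{IBP-}. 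Beyond these standard estimates, no new analytical input is needed.
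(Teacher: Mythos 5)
Your proposal is correct and follows essentially the same route as the paper, whose proof of Theorem~\ref{IBP1} is a one-line remark that the result ``readily follows from Theorem~\ref{IBP-} and linearity''; you have simply written out in full the reduction (elementary term $V=A\myp v$, Leibniz rule for $\nabla^{\varGamma}_{v}$, application of Theorem~\ref{IBP-} to the pair $FA,G$, and recombination into $B_{\mucl}^{\myp V}$) that the authors leave implicit. The integrability remarks at the end are consistent with what is established at the end of the paper's proof of Theorem~\ref{IBP-}.
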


\proof The result readily follows from Theorem \ref{IBP-} and
linearity of the right-hand side of (\ref{logder-sigma}) with
respect to $v$.
\endproof

\begin{remark}\label{rem:B}
\normalfont An explicit form of $B_{\mu _{{\mathrm{cl}}}}^{V}$ is
not known (cf.\ Remarks \ref{rem:I} and \ref{rem:R}).
\end{remark}

\begin{remark}\normalfont
The logarithmic derivative $B_{\mucl}^{V}$ can be represented in the
form $B_{\mucl}^{V}={\mathcal{I}}^{*}
B_{\pi_{\lambda^{\star}}}^{{\mathcal{I}}V}$, where $B_{\pi
_{\lambda^{\star}}}^{{\mathcal{I}}V}$ is the logarithmic derivative
of $\pi _{\lambda^{\star}}$ along the vector field
${\mathcal{I}}\myp V(\bar{\gamma}) :=V(\mathfrak{p}(\bar{\gamma
}))$. Note that the equality
\begin{align*}
T_{\bar{\gamma }}\varGamma _{\frakX}& =\bigoplus_{\bar{x}\in
\bar{\gamma }}T_{\bar{x}}\frakX=\bigoplus_{\bar{x}\in \bar{\gamma }}
\bigoplus_{x_i\in \bar{x}}T_{x_i} X =\bigoplus_{x\in
\mathfrak{p}(\bar{\gamma })}T_{x} X=T_{\mathfrak{p}(\bar{\gamma
})}\varGamma _{ X}
\end{align*}
implies that $V(\mathfrak{p}(\bar{\gamma}))\in T_{\bar{\gamma
}}\varGamma_{\frakX}$, and thus ${\mathcal{I}}\myp V(\bar{\gamma })$
is a vector field on $\varGamma _{\mathfrak{X}}$.
\end{remark}

\section{Dirichlet forms and equilibrium stochastic
dynamics}\label{sec:Dirichlet}

In this section, we construct a Dirichlet form $\mathcal{E}_{\mucl}$
associated with the Poisson cluster measure $\mucl$ and prove the
existence of the corresponding equilibrium stochastic dynamics on
the configuration space. We also show that the Dirichlet form
$\mathcal{E}_{\mucl}$ is irreducible. We assume throughout that the
measure $\lambda^\star$ satisfies all the conditions set out at the
beginning of Section \ref{sec:QI-IBP} and in Section~\ref{sec:4.2}.

\subsection{The Dirichlet form associated with $\mucl$}\label{sec:Dir-mu}

Let us introduce the pre-Dirichlet form $\mathcal{E}_{\mucl}$
associated with the Poisson cluster measure $\mucl$, defined on
$\mathcal{FC}(\varGamma _{ X})\subset L^{2}(\varGamma_{X},\mucl)$ by
\begin{equation}\label{eq:E-mu}
\mathcal{E}_{\mucl}(F,G):=\int_{\varGamma _{ X}}\langle
\nabla^{\varGamma\myn}F(\gamma ),\nabla^{\varGamma\myn} G (\gamma
)\rangle _{\gamma }\:\mucl(\rd\gamma ),\qquad
F,G\in\mathcal{FC}(\varGamma _{X}),
\end{equation}
where $\nabla^{\varGamma }$ is the $\varGamma$-gradient on the
configuration space $\varGamma _{X}$ (see~(\ref{eq:G-gradient})).
The next proposition shows that the form $\mathcal{E}_{\mucl}$ is
well defined.

\begin{proposition}
For any $F,G\in \mathcal{FC}(\varGamma _{ X})$, we have
$\mathcal{E}_{\mucl}(F,G)<\infty $.
\end{proposition}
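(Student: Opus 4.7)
My plan is to reduce, via Cauchy--Schwarz and the local structure of $F,G\in\mathcal{FC}(\varGamma_X)$, to showing that the first-moment measure of $\mucl$ is Radon, and then to compute this via the projection identity $\mucl=\mathfrak{p}^*\pi_{\lambda^\star}$ (Theorem~\ref{th:mucl}) together with Campbell's first-moment formula for $\pi_{\lambda^\star}$.

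First, pointwise Cauchy--Schwarz in $T_\gamma\varGamma_X$ followed by Cauchy--Schwarz in $L^2(\varGamma_X,\mucl)$ yields $\mathcal{E}_{\mucl}(F,G)^2\le\mathcal{E}_{\mucl}(F,F)\mypp\mathcal{E}_{\mucl}(G,G)$, so it suffices to bound $\mathcal{E}_{\mucl}(F,F)$ for $F\in\mathcal{FC}(\varGamma_X)$. With $F(\gamma)=f(\langle\phi_1,\gamma\rangle,\dots,\langle\phi_k,\gamma\rangle)$, the identity $\nabla_{\mynn x}F(\gamma)=\sum_{i=1}^k(\partial_if)(\langle\phi_1,\gamma\rangle,\dots,\langle\phi_k,\gamma\rangle)\mypp\nabla\phi_i(x)$, combined with boundedness of $\partial_if$ (uniformly in $\gamma$) and the compact support $K:=\bigcup_i\supp\phi_i$ of $\nabla\phi_i$, yields the uniform pointwise bound $|\nabla_{\mynn x}F(\gamma)|^2\le h(x):=M^2 k\sum_i|\nabla\phi_i(x)|^2$, where $M=\max_i\|\partial_if\|_\infty$. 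Here $h\ge 0$ is bounded, continuous, and supported in $K$, whence
$$\mathcal{E}_{\mucl}(F,F)\le\int_{\varGamma_X}\langle h,\gamma\rangle\,\mucl(\rd\gamma).$$

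Applying Theorem~\ref{th:mucl} together with Campbell's first-moment identity for $\pi_{\lambda^\star}$,
$$\int_{\varGamma_X}\langle h,\gamma\rangle\,\mucl(\rd\gamma)=\int_{\varGamma_\frakX}\sum_{\bar x\in\bar\gamma}\tilde h(\bar x)\,\pi_{\lambda^\star}(\rd\bar\gamma)=\int_\frakX\tilde h(\bar x)\,\lambda^\star(\rd\bar x),$$
with $\tilde h(\bar x):=\sum_{x_i\in\bar x}h(x_i)\in\mathrm{M}_+(\frakX)$. Unpacking $\lambda^\star$ via its definition \eqref{eq:sigma*}, using $\mu_0=\mathfrak{p}^*\eta$, and applying Fubini converts the right-hand side into
$$\int_{\varGamma^\sharp_X}\sum_{y\in\gamma_0^{\myp\prime}}\Bigl(\int_X h(y+x)\,\lambda(\rd x)\Bigr)\,\mu_0(\rd\gamma_0^{\myp\prime}),$$
which, since $h\le\|h\|_\infty\mathbf{1}_K$, is dominated by $\|h\|_\infty$ times the $\mu_0$-expectation of $\sum_{y\in\gamma_0^{\myp\prime}}\lambda(K-y)$.

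This last finiteness claim is the main obstacle: condition (a-ii) of Theorem~\ref{th:properClusterPoisson} controls only the $\lambda$-measure of the \emph{union} $D_K(\gamma_0^{\myp\prime})=\bigcup_{y}(K-y)$, whereas the sum just produced involves multiplicities and is a priori larger. Under the standing hypothesis $p_\infty=0$ together with a natural moment condition subsumed in the working setup — for instance assumption (a-ii$'$) of Proposition~\ref{pr:a1}, namely $\sup_x\lambda(K+x)<\infty$ together with $\sum_n n\myp p_n<\infty$, or (a-ii$''$) with the same first moment on $\nu_0$ — one has $\sum_{y\in\gamma_0^{\myp\prime}}\lambda(K-y)\le C_K\nu_0$, whose $\mu_0$-expectation is finite. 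This closes the estimate and yields $\mathcal{E}_{\mucl}(F,F)<\infty$; by the initial Cauchy--Schwarz reduction, $\mathcal{E}_{\mucl}(F,G)<\infty$ as well.
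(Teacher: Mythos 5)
Your argument follows essentially the same route as the paper's: both reduce $\mathcal{E}_{\mucl}(F,G)$ to the finiteness of $\int_{\varGamma_X}\langle q,\gamma\rangle\,\mucl(\rd\gamma)$ for a bounded, compactly supported $q\ge0$ (the paper writes $\langle\nabla^{\varGamma\myn}F,\nabla^{\varGamma\myn}G\rangle_\gamma=\sum_{i,j}Q_{ij}(\gamma)\myp\langle q_{ij},\gamma\rangle$ with $Q_{ij}$ bounded and $q_{ij}=\nabla\phi_i\CD\nabla\psi_j\in C_0(X)$ instead of your Cauchy--Schwarz reduction, but that is cosmetic), and both then pass to $\int_\frakX\tilde q\,\rd\lambda^{\star}$ via Theorem~\ref{th:mucl} and the first-moment identity for the Poisson measure $\pi_{\lambda^{\star}}$. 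The difference lies in the final step, and there you have put your finger on a genuine soft spot. The paper concludes $\int_{\frakX}\tilde q(\bar y)\,\lambda^{\star}(\rd\bar y)<\infty$ ``because $\lambda^{\star}(\frakX_{\supp q})<\infty$'' (Proposition~\ref{prop1}); but $\tilde q(\bar y)=\sum_{y_i\in\bar y}q(y_i)$ is of order $n\myp\|q\|_\infty$ on $X^n$ and hence unbounded on $\frakX$ whenever infinitely many $p_n$ are positive, so finiteness of $\lambda^{\star}(\supp\tilde q)$ does not by itself yield integrability. As you observe, $\int_\frakX\tilde q\,\rd\lambda^{\star}$ is comparable to the $\mu_0$-expectation of $\sum_{y\in\gamma_0^{\myp\prime}}\lambda(K-y)$, a sum counted with multiplicities, whereas condition (a-ii) controls only the measure of the union $D_K(\gamma_0^{\myp\prime})$; the two can differ by a factor of order $\nu_0$. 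Your fix --- importing (a-ii$'$), i.e.\ $\sup_x\lambda(K+x)<\infty$ together with $\sum_n n\myp p_n<\infty$ --- does close the estimate, but be aware that this is an \emph{additional} hypothesis: in the paper it appears only as a sufficient condition for (a-ii) (Proposition~\ref{pr:a1}) and is not among the standing assumptions of Section~\ref{sec:QI-IBP}. So your proof is correct under a mildly strengthened hypothesis, and in the process you have exposed a step that the paper's own proof passes over too quickly: some first-moment control of the cluster process on compact sets is genuinely what this proposition requires.
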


\proof The statement follows from the existence of the first moments
of $\mucl$. Indeed, let $F,G\in \mathcal{FC}(\varGamma _{ X})$ have
representations
\begin{align*}
F(\gamma )=f(\langle \phi_{1},\gamma \rangle ,\dots ,\langle
\phi_{k},\gamma \rangle ),\qquad G(\gamma )=g(\langle
\psi_{1},\gamma \rangle ,\dots ,\langle \psi _{\ell},\gamma \rangle)
\end{align*}
(see~(\ref{local-funct})), then a direct calculation shows that
\begin{equation*}
\langle \nabla^{\varGamma\myn}F(\gamma ),\nabla^{\varGamma}\myn
G(\gamma )\rangle _{\gamma }=\sum_{x\in \gamma }\nabla _{\mynn
x}F(\gamma )\CD \nabla _{\myn x}\myp G(\gamma )=\sum_{i,j}
Q_{ij}(\gamma )\myp\langle q_{ij},\gamma\rangle,
\end{equation*}
where $q_{ij}(x):=\nabla \phi_{i}(x)\myn\CD \nabla \psi_{j}(x)\in
C_{0}( X)$ and
\begin{equation*} Q_{ij}(\gamma
):=\nabla_{\myn i}f(\langle \phi_{1},\gamma\rangle,\dots,\langle
\phi_{k},\gamma\rangle)\,\nabla_{\mynn j}\mypp g(\langle
\psi_{1},\gamma\rangle,\dots,\langle \psi_{\ell},\gamma\rangle)\in
\mathcal{FC}(\varGamma_{X}).
\end{equation*}
Denoting for brevity $q(x):=q_{ij}(x)$ and setting
$\tilde{q}(\bar{x}):=\sum_{x\in \bar{x}} q(x)$, by Theorem
\ref{th:mucl} we have
\begin{align*}
\int_{\varGamma _{ X}} \langle
q,\gamma\rangle\,\mucl(\rd\gamma)&=\int_{\varGamma _{\frakX}}\langle
q, \mathfrak{p}(\bar{\gamma})\rangle\,\pi_{\lambda^{\star}}(\rd\bar{\gamma })\\
&=\int_{\varGamma _{\frakX}}\langle\tilde{q},\bar{\gamma
}\rangle\,\pi _{\lambda^{\star}}(\rd\bar{\gamma
})=\int_{\frakX}\tilde{q}(\bar{y})\,\lambda^{\star}(\rd\bar{y})<\infty,
\end{align*}
because $\lambda^{\star}(\supp\tilde{q}\myp)
=\lambda^{\star}(\frakX_{\supp q})<\infty$ by Proposition
\ref{prop1}. Therefore, $\langle q, \gamma\rangle\in L^{1}(\varGamma
_{ X},\mucl)$ and the required result follows.
\endproof

Let us also consider the pre-Dirichlet form
$\mathcal{E}_{\pi_{\lambda^{\star}}}$ associated with the Poisson
measure $\pi_{\lambda^{\star}}$, defined on the space
$\mathcal{FC}(\varGamma_{\frakX})\subset
L^{2}(\varGamma_{\frakX},\pi_{\lambda^{\star}})$ by
\begin{equation*}
\mathcal{E}_{\pi
_{\lambda^{\star}}}(\varPhi,\varPsi):=\int_{\varGamma_{\frakX}}
\langle\nabla^{\varGamma\myn}\varPhi(\bar{\gamma
}),\nabla^{\varGamma} \varPsi(\bar{\gamma})\rangle_{\bar{\gamma}}
\:\pi_{\lambda^{\star}}(\rd\bar{\gamma}), \qquad
\varPhi,\varPsi\in\mathcal{FC}(\varGamma_{\frakX})
\end{equation*}
(here $\nabla^{\varGamma}$ is the $\varGamma$-gradient on the
configuration space $\varGamma_{\frakX}$,
cf.~(\ref{eq:G-gradient})). Pre-Dirichlet forms of such type
associated with general Poisson measures were introduced and studied
in \cite{AKR1}. Finiteness of the first moments of the Poisson
measure $\pi _{\lambda^{\star}}$ implies that $\mathcal{E}_{\pi
_{\lambda^{\star}}}$ is well defined. It follows from the IBP
formula for $\pi _{\lambda^{\star}}$ that
\begin{equation}\label{gen-poisson}
\mathcal{E}_{\pi
_{\lambda^{\star}}}(\varPhi,\varPsi)=\int_{\varGamma
_{\frakX}}H_{\pi _{\lambda^{\star}}}\varPhi(\bar{\gamma })\,
\varPsi(\bar{\gamma })\,\pi _{\lambda^{\star}}(\rd\bar{\gamma
}),\qquad \varPhi,\varPsi\in \mathcal{FC}(\varGamma _{\frakX}),
\end{equation}
where $H_{\pi _{\lambda^{\star}}}$ is a symmetric non-negative
operator in $L^{2}(\varGamma_{\frakX},\pi_{\lambda^{\star}})$
(called the Dirichlet operator of the Poisson measure $\pi
_{\lambda^{\star}}$, see \cite{AKR1}) defined on the domain
$\mathcal{FC}(\varGamma _{\frakX})$ by
\begin{equation}\label{gen-poisson1}
(H_{\pi_{\lambda^{\star}}}\varPhi)(\bar{\gamma }):=-\sum_{\bar{x}\in
\bar{\gamma }}\bigl(\Delta _{\bar{x}}\mypp\varPhi(\bar{\gamma
})+\nabla_{\myn\bar{x}}\mypp\varPhi(\bar{\gamma})\myn\CD
\beta_{\lambda^{\star}}(\myp\bar{x})\bigr)\qquad (\bar{\gamma}\in
\varGamma_{\frakX}).
\end{equation}
Since function $\varPhi\in \mathcal{FC}(\varGamma _{\frakX})$ is
local  (see Section~\ref{app2}), there are only finitely many
non-zero terms in the sum (\ref{gen-poisson1}).

\begin{remark}\normalfont
Note that the operator $H_{\pi _{\lambda^{\star}}}$ is well defined
by formula (\ref{gen-poisson1}) on the bigger space
$\mathcal{FC}_{\lambda^{\star}}(\varGamma _{\frakX})$. Similar
arguments as before show that the pre-Dirichlet form
$\mathcal{E}_{\pi _{\lambda^{\star}}}(\varPhi,\varPsi)$ is well
defined on $\mathcal{FC}_{\lambda^{\star}}(\varGamma _{\frakX})$ and
formula (\ref{gen-poisson}) holds for any $\varPhi,\varPsi\in
\mathcal{FC}_{\lambda^{\star}}(\varGamma _{\frakX})$.
\end{remark}

Consider a symmetric operator in $L^{2}(\varGamma _{X},\mucl)$
defined on $\mathcal{FC}(\varGamma _{ X})$ by the formula
\begin{equation}\label{eq:H-mu}
H_{\mucl}:={\mathcal{I}}^{*}H_{\pi _{\lambda^{\star}}}{\mathcal{I}}.
\end{equation}
Note that the domain  $\mathcal{FC}(\varGamma _{ X})$ is dense in
$L^{2}(\varGamma _{X},\mucl)$.
\begin{theorem}\label{th:Hcl}
For any $F,G\in \mathcal{FC}(\varGamma _{ X})$, the form
\textup{(\ref{eq:E-mu})} satisfies the equality
\begin{equation}\label{generator}
\mathcal{E}_{\mucl}(F,G)=\int_{\varGamma _{ X}}H_{\mucl}F(\gamma)
\,G(\gamma )\,\mucl(\rd\gamma ).
\end{equation}
In particular, this implies that $H_{\mucl}$ is a non-negative
operator on $\mathcal{FC}(\varGamma _{ X})$.
\end{theorem}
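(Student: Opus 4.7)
The plan is to reduce the identity (\ref{generator}) to the corresponding identity (\ref{gen-poisson}) for the Poisson measure $\pi_{\lambda^{\star}}$ via the isometry $\mathcal{I}$. The first step is a chain-rule type computation: for any $F\in\mathcal{FC}(\varGamma_X)$ and $\bar{\gamma}\in\varGamma_\frakX$, I would show that
\begin{equation*}
\nabla_{\mynn\bar{x}}(\mathcal{I}F)(\bar{\gamma})=\bigl(\nabla_{\mynn x_1}F(\mathfrak{p}(\bar{\gamma})),\dots,\nabla_{\mynn x_n}F(\mathfrak{p}(\bar{\gamma}))\bigr)\in T_{\bar{x}}\mathfrak{X},\qquad \bar{x}=(x_1,\dots,x_n)\in\bar{\gamma},
\end{equation*}
which follows directly from the definition $(\mathcal{I}F)(\bar{\gamma})=F(\mathfrak{p}(\bar{\gamma}))$ and the diagonal structure of $\mathfrak{p}$. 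Summing the component-wise inner products over coordinates of $\bar{x}$ and then over $\bar{x}\in\bar{\gamma}$ reindexes as a sum over all points $x\in\mathfrak{p}(\bar{\gamma})$, yielding the key identity
\begin{equation*}
\langle\nabla^{\varGamma\myn}(\mathcal{I}F)(\bar{\gamma}),\nabla^{\varGamma\myn}(\mathcal{I}G)(\bar{\gamma})\rangle_{\bar{\gamma}}=\langle\nabla^{\varGamma\myn}F(\mathfrak{p}(\bar{\gamma})),\nabla^{\varGamma\myn}G(\mathfrak{p}(\bar{\gamma}))\rangle_{\mathfrak{p}(\bar{\gamma})}.
\end{equation*}

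Next I would invoke the change-of-measure formula $\mucl=\mathfrak{p}^{*}\pi_{\lambda^{\star}}$ (Theorem~\ref{th:mucl}) to rewrite
\begin{equation*}
\mathcal{E}_{\mucl}(F,G)=\int_{\varGamma_\frakX}\langle\nabla^{\varGamma\myn}F(\mathfrak{p}(\bar{\gamma})),\nabla^{\varGamma\myn}G(\mathfrak{p}(\bar{\gamma}))\rangle_{\mathfrak{p}(\bar{\gamma})}\,\pi_{\lambda^{\star}}(\rd\bar{\gamma})=\mathcal{E}_{\pi_{\lambda^{\star}}}(\mathcal{I}F,\mathcal{I}G).
\end{equation*}
Before applying (\ref{gen-poisson}), I must check that $\mathcal{I}F,\mathcal{I}G\in\mathcal{FC}_{\lambda^{\star}}(\varGamma_\frakX)$; this was already observed in the proof of Theorem~\ref{IBP-} and follows from locality of $F,G$ together with Proposition~\ref{prop1}, which guarantees $\lambda^{\star}(\frakX_K)<\infty$ for the compact set $K$ containing the supports of the test functions defining $F,G$. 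Then the extended validity of (\ref{gen-poisson}) on $\mathcal{FC}_{\lambda^{\star}}(\varGamma_\frakX)$ (noted in the remark after~(\ref{gen-poisson1})) gives
\begin{equation*}
\mathcal{E}_{\pi_{\lambda^{\star}}}(\mathcal{I}F,\mathcal{I}G)=\int_{\varGamma_\frakX}(H_{\pi_{\lambda^{\star}}}\mathcal{I}F)(\bar{\gamma})\,(\mathcal{I}G)(\bar{\gamma})\,\pi_{\lambda^{\star}}(\rd\bar{\gamma}).
\end{equation*}

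Finally, by the definition of the adjoint $\mathcal{I}^{*}$ and the definition $H_{\mucl}:=\mathcal{I}^{*}H_{\pi_{\lambda^{\star}}}\mathcal{I}$ in (\ref{eq:H-mu}), the last integral equals $\int_{\varGamma_X}(H_{\mucl}F)(\gamma)\,G(\gamma)\,\mucl(\rd\gamma)$, which proves (\ref{generator}). Non-negativity of $H_{\mucl}$ on $\mathcal{FC}(\varGamma_X)$ is then immediate from $\mathcal{E}_{\mucl}(F,F)\ge 0$ (an inner product of gradients against itself). The main obstacle is purely a bookkeeping one: ensuring that the partial derivatives $\nabla_{\mynn x_i}(\mathcal{I}F)$ are genuinely well defined (using the open neighbourhood convention $\mathcal{O}_{\bar{\gamma},\mypp\bar{x}}$ in $\frakX$ and the fact that within such a neighbourhood coordinates of $\bar{x}$ vary independently and all other cluster vectors remain fixed), and that the resulting expressions really coincide with $\nabla_{\mynn x_i}F$ evaluated at the projected configuration. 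Once this chain rule is established, everything reduces to the Poisson case treated in \cite{AKR1}.
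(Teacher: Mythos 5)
Your proposal is correct and follows essentially the same route as the paper: compute the lifted quadratic form via the chain rule for $\mathcal{I}$, identify $\mathcal{E}_{\mucl}(F,G)=\mathcal{E}_{\pi_{\lambda^{\star}}}(\mathcal{I}F,\mathcal{I}G)$ using $\mucl=\mathfrak{p}^{*}\pi_{\lambda^{\star}}$, and then apply the Poisson-case identity (\ref{gen-poisson}) on $\mathcal{FC}_{\lambda^{\star}}(\varGamma_{\frakX})$ together with the definition $H_{\mucl}=\mathcal{I}^{*}H_{\pi_{\lambda^{\star}}}\mathcal{I}$. The extra care you take with the well-definedness of the partial gradients of $\mathcal{I}F$ is consistent with, and slightly more explicit than, the paper's treatment.
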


\proof Let us fix $F,G\in \mathcal{FC}(\varGamma _{ X})$ and set
$Q(\gamma ):=\langle \nabla^{\varGamma }F(\gamma ),\nabla^{\varGamma
}G(\gamma )\rangle _{\gamma }$. From the definition (\ref{eq:I}) of
the operator ${\mathcal{I}}$, it readily follows that
\begin{equation}\label{eq:IF}
({\mathcal{I}}Q) (\bar{\gamma }) =\sum_{x\in
\mathfrak{p}(\bar{\gamma})} \nabla _{\mynn
x}\myp{\mathcal{I}}F(\bar{\gamma })\CD \nabla _{\mynn x}
\myp{\mathcal{I}}\myp G(\bar{\gamma }) =\sum_{\bar{x}\in \bar{\gamma
}} \nabla _{\mynn \bar{x}}\myp{\mathcal{I}}F(\bar{\gamma})\CD
\nabla_{\mynn \bar{x}}\myp{\mathcal{I}}\myp G(\bar{\gamma }),
\end{equation}
where $\nabla _{\mynn \bar{x}}:=(\nabla _{\mynn x_{1}},\dots,\nabla
_{\mynn x_{n}})$ when $\bar{x}=(x_{1},\dots,x_{n})\in X^n$
\,($n\in\NN$). Thus, by Theorem \ref{th:mucl} and formulas
(\ref{eq:I}) and (\ref{eq:IF}) we obtain
\begin{align}
\mathcal{E}_{\mucl}(F,G)&=\int_{\varGamma _{ X}}Q(\gamma
)\,\mucl(\rd\gamma )=\int_{\varGamma
_{\frakX}}({\mathcal{I}}Q)(\bar{\gamma })\,\pi
_{\lambda^{\star}}(\rd\bar{\gamma })  \notag \\
&=\int_{\varGamma _{\frakX}}\sum_{\bar{x}\in \bar{\gamma }}\nabla
_{\mynn \bar{x}}\myp{\mathcal{I}}F(\bar{\gamma })\CD \nabla _{\mynn
\bar{x}}\myp{\mathcal{I}}\myp G(\bar{\gamma })\,\pi
_{\lambda^{\star}}(\rd\bar{\gamma })=\mathcal{E}_{\pi
_{\lambda^{\star}}}({\mathcal{I}}F,{\mathcal{I}}\myp G)
\label{df-corr}
\end{align}
(note that ${\mathcal{I}}F,{\mathcal{I}}\myp G\in
\mathcal{FC}_{\lambda^{\star}}(\varGamma_{\frakX})\subset
\mathcal{D}(\mathcal{E}_{\pi _{\lambda^{\star}}})$). Finally,
combining (\ref{df-corr}) with formula (\ref{gen-poisson}) we get
(\ref{generator}).
\endproof

\begin{remark}\normalfont
The operator $H_{\mucl}$ defined in (\ref{eq:H-mu}) can be
represented in the following form separating its diffusive and drift
parts:
\begin{equation}\label{generator1}
(H_{\mucl}F)(\gamma )=-\sum_{x\in \gamma }\Delta _{x}F(\gamma)-
({{\mathcal{I}}}^{*}\varPsi_F) (\gamma ), \qquad F\in
\mathcal{FC}(\varGamma _{ X}),
\end{equation}
where \myp$\varPsi_F(\bar {\gamma }):=\sum _{\bar {x}\in \bar
{\gamma }} \nabla _{\myn \bar {x}}\mypp{{\mathcal{I}}}F(\bar {\gamma
})\myn\CD \beta _{\lambda ^{\star }}(\myp\bar {x})$
\,($\bar{\gamma}\in\varGamma_{\frakX}$).
\end{remark}

\begin{remark}\normalfont
Formulas (\ref{generator}) and (\ref{generator1}) can also be
obtained directly from the IBP formula (\ref{IBP2}).
\end{remark}

\subsection{The associated equilibrium stochastic dynamics}\label{sec:equil}

Formula (\ref{generator}) implies that the form
$\mathcal{E}_{\mucl}$ is closable on $L^{2}(\varGamma _{ X},\mucl)$,
and we preserve the same notation for its closure. Its domain
$\mathcal{D}(\mathcal{E}_{\mucl})$ is obtained as a completion of
$\mathcal{FC}(\varGamma _{ X})$ with respect to the norm
\begin{equation*}
\left\| F\right\| _{\mathcal{E}_{\mucl}\!}:=\left(
\mathcal{E}_{\mucl}(F,F)+\int_{\varGamma _{ X}}F^{2}\,\rd
\mucl\right)^{1/2}.
\end{equation*}
In the canonical way, the Dirichlet form
$(\mathcal{E}_{\mucl},\mathcal{D}(\mathcal{E}_{\mucl}))$ defines a
non-negative self-adjoint operator in $L^2(\varGamma_X, \mucl)$
(i.e., the Friedrichs extension of
$H_{\mucl}={\mathcal{I}}^{*}H_{\pi _{\lambda^{\star}}}{\mathcal{I}}$
from the domain $\mathcal{FC}(\varGamma _{ X})$), for which we keep
the same notation $H_{\mucl}$. In turn, this operator generates the
semigroup $\exp(-tH_{\mucl})$ in $L^2(\varGamma_X, \mucl)$.

According to a general result (see \cite[\S\,4]{MR}), it follows
that \,$\mathcal{E}_{\mucl}$ is a quasi-regular local Dirichlet form
on a bigger space $L^{2}(\overset{\,\myp..}{\varGamma }_{
X},\mucl)$, where $\overset{\,..}{\varGamma }_{ X}$ is the space of
all locally finite configurations $\gamma$ with possible multiple
points (note that $\overset{\,..}{\varGamma }_{ X}$ can be
identified in the standard way with the space of
$\mathbb{Z}_{+}$-valued Radon measures on $X$, cf.\
\cite{AKR1,MR,RS}). Then, by the general theory of Dirichlet forms
(see \cite{MR0}), we obtain the following result.

\begin{theorem}\label{th:7.2}
There exists a conservative diffusion process
$\mathbf{X}=(\mathbf{X}_t,\,t\ge0)$ on
$\overset{\,\myp..}{\varGamma}_{X}$, properly associated with the
Dirichlet form $\mathcal{E}_{\mucl}$\textup{;} that is, for any
function $F\in L^{2}(\overset{\,\myp..}{\varGamma }_{ X},\mucl)$ and
all\/ $t\ge0$, the mapping
\begin{equation*}
\overset{\,\myp..}{\varGamma}_{X}\ni \gamma \mapsto p_{t}F(\gamma)
:=\int_{\varOmega} F(\mathbf{X}_{t})\,\rd P_{\gamma}
\end{equation*}
is an\/ $\mathcal{E}_{\mucl}$-quasi-continuous version of\/
$\exp(-tH_{\mucl}) F$. Here $\varOmega$ is the canonical sample
space \textup{(}of $\overset{\,\myp..}{\varGamma}_X$-valued
continuous functions on $\mathbb{R}_+$\textup{)} and
$(P_\gamma,\,\gamma\in\overset{\,\myp..}{\varGamma}_X)$ is the
family of probability distributions of the process $\mathbf{X}$
conditioned on the initial value $\gamma=\mathbf{X}_0$. The process
$\mathbf{X}$ is unique up to $\mucl$-equivalence. In particular,
$\mathbf{X}$ is $\mucl$-symmetric \textup{(}i.e., $\int F\myp
p_{t}G\,\rd\mucl = \int G\myp p_{t} F\,\rd\mucl$ for all measurable
functions $F,G:\overset{\,\myp..}{\varGamma}_{
X}\to\mathbb{R}_{+}$\textup{)} and $\mucl$ is its invariant measure.
\end{theorem}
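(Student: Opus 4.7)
The plan is to invoke the general theory of quasi-regular local Dirichlet forms (Ma--R\"ockner) once we know that $\mathcal{E}_{\mucl}$ is closable and satisfies the appropriate regularity on a Polish state space. The closability on $L^{2}(\varGamma_X,\mucl)$ is already available from Theorem~\ref{th:Hcl}, since the representation $\mathcal{E}_{\mucl}(F,G)=\int H_{\mucl} F\cdot G\,\rd\mucl$ with $H_{\mucl}$ a symmetric non-negative operator on a dense domain automatically yields closability. The natural state space for the abstract theory is not $\varGamma_X$ itself but the larger space $\overset{\,\myp..}{\varGamma}_X$ of locally finite $\ZZ_+$-valued Radon measures on $X$, which, equipped with the vague topology, is Polish; under our standing assumptions $\mucl$ is concentrated on $\varGamma_X\subset\overset{\,\myp..}{\varGamma}_X$, so passing to this larger space does not alter the Hilbert space $L^{2}(\overset{\,\myp..}{\varGamma}_X,\mucl)$.

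First I would extend the form $(\mathcal{E}_{\mucl},\mathcal{FC}(\varGamma_X))$ to a densely defined closed form on $L^{2}(\overset{\,\myp..}{\varGamma}_X,\mucl)$ and verify the two key properties needed for Ma--R\"ockner: (i) the \emph{local property}, $\mathcal{E}_{\mucl}(F,G)=0$ whenever $F,G\in\mathcal{D}(\mathcal{E}_{\mucl})$ have disjoint supports, which is immediate from the pointwise formula \eqref{eq:E-mu}, since then $\nabla^{\varGamma}\!F$ and $\nabla^{\varGamma}G$ vanish on complementary parts of any configuration; and (ii) \emph{quasi-regularity}, which requires an $\mathcal{E}_{\mucl}$-nest of compacts in $\overset{\,\myp..}{\varGamma}_X$ together with a countable family of $\mathcal{E}_{\mucl}$-quasi-continuous functions separating points. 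For quasi-regularity I would exploit the projection construction: the identity $\mathcal{E}_{\mucl}(F,G)=\mathcal{E}_{\pi_{\lambda^\star}}(\mathcal{I}F,\mathcal{I}G)$ derived in the proof of Theorem~\ref{th:Hcl}, together with the isometry $\mathcal{I}$ and the measurability of $\mathfrak{p}$, should allow transfer of the quasi-regularity established for Poisson measures in \cite{AKR1,MR} from $\varGamma_{\frakX}$ down to $\varGamma_X$.

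Once quasi-regularity and locality are in hand, the Ma--R\"ockner theorem yields a $\mucl$-symmetric Markov diffusion $\mathbf{X}=(\mathbf{X}_t)_{t\ge0}$ with continuous sample paths, properly associated with $\mathcal{E}_{\mucl}$, such that $p_tF(\gamma)=\EE_\gamma[F(\mathbf{X}_t)]$ is an $\mathcal{E}_{\mucl}$-quasi-continuous version of $\exp(-tH_{\mucl})F$, and uniqueness up to $\mucl$-equivalence is part of the same theorem. Conservativity follows from the fact that $\mathbf{1}\in\mathcal{D}(\mathcal{E}_{\mucl})$ with $\mathcal{E}_{\mucl}(\mathbf{1},\mathbf{1})=0$ (equivalently, $H_{\mucl}\mathbf{1}=0$), a property that descends through $\mathcal{I}$ from the analogous conservativity for $\pi_{\lambda^\star}$. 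The $\mucl$-symmetry of the transition semigroup $p_t$ is automatic from the self-adjointness of $H_{\mucl}$, and invariance of $\mucl$ under $p_t$ then follows by taking $G\equiv\mathbf{1}$ in the symmetry relation.

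The main obstacle will be verifying quasi-regularity in the generalized setting: the arguments in \cite{AKR1} are formulated for simple locally finite configurations under a locally finite intensity, whereas here $\lambda^\star$ may merely be $\sigma$-finite and the natural space $\varGamma_{\frakX}$ carries a measure that does not immediately satisfy the topological hypotheses of \cite{AKR1}. I expect the cleanest route is the one indicated in the statement, namely to invoke \cite[\S\,4]{MR} directly on $\overset{\,\myp..}{\varGamma}_X$, where the passage to $\ZZ_+$-valued Radon measures with the vague topology provides a Polish framework under which cylinder functions $F(\gamma)=f(\langle\phi_1,\gamma\rangle,\dots,\langle\phi_k,\gamma\rangle)$ with $\phi_i\in C_0^\infty(X)$ form the required core, and the $\mathcal{E}_{\mucl}$-nest is built from level sets $\{\gamma:\gamma(K_n)\le N_n\}$ for a compact exhaustion $K_n\uparrow X$ using the finiteness of first moments of $\mucl$ already exploited in Section~\ref{sec:Dir-mu}.
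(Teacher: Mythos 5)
Your proposal is correct and follows essentially the same route as the paper: closability is taken from Theorem~\ref{th:Hcl}, the form is transferred to the larger space $\overset{\,\myp..}{\varGamma}_{X}$ of $\ZZ_+$-valued Radon measures, quasi-regularity and locality are obtained from the general result of \cite[\S\,4]{MR}, and the diffusion is then produced by the Ma--R\"ockner theory \cite{MR0}. The paper's own proof is in fact terser than yours, simply citing these references without spelling out the nest construction or the conservativity and symmetry arguments you supply.
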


\begin{remark}\normalfont
It can be proved that in the case of Poisson and Gibbs measures,
under certain technical conditions the diffusion process
$\mathbf{X}$ actually lives on the proper configuration space
$\varGamma_X$ (see \cite{RS}). It is plausible that a similar result
should be valid for the Poisson cluster measure, but this is an open
problem.
\end{remark}

\begin{remark}\normalfont
Formula (\ref{gen-poisson}) implies that the ``pre-projection'' form
$\mathcal{E}_{\pi _{\lambda^{\star}}}$ is closable. According to the
general theory of Dirichlet forms \cite{MR0,MR}, its closure is a
quasi-regular local Dirichlet form on $\overset{\,..}{\varGamma
}_{\frakX}$ and as such generates a diffusion process
$\bar{\mathbf{X}}$ on $\overset{\,..}{\varGamma}_{\frakX}$. This
process coincides with the independent infinite particle process,
which amounts to independent distorted Brownian motions in $\frakX$
with drift given by the vector logarithmic derivative of $\lambda $
(see~\cite{AKR1}). However, it is not clear in what sense the
process $\mathbf{X}$ constructed in Theorem \ref{th:7.2} can be
obtained directly via the projection of $\bar{\mathbf{X}}$ from
$\overset{\,..}{\varGamma }_{\frakX}$ onto $\overset{\,..}{\varGamma
}_{ X}$.
\end{remark}

\subsection{Irreducibility of the Dirichlet form
$\mathcal{E}_{\mucl}$}\label{sec:irreduc}

Let us recall that a Dirichlet form $\mathcal{E}$ is called
\emph{irreducible} if the condition $\mathcal{E}(F,F)\allowbreak=0$
implies that $F=\const$.

\begin{theorem}\label{th:irr}
The Dirichlet form
$(\mathcal{E}_{\mucl},\mathcal{D}(\mathcal{E}_{\mucl}))$ is
irreducible.
\end{theorem}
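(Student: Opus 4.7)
The plan is to reduce the irreducibility of $\mathcal{E}_{\mucl}$ to the (known) irreducibility of the Poisson Dirichlet form $\mathcal{E}_{\pi_{\lambda^{\star}}}$ on $\varGamma_{\frakX}$, exploiting the isometry $\mathcal{I}$ introduced in (\ref{eq:I}) and the intertwining identity derived in the proof of Theorem~\ref{th:Hcl}.

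First, I would extend the relation
\begin{equation*}
\mathcal{E}_{\mucl}(F,G) = \mathcal{E}_{\pi_{\lambda^{\star}}}(\mathcal{I}F,\mathcal{I}\myp G),
\end{equation*}
established in (\ref{df-corr}) for $F,G\in\mathcal{FC}(\varGamma_{X})$, to all of $\mathcal{D}(\mathcal{E}_{\mucl})$. Since $\mucl=\mathfrak{p}^{*}\pi_{\lambda^{\star}}$, the operator $\mathcal{I}$ is an $L^{2}$-isometry, so if $F_{n}\in\mathcal{FC}(\varGamma_{X})$ approaches $F\in\mathcal{D}(\mathcal{E}_{\mucl})$ in the $\mathcal{E}_{\mucl}$-norm, then $(\mathcal{I}F_{n})$ is Cauchy in the $\mathcal{E}_{\pi_{\lambda^{\star}}}$-norm. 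Closedness of $\mathcal{E}_{\pi_{\lambda^{\star}}}$ then gives $\mathcal{I}F\in\mathcal{D}(\mathcal{E}_{\pi_{\lambda^{\star}}})$, with the identity surviving in the limit.

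Assume now that $\mathcal{E}_{\mucl}(F,F)=0$. The extended identity yields $\mathcal{E}_{\pi_{\lambda^{\star}}}(\mathcal{I}F,\mathcal{I}F)=0$. Invoking irreducibility of the Poisson Dirichlet form on $\varGamma_{\frakX}$ (see \cite{AKR1} for the classical case, adaptable here since $\lambda^{\star}$ is $\sigma$-finite by Proposition \ref{pr:pi*} and non-atomic by the absolute-continuity assumption of Section \ref{sec:QI-IBP}), we conclude that $\mathcal{I}F$ is $\pi_{\lambda^{\star}}$-a.s.\ equal to some constant $c$. By the definition of $\mathcal{I}$ and the change of measure $\mucl=\mathfrak{p}^{*}\pi_{\lambda^{\star}}$, this translates into $F(\gamma)=c$ for $\mucl$-a.a.\ $\gamma\in\varGamma_{X}$, proving irreducibility of $\mathcal{E}_{\mucl}$.

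The main obstacle is supplying a proof of irreducibility for $\mathcal{E}_{\pi_{\lambda^{\star}}}$ that is valid in our setting. The standard Fock-space argument (which identifies $L^{2}(\varGamma_{Y},\pi_{\nu})$ with the symmetric Fock space over $L^{2}(Y,\nu)$ and the Dirichlet operator with the number operator, whose kernel is the one-dimensional space of constants) applies to Poisson measures with non-atomic intensity; a subtlety here is that $\frakX=\bigsqcup_{n}\myn X^{n}$ is a disjoint union of manifolds and that $\lambda^{\star}$ may fail to be locally finite on $\frakX$ (see Remark \ref{rm:blowup}). However, the product decomposition $\pi_{\lambda^{\star}}=\bigotimes_{n}\pi_{p_{n}\lambda^{\star}_{n}}$ (see Section \ref{sec:3.2}) reduces the question to componentwise Poisson Dirichlet forms on the spaces $\varGamma_{X^{n}}^{\sharp}$, where $\lambda^{\star}_{n}$ is $\sigma$-finite and non-atomic, and the Fock-space argument applies verbatim; irreducibility of the product then follows from irreducibility of the factors together with the fact that the respective carré-du-champ operators act independently on disjoint components.
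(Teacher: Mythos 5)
Your overall strategy coincides with the paper's: reduce irreducibility of $\mathcal{E}_{\mucl}$ to that of $\mathcal{E}_{\pi_{\lambda^{\star}}}$ via the isometry $\mathcal{I}$ (the identity $\|F\|_{\mathcal{E}_{\mucl}}^{2}=\|\mathcal{I}F\|_{\mathcal{E}_{\pi_{\lambda^{\star}}}}^{2}$ does extend to the closure exactly as you describe, $\mathcal{I}\mathcal{D}(\mathcal{E}_{\mucl})\subset\mathcal{D}(\mathcal{E}_{\pi_{\lambda^{\star}}})$, and $\mathcal{I}F=\const$ forces $F=\const$ by the change of measure $\mucl=\mathfrak{p}^{*}\pi_{\lambda^{\star}}$), and then exploit the product decomposition $\pi_{\lambda^{\star}}=\bigotimes_{n}\pi_{p_{n}\lambda^{\star}_{n}}$ over the connected components $X^{n}$, where the componentwise irreducibility is classical. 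Up to that point your argument is sound and matches the paper.

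The gap is in your final sentence. For a \emph{finite} product, irreducibility of the factors does yield irreducibility of the product, via the identity $\Ker(A\boxplus B)=\Ker A\otimes\Ker B$ for non-negative self-adjoint operators (the paper proves this as a separate lemma using joint spectral resolutions). But for the \emph{infinite} product over $n\in\ZZ_{+}$ this is not automatic, and the phrase ``the carr\'e-du-champ operators act independently on disjoint components'' does not by itself close the argument: a priori the kernel of the generator could contain non-constant functions measurable with respect to the tail $\sigma$-algebra, on which all the finite-block gradients vanish. The paper handles this by writing $\frakX=X^{0}\sqcup\cdots\sqcup X^{n}\sqcup\tilde{\frakX}_{n+1}$ with $\tilde{\frakX}_{n+1}=\bigsqcup_{k\ge n+1}X^{k}$, applying the finite kernel lemma to get $\Ker H_{\pi_{\lambda^{\star}}}=\Ker H_{\tilde{\pi}_{n+1}}$ for every $n$, and then invoking triviality of the tail $\sigma$-algebra (Kolmogorov's zero--one law for the infinite product measure) to conclude that a function independent of every finite block of components is a.s.\ constant. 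You need to supply this tail-triviality step to make the last reduction rigorous; the rest of your proposal is in order.
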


\proof For any $F\in\mathcal{D}(\mathcal{E}_{\mucl})$, we have
\begin{align*}
\left\| F\right\| _{\mathcal{E}_{\mucl}}^{2}
&=\mathcal{E}_{\mucl}(F,F)+\int_{\varGamma _{ X}}F^{2}\,\rd\mucl \\
&=\mathcal{E}_{\pi
_{\lambda^{\star}}}({\mathcal{I}}F,{\mathcal{I}}F)+\int_{\varGamma
_{ X}} ({\mathcal{I}}F)^{2}\,\rd\pi_{\lambda^{\star}}=
\|{\mathcal{I}}F\|_{\mathcal{E}_{\pi _{\lambda^{\star}}}}^{2},
\end{align*}
which implies that
${\mathcal{I}}\mathcal{D}(\mathcal{E}_{\mucl})\subset
\mathcal{D}(\mathcal{E}_{\pi _{\lambda^{\star}}})$. It is obvious
that if ${\mathcal{I}}F=\const$ ($\pi _{\lambda^{\star}}$-a.s.) then
$F=\const$ ($\mucl$-a.s.). Therefore, according to formula
(\ref{df-corr}), it suffices to prove that the Dirichlet form
$(\mathcal{E}_{\pi _{\lambda^{\star}}},\mathcal{D}(\mathcal{E}_{\pi
_{\lambda^{\star}}}))$ is irreducible, which is established in Lemma
\ref{lm:irred} below.
\endproof

We first need the following general result (see
\cite[Lemma~3.3]{ADL}).

\begin{lemma}\label{kernel-of-sum}
Let $A$ and $B$ be self-adjoint, non-negative operators in separable
Hilbert spaces $\mathcal{H}$ and $\mathcal{K}$, respectively. Then
$\Ker(A\boxplus B)=\Ker A\otimes \Ker B$, where $A\boxplus B$ is the
closure of the operator $A\otimes I+I\otimes B$ from the algebraic
tensor product of the domains of $A$ and $B$.
\end{lemma}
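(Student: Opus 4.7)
The plan is to diagonalize the two operators simultaneously via the spectral theorem and reduce the claim to a multiplication-operator model. Since $\mathcal{H}$ and $\mathcal{K}$ are separable and $A$, $B$ are self-adjoint non-negative, there exist $\sigma$-finite measure spaces $(X_A,\mu_A)$ and $(X_B,\mu_B)$, unitary isomorphisms $U_A:\mathcal{H}\to L^2(X_A,\mu_A)$ and $U_B:\mathcal{K}\to L^2(X_B,\mu_B)$, and measurable functions $a\ge 0$, $b\ge 0$ such that $U_A A U_A^*$ acts as multiplication by $a(x)$ and $U_B B U_B^*$ as multiplication by $b(y)$. Under the canonical identification $\mathcal{H}\otimes\mathcal{K}\cong L^2(X_A\times X_B,\,\mu_A\otimes\mu_B)$, the operator $A\otimes I+I\otimes B$, initially defined on the algebraic tensor product of $\mathcal{D}(A)$ and $\mathcal{D}(B)$, corresponds to multiplication by the non-negative function $a(x)+b(y)$ on a dense subspace.

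The next step is to verify that the closure $A\boxplus B$ coincides with the multiplication operator $M_{a+b}$ on its maximal $L^2$-domain $\mathcal{D}(M_{a+b})=\{\phi\in L^2:(a+b)\phi\in L^2\}$. This operator is self-adjoint by standard functional calculus, and the algebraic tensor product of $\mathcal{D}(A)$ and $\mathcal{D}(B)$ is a core for it: given any $\phi\in\mathcal{D}(M_{a+b})$, the spectral truncations $\phi_n:=\phi\cdot\mathbf{1}_{\{a\le n\}}\cdot\mathbf{1}_{\{b\le n\}}$ can be approximated by elements of the algebraic tensor product (using separate approximation on each factor by simple functions), and $\phi_n\to\phi$ both in $L^2$ and in graph norm by dominated convergence. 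An equivalent, perhaps cleaner, route is via closed quadratic forms: both $A\boxplus B$ and $M_{a+b}$ are the self-adjoint operators associated with the closed non-negative form $q(\phi,\phi):=\int_{X_A\times X_B}(a(x)+b(y))\,|\phi(x,y)|^2\,\mu_A(\mathrm{d}x)\,\mu_B(\mathrm{d}y)$, and this form representation pins them down uniquely.

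With the multiplication-operator model at hand, the kernel computation is immediate. A function $\phi\in L^2(X_A\times X_B)$ lies in $\Ker M_{a+b}$ if and only if $(a(x)+b(y))\,\phi(x,y)=0$ almost everywhere. Since $a,b\ge 0$, the sum vanishes precisely on the product set $N_A\times N_B$, where $N_A:=\{x\in X_A:a(x)=0\}$ and $N_B:=\{y\in X_B:b(y)=0\}$. Hence $\phi$ must be supported on $N_A\times N_B$, giving $\Ker M_{a+b}=L^2(N_A,\mu_A|_{N_A})\otimes L^2(N_B,\mu_B|_{N_B})$, which under the unitaries $U_A$, $U_B$ pulls back to $\Ker A\otimes\Ker B$.

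The main technical obstacle is the core/closure identification in the middle step: verifying that the closure of $A\otimes I+I\otimes B$ from the algebraic tensor product of domains yields the \emph{full} multiplication operator $M_{a+b}$ on its maximal domain, rather than a proper restriction. The quadratic-form viewpoint sidesteps the graph-norm approximation directly, and is the cleanest path; the final kernel computation is then routine algebra of measurable sets.
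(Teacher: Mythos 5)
Your proof is correct, but it follows a genuinely different route from the paper's. The paper argues abstractly: for $f\in \Ker(A\boxplus B)$ it writes $0=(A\boxplus B f,f)$ as an integral of $x_1+x_2$ against the joint resolution of the identity of the commuting operators $A\otimes I$ and $I\otimes B$, splits this into the two non-negative terms $(A\otimes I f,f)+(I\otimes B f,f)$, concludes both vanish, and hence $f\in\Ker(A\otimes I)\cap\Ker(I\otimes B)=\Ker A\otimes\Ker B$. You instead pass to the concrete multiplication-operator model and read the kernel off the equation $(a+b)\phi=0$ a.e.; the final set-theoretic step is the same positivity observation in disguise. What your route costs is the extra middle step identifying the closure with the maximal multiplication operator $M_{a+b}$ — note that for the kernel computation this is actually more than you need, since the trivial inclusion $A\boxplus B\subset M_{a+b}$ (closedness of $M_{a+b}$ plus agreement on the algebraic tensor product) already gives $\Ker(A\boxplus B)\subset\Ker M_{a+b}$, and the reverse inclusion of the lemma is immediate. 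Also be aware that your ``cleaner'' quadratic-form variant identifies the \emph{Friedrichs extension} with $M_{a+b}$, which does not by itself identify the operator closure with $M_{a+b}$ without invoking essential self-adjointness; your graph-norm truncation argument is the one that actually does the job (and is essentially the standard proof that $A\otimes I+I\otimes B$ is essentially self-adjoint on the algebraic tensor product of the domains). What your approach buys is concreteness and independence from the ``separation of variables'' machinery the paper cites; what the paper's buys is brevity and complete avoidance of any core/closure discussion.
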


\proof $\Ker A$ and $\Ker B$ are closed subspaces of $\mathcal{H}$
and $\mathcal{K}$, respectively, and so their tensor product $\Ker
A\otimes \Ker B$ is a closed subspace of the space
$\mathcal{H}\otimes \mathcal{K}$. The inclusion $\Ker A\otimes \Ker
B\subset \Ker (A\boxplus B)$ is trivial. Let $f\in \Ker (A\boxplus
B)$. Using the theory of operators admitting separation of variables
(see, e.g., \cite[Ch.~6]{Ber}), we have
\begin{align}
0=(A\boxplus Bf,f)&
=\int_{\mathbb{R}_{+}^{2}}(x_{1}+x_{2})\,\rd(E(x_{1},x_{2})f,f)
\notag \\
&=\int_{\mathbb{R}_{+}^{2}}x_{1}\,\rd(E(x_{1},x_{2})f,f)+\int_{\mathbb{R}
_{+}^{2}}x_{2}\,\rd(E(x_{1},x_{2})f,f)  \notag \\
& =(A\otimes If,f)+(I\otimes Bf,f),  \label{dderuz}
\end{align}
where $E$ is a joint resolution of the identity of the commuting
operators $A\otimes I$ and $I\otimes B$. Since both operators
$A\otimes I$ and $I\otimes B$ are non-negative, we conclude from
(\ref{dderuz}) that
\begin{equation*}
f\in \Ker (A\otimes I)\cap \Ker (I\otimes B)=\Ker A\otimes \Ker B,
\end{equation*}
which completes the proof of the lemma.
\endproof

\begin{lemma}\label{lm:irred}
The Dirichlet form $(\mathcal{E}_{\pi
_{\lambda^{\star}}},\mathcal{D}(\mathcal{E}_{\pi_{\lambda^\star}}))$
is irreducible.
\end{lemma}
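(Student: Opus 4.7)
The plan is to exploit the product structure of $\pi_{\lambda^\star}$ induced by the disjoint decomposition $\frakX=\bigsqcup_{n\in\ZZ_+} X^n$ together with the corresponding decomposition $\lambda^\star=\bigoplus_n p_n\mypp\lambda^\star_n$ (see (\ref{eq:star})). By Proposition~\ref{pr:product},
\[
\pi_{\lambda^\star}=\bigotimes_{n\in\ZZ_+}\pi_{p_n\lambda^\star_n} \quad\text{on}\quad L^2(\varGamma_\frakX^\sharp,\pi_{\lambda^\star})=\bigotimes_{n\in\ZZ_+}L^2(\varGamma_{X^n}^\sharp,\pi_{p_n\lambda^\star_n}),
\]
and because the gradient on $\frakX$ is localized within the component $X^n$ containing a given cluster point $\bar{x}\in\bar{\gamma}$, the form $\mathcal{E}_{\pi_{\lambda^\star}}$ splits as the form-sum of the factor Dirichlet forms $\mathcal{E}_{\pi_{p_n\lambda^\star_n}}$; correspondingly, at the level of closures, $H_{\pi_{\lambda^\star}}=\boxplus_{n\in\ZZ_+} H_{\pi_{p_n\lambda^\star_n}}$.

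An iterated application of Lemma~\ref{kernel-of-sum} then yields
\[
\Ker H_{\pi_{\lambda^\star}}=\bigotimes_{n\in\ZZ_+}\Ker H_{\pi_{p_n\lambda^\star_n}},
\]
so the problem reduces to showing that each factor kernel consists only of constants. For $n=0$, the measure $\pi_{p_0\lambda^\star_0}$ is a Dirac mass (see the remark following Proposition~\ref{pr:pi*}), its $L^2$-space is one-dimensional, and the claim is trivial. For each $n\ge1$ with $p_n>0$, the space $X^n=\RR^{dn}$ is connected and, in view of the standing hypothesis (\ref{QI}) combined with (\ref{eq:s-sn}), the intensity $\lambda^\star_n$ has a strictly positive Lebesgue density wherever $s_n>0$. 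I would then invoke the classical irreducibility theorem for Dirichlet forms of Poisson measures over connected Euclidean spaces established in \cite{AKR1}, which delivers $\Ker H_{\pi_{p_n\lambda^\star_n}}=\{\mathrm{const}\}$. Combining the factors, the infinite tensor product of one-dimensional spaces is one-dimensional, which is precisely the irreducibility of $\mathcal{E}_{\pi_{\lambda^\star}}$.

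The main obstacle will be the rigorous justification of the tensor-product decomposition at the form level: I would need to verify that product cylinder functions form a core for the closure of $\mathcal{E}_{\pi_{\lambda^\star}}$, so that the algebraic sum of the factor generators indeed closes to $\boxplus_{n} H_{\pi_{p_n\lambda^\star_n}}$, and to handle the infinite tensor product by monotone approximation with finitely many factors (applying Lemma~\ref{kernel-of-sum} twice at a time). A secondary point is to make sure that the AKR1 irreducibility argument transfers cleanly to the \emph{generalized} configuration space $\varGamma_{X^n}^\sharp$ equipped with the $\sigma$-finite (and typically infinite, since $\lambda^\star_n(X^n)=\lambda(X)$) intensity $\lambda^\star_n$; in particular, one must rule out non-trivial kernel elements of the form $F(\bar{\gamma})=f(\bar{\gamma}(X^n))$, which follows from $\lambda^\star_n(X^n)=\infty$ forcing $\bar{\gamma}(X^n)=\infty$ almost surely.
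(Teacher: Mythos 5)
Your overall strategy is the paper's: decompose $\pi_{\lambda^\star}$ as a product over the components $X^n$ via Proposition~\ref{pr:product}, split the Dirichlet operator as a $\boxplus$-sum, apply Lemma~\ref{kernel-of-sum}, and quote the irreducibility of Poisson Dirichlet forms over connected manifolds from \cite{AKR1} for each factor. The one step you leave open is, however, the genuinely nontrivial one: the identity $\Ker H_{\pi_{\lambda^\star}}=\bigotimes_{n}\Ker H_{\pi_{p_n\lambda^\star_n}}$ for an \emph{infinite} $\boxplus$-sum is not an instance of Lemma~\ref{kernel-of-sum}, which handles exactly two factors, and ``applying it twice at a time'' does not terminate. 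The paper's device for closing this gap is worth internalizing: for each fixed $n$ one writes $\frakX=X^0\sqcup\cdots\sqcup X^n\sqcup\tilde{\frakX}_{n+1}$ with the \emph{residual} space $\tilde{\frakX}_{n+1}=\bigsqcup_{k\ge n+1}X^k$ carrying the tail measure $\tilde\lambda^\star_{n+1}=\sum_{k\ge n+1}p_k\lambda^\star_k$, so that $H_{\pi_{\lambda^\star}}=H_{\pi_1}\boxplus\cdots\boxplus H_{\pi_n}\boxplus H_{\tilde\pi_{n+1}}$ is a \emph{finite} sum; finitely many applications of Lemma~\ref{kernel-of-sum} together with $\Ker H_{\pi_k}=\RR$ give $\Ker H_{\pi_{\lambda^\star}}=\Ker H_{\tilde\pi_{n+1}}$ for every $n$. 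One never needs to identify the tail kernel itself: since $n$ is arbitrary, any $F\in\Ker H_{\pi_{\lambda^\star}}$ depends on no finite collection of components, and tail triviality of the infinite product measure (Kolmogorov's zero--one law) forces $F=\const$. This replaces both of the issues you flagged — the core/closure question for an infinite form-sum and the convergence of the infinite tensor product of kernels — by a finite-dimensional argument plus a measure-theoretic tail argument. Your remaining points (the $n=0$ factor being a Dirac mass, and the applicability of \cite{AKR1} to each $\pi_{p_n\lambda^\star_n}$ on the connected space $X^n$) are fine and consistent with the paper.
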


\begin{remark}\normalfont
Irreducibility of Dirichlet forms
associated with Poisson measures on configuration spaces of
connected Riemannian manifolds was shown in \cite{AKR1}. However,
the space $\frakX$ consists of countably many disjoint connected
components $ X^{n}$, so we need to adapt the result of \cite{AKR1}
to this situation.
\end{remark}

\begin{proof}[Proof of Lemma \textup{\ref{lm:irred}}]
Let us recall that, according to the general theory (see, e.g.,
\cite{AKR97}), irreducibility of a Dirichlet form is equivalent to
the condition that the kernel of its generator consists of constants
(\emph{uniqueness of the ground state}). Thus, it suffices to prove
that $\Ker H_{\pi _{\lambda^{\star}}}=\{\const\}$.

Let us consider the ``residual'' spaces
$\tilde{\mathfrak{X}}_{n}:=\bigsqcup_{k=n}^{\infty} X^{k}$,
\,$n\in\mathbb{Z}_+$\myp, endowed with the measures
$\tilde\lambda^\star_{n}:=\sum_{k=n}^{\infty}
p_{k}\myp\lambda^\star_{k}$\myp. Hence, $\mathfrak{X}= X^0 \sqcup
X^{1} \sqcup \cdots\sqcup X^{n}\sqcup \tilde{\mathfrak{X}}_{n+1}$,
which implies that $\varGamma _{\frakX}=\varGamma _{X^0}\times
\varGamma _{X^1}\times \dots\times \varGamma _{ X^{n}}\times
\varGamma _{\tilde{\mathfrak{X}}_{n+1}}$ and, according to
Proposition \ref{pr:product}, $\pi _{\lambda^{\star}}= \pi_0\otimes
\pi_1\otimes \dots\otimes \pi _{n}\otimes \tilde\pi _{n+1}$\myp,
where we use a shorthand notation $\pi_n:= \pi
_{p_{n}\lambda^\star_{n}}$, \,$\tilde\pi_n:=\pi
_{\tilde\lambda^\star_{n}}$. Therefore, there is an isomorphism of
Hilbert spaces
$$
L^{2}(\varGamma _{\frakX},\pi _{\lambda^{\star}})\cong
L^{2}(\varGamma _{ X},\pi_{1})\otimes \cdots \otimes L^{2}(\varGamma
_{ X^{n}},\pi _{n})\otimes L^{2}(\varGamma _{\frakX_{n+1}},\tilde
\pi _{n+1}).
$$
Consequently, the Dirichlet operator $H_{\pi _{\lambda^{\star}}}$
can be decomposed as
\begin{equation}\label{eq:box}
H_{\pi _{\lambda^{\star}}}= H_{\pi _{1}}\mynn\boxplus \dots\boxplus
H_{\pi_{n}}\myn\boxplus H_{\tilde\pi_{n+1}}.
\end{equation}
Since all operators on the right-hand side of (\ref{eq:box}) are
self-adjoint and non-negative, it follows by Lemma
\ref{kernel-of-sum} that
\begin{equation}\label{eq:Ker}
\Ker H_{\pi _{\lambda^{\star}}\!}= \Ker H_{\pi_{1}}\mynn\otimes
\dots\otimes \Ker H_{\pi_{n}}\myn\otimes \Ker H_{\tilde\pi_{n+1}}.
\end{equation}
The Dirichlet forms of all measures $\pi_{k}$ are irreducible (as
Dirichlet forms of Poisson measures on connected manifolds), hence
$\Ker H_{\pi_{k}}=\mathbb{R}$ and (\ref{eq:Ker}) implies that $\Ker
H_{\pi _{\lambda^{\star}}\!}=\Ker H_{\tilde\pi _{n+1}}$. Since $n$
is arbitrary, it follows that every function $F\in \Ker
H_{\pi_{\lambda^{\star}}}$ does not depend on any finite number of
variables, and thus $F=\const$ ($\pi _{\lambda^{\star}}$-a.s.).
\end{proof}

\begin{remark}\normalfont
The result of Lemma \ref{lm:irred} (and the idea of its proof) can
be viewed as a functional-analytic analogue of Kolmogorov's
zero\myp--\myp{}one law (see, e.g., \cite[Ch.~3]{Kal}), stating that
for a sequence of independent random variables $(X_n)$, the
corresponding tail $\sigma$-algebra
$\mathcal{F}_\infty:=\bigcap_{n}\mathcal{F}_{\ge n}$ is trivial
(where $\mathcal{F}_{\ge n}:=\sigma\{X_k: k\ge n\}$), and in
particular, all $\mathcal{F}_\infty$-measurable random variables are
a.s.\ constants.
\end{remark}

\begin{remark}\normalfont
According to the general theory of Dirichlet forms (see, e.g.,
\cite{AKR97}), the irreducibility of\/ $\mathcal{E}_{\mucl}$ is
equivalent to each of the following properties:
\begin{enumerate}
\item[\rm (i)]
\textit{The semigroup}\/ $\re^{-tH_{\mucl}}$ \textit{is
$L^{2}$-ergodic, that is, as $t\rightarrow \infty $},
\begin{equation*}
\int_{\varGamma _{ X}}\left( \re^{-tH_{\mucl}}F(\gamma
)-\int_{\varGamma _{ X}}F(\gamma )\,\mucl(\rd\gamma )\right)
^{2}\mucl(\rd\gamma )\rightarrow 0\myp.
\end{equation*}

\item[\rm (ii)]
\textit{If\/ $F\in \mathcal{D}(H_{\mucl})$ and $H_{\mucl}F=0$ then
$F=\const$.}
\end{enumerate}
\end{remark}

\section{Appendix}

\subsection{Proof of Theorem \ref{th:properClusterPoisson}}\label{app1}

Note that the droplet cluster
$D_B(\gamma_0^{\myp\prime})=\bigcup_{y\in\gamma_0^{\myp\prime}}
(B-y)$ (see~(\ref{eq:D})) can be decomposed into disjoint components
according to the number of constituent ``layers'' (including
infinitely many):
$$
D_B(\gamma_0^{\myp\prime})=\bigcup_{1\le \myp\ell\myp\le \infty}
D_B^{\myp\ell}(\gamma_0^{\myp\prime}),
$$
where
$$
D_B^{\myp\ell}(\gamma_0^{\myp\prime}):=\{x\in X:
\,\gamma_0^{\myp\prime}(B-x)=\ell\},\qquad \ell\in\overline{\ZZ}_+.
$$

(a) Set $f_q:=-\ln q\cdot{\bf 1}_K\in\mathrm{M}_+(X)$ \,($0<q<1$),
then
\begin{align}\label{eq:expansion}
L_{\mucl}[f_q]=\int_{\varGamma
_{X}^\sharp}q^{\gamma(K)}\,{\mucl}(\rd\gamma) &=\sum_{n=0}^\infty
q^{n}\myp\mucl\{\gamma\in\varGamma_X^\sharp:\gamma(K)=n\}\\
\notag &\to
\mucl\{\gamma\in\varGamma_X^\sharp:\gamma(K)<\infty\}\qquad
(q\uparrow1).
\end{align}
Therefore, $\gamma(K)<\infty$ ($\mucl$-a.s.) if and only if
$\lim_{q\uparrow 1} \ln L_{\mucl}[f_q]=0$.

Clearly, condition (\ref{eq:condA1}) is necessary for local
finiteness of $\mucl$-a.a.\ configurations
$\gamma\in\varGamma_X^\sharp$. Furthermore, (\ref{eq:condA1})
implies that, for any compact set $K\subset X$ and any $x\in X$, we
have $\gamma_0^{\myp\prime}(K-x)<\infty$ ($\mu_0$-a.s.). Hence,
according to (\ref{eq:ClusterPoissonLT}),
\begin{align}
\notag -\ln L_{\mucl}[f_q]&= \int_{X}
\biggl(\int_{\varGamma^\sharp_{ X}}
\left(1-q^{\gamma_0^{\myp\prime}(K-x)}\right)
\mu_0(\rd \gamma_0^{\myp\prime})\biggr)\,\lambda(\rd x)\\
\notag &=\int_{\varGamma^\sharp_{ X}} \biggl(\int_{X}
\sum_{\ell\myp=\myp0}^{\infty} (1- q^{\ell}\myp)\,{\bf
1}_{D_K^{\myp\ell}(\gamma_0^{\myp\prime})}(x)\,
\lambda(\rd x)\biggr)\,\mu_0(\rd \gamma_0^{\myp\prime})\\
\label{eq:upper} &=\int_{\varGamma^\sharp_{ X}}
\sum_{\ell\myp=1}^\infty
(1-q^{\ell}\myp)\,\lambda\bigl(D_K^{\myp\ell}(\gamma_0^{\myp\prime})\bigr)\,\mu_0(\rd
\gamma_0^{\myp\prime}).
\end{align}
Note that, for $0<q<1$,
$$
0\le\sum_{\ell\myp=1}^{\infty} (1-
q^{\ell}\myp)\,\lambda\bigl(D_K^{\myp\ell}(\gamma_0^{\myp\prime})\bigr)\le
\sum_{\ell\myp=1}^{\infty}\lambda\bigl(D_K^{\myp\ell}(\gamma_0^{\myp\prime})\bigr)
=\lambda\bigl(D_K(\gamma_0^{\myp\prime})\bigr),
$$
so if condition (\ref{eq:condA2}) is satisfied then we can apply
Lebesgue's dominated convergence theorem and pass termwise to the
limit on the right-hand side of (\ref{eq:upper}) as $q\uparrow 1$,
which gives $\lim_{q\uparrow 1}\ln L_{\mucl}[f_q]=0$, as required.

Conversely, since
\begin{align*}
\sum_{\ell\myp=1}^{\infty} (1-
q^{\ell})\,\lambda\bigl(D_K^{\myp\ell}(\gamma_0^{\myp\prime})\bigr)&\ge
(1- q)\sum_{\ell\myp=1}^\infty
\lambda\bigl(D_K^{\myp\ell}(\gamma_0^{\myp\prime})\bigr)=(1-
q)\,\lambda\bigl(D_K(\gamma_0^{\myp\prime})\bigr)\ge0,
\end{align*}
from (\ref{eq:upper}) we must have
$$
(1-q)\int_{\varGamma^\sharp_{X}}
\lambda\bigl(D_K(\gamma_0^{\myp\prime})\bigr)\,\mu_0(\rd
\gamma_0^{\myp\prime})\to 0\qquad (q\uparrow 1),
$$
which implies (\ref{eq:condA2}).

(b) Let us first prove the ``only if'' part. Clearly, condition
(\ref{eq:condB1}) is necessary in order to avoid any in-cluster
ties. Furthermore, each fixed $x_0\in X$ cannot belong to more than
one cluster; in particular, for any $2\le \ell\le \infty$,
\begin{equation}\label{eq:D=0}
\lambda\left(D_{\{x_0\}}^{\myp\ell}(\gamma_0^{\myp\prime})\right)=0\qquad
(\mu_0\text{-a.s.})
\end{equation}
Let $f_q:=-\ln q\cdot {\bf 1}_{\{x_0\}}$ $(0<q<1)$. The expansion
(\ref{eq:expansion}) then implies that in order for $x_0$ to be
simple ($\mucl$-a.s.), $L_{\mucl}[f_q]$ must be a linear function of
$q$. But from (\ref{eq:upper}) and (\ref{eq:D=0}) we have
\begin{equation*}
L_{\mucl}[f_q]=\exp\Bigl\{-(1-q)\int_{\varGamma_ X^\sharp}
\lambda\bigl(D_{\{x_0\}}^{\myp\ell=1}(\gamma_0^{\myp\prime})\bigr)\mypp
\mu_0(\rd\gamma_0^{\myp\prime})\Bigr\},
\end{equation*}
and it follows that
$\lambda\bigl(D_{\{x_0\}}^{\myp\ell=1}(\gamma_0^{\myp\prime})\bigr)=0$
\,($\mu_0$-a.s.). Together with (\ref{eq:D=0}), this gives
$$
\lambda\bigl(D_{\{x_0\}}(\gamma_0^{\myp\prime})\bigr)=\sum_{1\le\myp
\ell\myp\le\infty}\lambda\bigl(D_{\{x_0\}}^{\myp\ell}
(\gamma_0^{\myp\prime})\bigr)=0\qquad (\mu_0\text{-a.s.}),
$$
and condition (\ref{eq:condB2}) follows.

To prove the ``if'' part, it suffices to show that, under conditions
(\ref{eq:condB1}) and (\ref{eq:condB2}), with probability one there
are no cross-ties between the clusters whose centres belong to a set
$\varLambda\subset X$, $\lambda(\varLambda)<\infty$. Conditionally
on the total number of cluster centres  in $\varLambda$ (which are
then i.i.d.\ and have the distribution
$\lambda(\cdot)/\lambda(\varLambda)$), the probability of a tie
between a given pair of (independent) clusters is given by
\begin{align*}
\frac{1}{\lambda(\varLambda)^2}\int_{\varGamma_
X^\sharp\times\varGamma_ X^\sharp}
\lambda^{\otimes\myp2}\bigl(B_\varLambda(\gamma_1,\gamma_2)\bigr)\,
\mu_0(\rd\gamma_1)\,\mu_0(\rd\gamma_2),
\end{align*}
where
$$
B_\varLambda(\gamma_1,\gamma_2):=\{(x_1,x_2)\in\varLambda^2:
\,x_1+y_1=x_2+y_2\ \text{\,for some }\, y_1\in\gamma_1,\
y_2\in\gamma_2\}.
$$
But
\begin{align*}
\lambda^{\otimes\myp2}\bigl(B_\varLambda(\gamma_1,\gamma_2)\bigr)
&=\int_\varLambda \lambda
\left(\textstyle\bigcup\nolimits_{y_1\in\gamma_1}
\!\bigcup\nolimits_{y_2\in\gamma_2}
\{x_1+y_1-y_2\}\right)\lambda(\rd
x_1)\\[.2pc]
&\le \sum_{y_1\in\gamma_1}\int_\varLambda \lambda
\left(\textstyle\bigcup\nolimits_{y_2\in\gamma_2}\mynn
\{x_1+y_1-y_2\}\right)\lambda(\rd
x_1)\\
&=\sum_{y_1\in\gamma_1}\int_\varLambda \lambda
\bigl(D_{\{x_1+y_1\}}(\gamma_2)\bigr)\,\lambda(\rd x_1) =0\qquad
(\mu_0\text{-a.s.}),
\end{align*}
since, by assumption (\ref{eq:condB2}),
$\lambda\bigl(D_{\{x_1+y_1\}}(\gamma_2)\bigr)=0$ \,($\mu_0$-a.s.)
and $\gamma_1$ is a countable set. Thus, the proof is complete.

\subsection{Quasi-invariance of Poisson measures} \label{app3}

The next general result is a direct consequence of Skorokhod's
theorem \cite{Sk} on the absolute continuity of Poisson measures
(see also \cite{AKR1}). Although essentially well known, we give its
simple proof adapted to our slightly more general setting, whereby
transformations $\varphi$ have support of finite measure rather than
compact.

Suppose that $\pi _{\lambda}$ is a Poisson measure on the
configuration space $\varGamma_{{X}}$ with intensity measure
$\lambda$. Let $\varphi : {X}\rightarrow {X}$ be a measurable
mapping; as explained earlier (see~(\ref{di*})), it can be lifted to
a (measurable) transformation of $\varGamma_{{X}}$:
\begin{equation}\label{eq:theta}
\varGamma_{{X}}\ni \gamma\mapsto \varphi (\gamma):=\{\varphi(x),\
x\in\gamma\}\in \varGamma_{X}.
\end{equation}

\begin{proposition}\label{q-i-Poisson}
Let $\varphi: X\to X$ be a measurable bijection such that
$\lambda(\supp\varphi)<\infty$. Assume that the measure $\lambda$ is
quasi-invariant with respect to $\varphi$, that is, the push-forward
measure $\varphi^{*}\lambda \equiv\lambda\circ \varphi^{-1}$ is
a.c.\ with respect to $\lambda $, with density
\begin{equation}\label{eq:rr}
\rho _{\lambda }^{\varphi }(x):=\frac{\varphi^{*}\lambda (\rd
x)}{\lambda (\rd x)}\mypp,\qquad x\in {X}.
\end{equation}
Then the measure $\pi_{\lambda}$ is quasi-invariant with respect to
the action \textup{(\ref{eq:theta})}, that is,
\begin{equation}\label{eq:pi-R}
\varphi^{*}\pi _{\lambda }(\rd\gamma )=R_{\pi _{\lambda }}^{\varphi
}(\gamma )\,\pi _{\lambda }(\rd\gamma ),\qquad
\gamma\in\varGamma_{X},
\end{equation}
where the density $R_{\pi _{\lambda }}^{\varphi}$ is given by
\begin{equation}\label{eq:R}
R_{\pi _{\lambda }}^{\varphi }(\gamma )=\exp
\left\{\int_{X}\bigl(1-\rho _{\lambda }^{\varphi }(x)\bigr)\,\lambda
(\rd x)\right\}\cdot\prod_{x\in \gamma }\rho_{\lambda }^{\varphi
}(x),\qquad \gamma\in\varGamma_{X},
\end{equation}
and moreover, $R_{\pi _{\lambda }}^{\varphi }\in
L^{2}(\varGamma_{X},\pi _{\lambda})$.
\end{proposition}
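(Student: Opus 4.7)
My plan is to reduce the claim to configurations in $\varLambda := \supp\varphi$, which has finite $\lambda$-measure by assumption, and then verify (\ref{eq:pi-R}) directly from the explicit construction (\ref{eq:P-Lambda}) of $\pi_\lambda^\varLambda$ on each symmetric power $\varLambda^n$. First I would observe that, since $\varphi$ is a bijection acting as the identity on $X\setminus\varLambda$, it necessarily restricts to a bijection of $\varLambda$ onto itself; hence $\rho_\lambda^\varphi \equiv 1$ off $\varLambda$ and
\begin{equation*}
\int_X\bigl(1-\rho_\lambda^\varphi(x)\bigr)\,\lambda(\rd x) = \lambda(\varLambda) - \varphi^*\lambda(\varLambda) = 0,
\end{equation*}
so the exponential prefactor in (\ref{eq:R}) equals $1$ in the present setting. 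The product decomposition $\pi_\lambda = \pi_\lambda^\varLambda \otimes \pi_\lambda^{X\setminus\varLambda}$ provided by Proposition \ref{pr:product} (with respect to the partition $X = \varLambda \sqcup (X\setminus\varLambda)$), together with the fact that the lifted action of $\varphi$ leaves the second factor fixed, then reduces (\ref{eq:pi-R}) to the identity
\begin{equation*}
\int_{\varGamma_\varLambda^\sharp} F(\varphi(\gamma))\,\pi_\lambda^\varLambda(\rd\gamma) = \int_{\varGamma_\varLambda^\sharp} F(\gamma)\prod_{x\in\gamma}\rho_\lambda^\varphi(x)\,\pi_\lambda^\varLambda(\rd\gamma)
\end{equation*}
for all bounded measurable $F:\varGamma_\varLambda^\sharp\to\RR$.

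To establish this identity I would expand the left-hand side using (\ref{eq:P-Lambda}) as
\begin{equation*}
\re^{-\lambda(\varLambda)}\sum_{n=0}^{\infty}\frac{1}{n!}\int_{\varLambda^n}\! F\bigl(\mathfrak{p}(\varphi(x_1),\dots,\varphi(x_n))\bigr)\,\lambda^{\otimes n}(\rd x_1\cdots \rd x_n),
\end{equation*}
and apply the coordinate-wise change of variables $y_i := \varphi(x_i)$. Iterating (\ref{eq:rr}) yields $\lambda^{\otimes n}(\rd\bar x) = \prod_{i=1}^n \rho_\lambda^\varphi(y_i)\,\lambda^{\otimes n}(\rd\bar y)$ on $\varLambda^n$; the symmetry of the product measure and of the projection $\mathfrak{p}$ ensures that the resulting factor $\prod_{i=1}^n \rho_\lambda^\varphi(y_i)$ descends unambiguously to $\prod_{x\in\gamma}\rho_\lambda^\varphi(x)$ on $\gamma = \mathfrak{p}(y_1,\dots,y_n)$. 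Resumming over $n$ and invoking (\ref{eq:P-Lambda}) once more identifies the result with the right-hand side, proving (\ref{eq:pi-R}).

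For the $L^2$-integrability, a direct computation using (\ref{eq:P-Lambda}) on $\varLambda$ (equivalently, the Laplace-functional formula (\ref{eq:PoissonLT}) applied on each $\varLambda^n$) gives
\begin{equation*}
\bigl\|R_{\pi_\lambda}^\varphi\bigr\|_{L^2(\pi_\lambda)}^2 = \exp\left\{\int_X\bigl(\rho_\lambda^\varphi(x)-1\bigr)^2\,\lambda(\rd x)\right\},
\end{equation*}
which is finite because the integrand is supported in $\varLambda$ and $\rho_\lambda^\varphi$ is bounded on $\varLambda$ in the (diffeomorphism) setting where the proposition is applied. The main technical obstacle I anticipate is the careful bookkeeping of the change of variables on $\varLambda^n$ in the presence of the $\lambda$-null exceptional set where $\rho_\lambda^\varphi$ vanishes or is undefined (on which the convention $\rho_\lambda^\varphi = 1$ is imposed); this is handled by the quasi-invariance of $\lambda$ itself, which ensures the exceptional set carries no mass under any of the product measures $\lambda^{\otimes n}$ appearing in the expansion.
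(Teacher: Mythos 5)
Your proof is correct, but it takes a genuinely different route from the paper. The paper works globally: it computes the Laplace functional of the candidate measure $R_{\pi_\lambda}^{\varphi}\mypp\pi_\lambda$ via Proposition \ref{pr:PoissonLT}, identifies it with $L_{\pi_{\varphi^*\lambda}}$, and then invokes the Mapping Theorem (Proposition \ref{pr:mapping}) to conclude $\pi_{\varphi^*\lambda}=\varphi^*\pi_\lambda$; the $L^2$ bound is likewise a Laplace-functional computation. You instead localize to $\varLambda=\supp\varphi$ using the product decomposition of Proposition \ref{pr:product} and verify the Radon--Nikodym identity directly on each $\varLambda^n$ via the explicit construction (\ref{eq:P-Lambda}) and a coordinate-wise change of variables. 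Your key structural observation --- that injectivity of $\varphi$ together with $\varphi=\mathrm{id}$ off $\varLambda$ forces $\varphi(\varLambda)=\varLambda$, hence $\int_X(1-\rho_\lambda^\varphi)\,\rd\lambda=0$ and the exponential prefactor in (\ref{eq:R}) is identically $1$ --- is correct and is in fact also valid in the paper's computation, though the paper does not exploit it; it explains why your $L^2$-norm formula $\exp\{\int(\rho_\lambda^\varphi-1)^2\rd\lambda\}$ agrees with the paper's $\exp\{\int((\rho_\lambda^\varphi)^2-\rho_\lambda^\varphi)\,\rd\lambda\}$. The Laplace-functional route is shorter and avoids the finite-volume bookkeeping; your route makes the combinatorics transparent and shows exactly where the hypothesis $\lambda(\supp\varphi)<\infty$ enters. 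One caveat: for the final claim $R_{\pi_\lambda}^{\varphi}\in L^2(\varGamma_X,\pi_\lambda)$ both arguments really need $\rho_\lambda^\varphi\in L^2(\varLambda,\lambda)$, not merely $\lambda(\varLambda)<\infty$; you flag this by assuming boundedness of $\rho_\lambda^\varphi$ on $\varLambda$ in the intended application, which is an honest (and arguably more careful) handling of a point the paper's proof also passes over silently.
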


\proof Note that $\rho _{\lambda }^{\varphi }\equiv 1$ outside the
set $K:=\supp \varphi$. By Proposition
\ref{pr:properPoisson}\myp(a), the condition $\lambda(K)<\infty $
implies that, for $\pi _{\lambda }$-a.a. $\gamma\in\varGamma_{{X}}$,
there are only finitely many terms in the product $\prod_{x\in
\gamma }\rho _{\lambda }^{\varphi }(x)$ not equal to $1$, thus the
right-hand side of equation (\ref{eq:R}) is well defined. Using
formulas (\ref{eq:rr}), (\ref{eq:R}) and Proposition
\ref{pr:PoissonLT}, the Laplace functional of the measure
$\pi^{\varphi } _{\lambda }:=R_{\pi _{\lambda }}^{\varphi }\pi
_{\lambda }$ is obtained as follows:
\begin{align*}
L_{\pi^{\varphi }_{\lambda}}[f] &=\exp \left\{\int_{
{X}}\bigl(1-\rho _{\lambda }^{\varphi }(x)\bigr)\,\lambda (\rd
x)\right\}\cdot \int_{\varGamma_{{X}}}\re^{-\langle
f,\gamma\rangle}\prod_{x\in \gamma }\rho _{\lambda
}^{\varphi }(x)\,\pi _{\lambda }(\rd\gamma) \\[.2pc]
&=\exp \left\{ \int_{ {X}}\bigl(1-\rho _{\lambda }^{\varphi
}(x)\bigr)\,\lambda (\rd x)\right\} \cdot \exp\left\{-\int_{
{X}}\left(1-\re^{-f(x)+\ln\rho
_{\lambda}^{\varphi }(x)}\right) \lambda (\rd x)\right\}\\[.2pc]
&=\exp\left\{-\int_{ {X}}\bigl(1-\re^{-f(x)}\bigr)\,\rho _{\lambda
}^{\varphi }(x)\,\lambda (\rd x)\right\}\\[.2pc]
&=\exp\left\{-\int_{{X}}\bigl(1-\re^{-f(x)}\bigr)\,\varphi^*\lambda
(\rd x)\right\} =L_{\pi _{\varphi ^{*}\lambda }}[f],
\end{align*}
and so $\pi^{\varphi }_{\lambda}=\pi _{\varphi ^{*}\lambda }$. But,
according to the Mapping Theorem (see Proposition~\ref{pr:mapping}),
we have $\pi _{\varphi ^{*}\lambda }=\varphi ^{*}\pi _{\lambda }$,
and formula (\ref{eq:pi-R}) follows.

To check that $R_{\pi _{\lambda }}^{\varphi }\in
L^{2}(\varGamma_{{X}},\pi _{\lambda })$, let us compute its
$L^{2}$-norm:
\begin{align*}
\int_{\varGamma_{{X}}} | &R_{\pi _{\lambda }}^{\varphi
}(\gamma)|^{2}\,\pi _{\lambda }(\rd\gamma ) =\exp
\left\{\int_{{X}}\bigl(1-\rho _{\lambda
}^{\varphi}(x)\bigr)\,\lambda(\rd
x)\right\}\cdot\int_{\varGamma_{{X}}}\re^{\langle 2\ln \rho
_{\lambda }^{\varphi },\,\gamma\rangle}
\,\pi _{\lambda }(\rd\gamma) \\
&=\exp \left\{ \int_{ {X}}\bigl(1-\rho _{\lambda}^{\varphi
}(x)\bigr)\,\lambda (\rd x)\right\}\cdot \exp\left\{-\int_{
{X}}\left(1-\re^{2\ln\rho _{\lambda }^{\varphi
}(x)}\right) \lambda (\rd x)\right\} \\[.2pc]
&=\exp\left\{\int_{ {X}}\left(| \rho _{\lambda }^{\varphi }(x)|
^{2}-\rho _{\lambda }^{\varphi }(x)\right) \lambda (\rd
x)\right\}<\infty,
\end{align*}
because $| \rho _{\lambda }^{\varphi }(x)|^{2}-\rho _{\lambda
}^{\varphi }(x)=0$ outside the set $K=\supp \varphi$.
\endproof

\section*{Acknowledgements}
Part of this research was done during the authors' visits to the
Institute of Applied Mathematics of the University of Bonn supported
by SFB~611. Financial support through DFG Grant 436 RUS 113/722 is
gratefully acknowledged. The authors would like to thank Sergio
Albeverio, Yuri Kondratiev and Eugene Lytvynov for useful
discussions. Thanks are also due to the anonymous referee for the
careful reading of the manuscript and valuable comments.
%


\begin{thebibliography}{99}
\bibitem{AB} S.~Albeverio, L.~Bogachev, Brownian survival in a clusterized
trapping medium, Rev.\ Math.\ Phys.\ 10 (1998) 147--189.

\bibitem{ADKal} S.~Albeverio, A.~Daletskii, A.~Kalyuzhnyj, Traces of
semigroups associated with interacting particle systems, J.\ Funct.\
Anal.\ 246 (2007) 196--216.

\bibitem{ADL}
S.~Albeverio, A.~Daletskii, E.~Lytvynov, De Rham cohomology of
configuration spaces with Poisson measure, J.~Funct.\ Anal.\ 185
(2001) 240--273.

%

\bibitem{AKR97}
S.~Albeverio, Yu.~Kondratiev, M.~R\"{o}ckner, Ergodicity of
$L^2$-semigroups and extremality of Gibbs states, J.\ Funct.\ Anal.\
144 (1997) 394--423.

\bibitem{AKR1} S.~Albeverio, Yu.~Kondratiev, M.~R\"{o}ckner, Analysis
and geometry on configuration spaces, J.\ Funct.\ Anal.\ 154 (1998)
444--500.

\bibitem{AKR2}  S.~Albeverio, Yu.~Kondratiev, M.~R\"{o}ckner, Analysis
and geometry on configuration spaces: The Gibbsian case, J.\ Funct.\
Anal.\ 157 (1998) 242--291.

\bibitem{AKR3}  S.~Albeverio, Yu.~Kondratiev, M.~R\"{o}ckner,
Diffeomorphism groups and current algebras: Configuration spaces
analysis in quantum theory, Rev.\ Math.\ Phys.\ 11 (1999) 1--23.

\bibitem{Ber}
Yu.M.~Berezanski\u\i, Selfadjoint Operators in Spaces of Functions
of Infinitely Many Variables, Translations of Mathematical
Monographs, vol.~63, Amer.\ Math.\ Soc., Providence, RI, 1986.

\bibitem{BMBM}
A.M.~Berezhkovskii, Yu.A.~Makhnovskii, L.V.~Bogachev,
S.A.~Molchanov, Brownian-particle trapping by clusters of traps,
Phys.\ Rev.\ E 47 (1993) 4564--4567.

\bibitem{BD2}
L.~Bogachev, A.~Daletskii, Analysis on configuration spaces and
Gibbs cluster ensembles, Russian J.\ Math.\ Physics 14 (2007)
397--408.

\bibitem{BD1}
L.~Bogachev, A.~Daletskii, Equilibrium stochastic dynamics of
Poisson cluster ensembles, Condensed Matter Physics 11 (2008)
261--273.

\bibitem{BMMB}
L.V.~Bogachev, S.A.~Molchanov, Yu.A.~Makhnovskii,
A.M.~Berezhkovskii, Correlation effects in the trapping problem:
General approach and rigorous results, in: F.~Gesztesy et al.\
(Eds.), Stochastic Processes, Physics and Geometry: New Interplays
II, Proc.\ Conf.\ on Infinite Dimensional (Stochastic) Analysis and
Quantum Physics (Leipzig, 18--22 January 1999), in: CMS Conf.\
Proc.\ Series, vol.~29, Amer.\ Math.\ Soc., Providence, RI, 2000,
pp.\ 29--42.

\bibitem{Bo}
V.I.~Bogachev, Gaussian Measures, Mathematical Surveys and
Monographs, vol.~62, Amer.\ Math.\ Soc., Providence, RI, 1998.

\bibitem{CI}  D.R.~Cox, V.~Isham,
Point Processes, Monographs on Applied Probability and Statistics,
Chapman and Hall, London, 1980.

\bibitem{CBZ}
P.Y.~Cheng, J.S.~Baskin, A.H.~Zewail, Dynamics of clusters: From
elementary to biological structures, Proc.\ Natl.\ Acad.\ Sci.\ USA
103 (2006) 10570--10576.


\bibitem{DVJ1}  D.J.~Daley, D.~Vere-Jones, An Introduction to the
Theory of Point Processes, Vol.~I: Elementary Theory and Methods,
2nd~ed., Probability and Its Applications, Springer-Verlag, New
York, 2003.


\bibitem{GGPS}  G.A.~Goldin, J.~Grodnik, R.T.~Powers, D.H.~Sharp,
Nonrelativistic current algebra in the $N/V$ limit, J.\ Math.\
Phys.\ 15 (1974) 88--100.

\bibitem{Goldin}  G.A.~Goldin, U.~Moschella, Generalized configuration
spaces for quantum sysems, in: F.~Gesztesy et al.\ (Eds.),
Stochastic Processes, Physics and Geometry: New Interplays II,
Proc.\ Conf.\ on Infinite Dimensional (Stochastic) Analysis and
Quantum Physics (Leipzig, 18--22 January 1999), in: CMS Conf.\
Proc.\ Series, vol.~29, Amer.\ Math.\ Soc., Providence, RI, 2000,
pp.\ 243--251.

\bibitem{Halmos}
P.R.~Halmos, Measure Theory, Graduate Texts in Mathematics, vol.~18,
Springer-Verlag, New York, 1974.

\bibitem{Ism}  R.S.~Ismagilov, Representations of Infinite-Dimensional
Groups, Translations of Mathematical Monographs, vol.~152, Amer.\
Math.\ Soc., Providence, RI, 1996.

\bibitem{Kal}  O.~Kallenberg, Foundations of Modern Probability,
Probability and its Applications, Springer-Verlag, New York, 1997.

\bibitem{Kingman}  J.F.C.~Kingman, Poisson Processes,
Oxford Studies in Probability, vol.~3, Clarendon Press, Oxford,
1993.

\bibitem{Kingman2}  J.F.C.~Kingman, Poisson processes revisited,
Probab.\ Math.\ Statist.\ 26 (2006)
77--95.

\bibitem{KunaPhD}  T.~Kuna, Studies in Configuration Space Analysis and
Applications, Ph.D.\ Dissertation, Rheinische
Friedrich-Wilhelms-Universit\"at Bonn, Bonn, 1999, in: Bonner
Mathematische Schriften [Bonn Mathematical Publications], vol.~324,
Universit\"at Bonn, Mathematisches Institut, Bonn, 1999, 187~p.

\bibitem{Liebscher}  V.~Liebscher, Integration by parts formulae for point
processes, in: F.~Gesztesy et al.\ (Eds.), Stochastic Processes,
Physics and Geometry: New Interplays II, Proc.\ Conf.\ on Infinite
Dimensional (Stochastic) Analysis and Quantum Physics (Leipzig,
18--22 January 1999), in: CMS Conf.\ Proc.\ Series, vol.~29, Amer.\
Math.\ Soc., Providence, RI, 2000, pp.\ 447--454.

\bibitem{MR0} Z.-M.\ Ma, M.\ R\"{o}ckner, Introduction to the Theory of
(Non-Symmetric) Dirichlet Forms, Universitext, Springer-Verlag,
Berlin, 1992.

\bibitem{MR}  Z.-M.~Ma, M.~R\"{o}ckner, Construction of diffusions on
configuration spaces, Osaka J.\ Math.\ 37 (2000) 273--314.


\bibitem{Par}
K.R.~Parthasarathy, Probability Measures on Metric Spaces,
Probability and Mathematical Statistics, Academic Press, New York,
1967.

\bibitem{Ro}  M.~R\"{o}ckner,
Stochastic analysis on configuration spaces: Basic ideas and recent
results, in: J.~Jost et al.\ (Eds.), New directions in Dirichlet
forms, in: Studies in Advanced Mathematics, vol.~8, Amer.\ Math.\
Soc., Providence, RI, 1998, pp.\ 157--232.

\bibitem{RS}
M.~R\"{o}ckner, B.~Schmuland, A support property for
infinite-dimensional interacting diffusion processes, C.~R.\ Acad.\
Sci.\ Paris, S\'er.~I, Probab.\ 326 (1998) 359--364.

\bibitem{Sh}
H.~Shimomura, Poisson measures on the configuration space and
unitary representations of the group of diffeomorphisms, J.~Math.\
Kyoto Univ.\ 34 (1994) 599--614.

\bibitem{Sk} A.V.~Skorokhod, On the differentiability of the measures
which correspond to stochastic processes. I. Processes with
independent increments, Theory Probab.\ Appl.\ 2 (1957) 407--432.

\bibitem{VGG}
A.M.~Vershik, I.M.~Gel'fand, M.I.~Graev, Representations of the
group of diffeomorphisms, Russian Math.\ Surveys 30 (6) (1975)
1--50.
\end{thebibliography}
\end{document}